\documentclass[12pt,leqno]{amsart}
\usepackage{latexsym,amsmath,amssymb,amscd,xcolor,mathrsfs,marvosym,mathtools}
\usepackage[T1]{fontenc}
\usepackage[utf8]{inputenc}
\usepackage{graphicx,hyperref}
\usepackage[shortlabels]{enumitem}

\setlength{\oddsidemargin}{1pt}
\setlength{\evensidemargin}{1pt}
\setlength{\topmargin}{1pt}       
\setlength{\textheight}{650pt}    
\setlength{\textwidth}{460pt}     

\belowdisplayskip=18pt plus 6pt minus 12pt \abovedisplayskip=18pt
plus 6pt minus 12pt
\parskip 8pt plus 1pt


\newtheorem{theorem}{Theorem}
\newtheorem*{theorem*}{Theorem}
\newtheorem{lemma}[theorem]{Lemma}
\newtheorem{corollary}[theorem]{Corollary}
\newtheorem{proposition}[theorem]{Proposition}

\newtheorem{conjecture}[theorem]{Conjecture}


\theoremstyle{definition}
\newtheorem{remark}[theorem]{Remark}

\newtheorem{example}[theorem]{Example}

\newcommand{\overbarint}{
\rule[.036in]{.12in}{.009in}\kern-.16in \displaystyle\int }

\newcommand{\overbarcal}{\mbox{$ \rule[.036in]{.11in}{.007in}\kern-.128in\int $}}

\newcommand{\bbbn}{\mathbb N}

\newcommand{\bbbr}{\mathbb R}
\newcommand{\eps}{\varepsilon}
\newcommand{\bbbs}{\mathbb S}
\newcommand{\bbbb}{\mathbb B}

\newcommand{\la}{\langle}
\newcommand{\ra}{\rangle}
\newcommand{\we}{\vec{e}}

\def\5{\text{\Saturn}}

\def\diam{\operatorname{diam}}

\def\dist{\operatorname{dist}}

\def\rank{{\rm rank\,}}
\def\id{{\rm id\, }}

\newcommand{\overbar}[1]{\mkern 1.7mu\overline{\mkern-1.7mu#1\mkern-1.5mu}\mkern 1.5mu}

\def\mvint_#1{\mathchoice
          {\mathop{\vrule width 6pt height 3 pt depth -2.5pt
                  \kern -8pt \intop}\nolimits_{\kern -3pt #1}}%
          {\mathop{\vrule width 5pt height 3 pt depth -2.6pt
                  \kern -6pt \intop}\nolimits_{#1}}%
          {\mathop{\vrule width 5pt height 3 pt depth -2.6pt
                  \kern -6pt \intop}\nolimits_{#1}}%
          {\mathop{\vrule width 5pt height 3 pt depth -2.6pt
                  \kern -6pt \intop}\nolimits_{#1}}}


\numberwithin{theorem}{section} \numberwithin{equation}{section}

\title[Approximation of mappings with $\rank Df\leq 1$]{Smooth approximation of mappings with rank of the derivative at most $1$}
\author[P. Goldstein]{Pawe\l{}  Goldstein}
\address{Pawe\l{} Goldstein, Institute of Mathematics, Faculty of Mathematics, Informatics and Mechanics, University of Warsaw, Banacha 2, 02-097 Warsaw, Poland} \email{P.Goldstein@mimuw.edu.pl}
\thanks{P.G. was supported by NCN grant no 2019/35/B/ST1/02030}
\author[P. Haj\l{}asz]{Piotr Haj\l{}asz}
\address{Piotr Hajlasz, Department of Mathematics, University of Pittsburgh, Pittsburgh, PA 15260, USA}
\email{hajlasz@pitt.edu}
\thanks{P.H.\ was supported by NSF grant DMS-2055171}

\subjclass[2020]{Primary: 41A29, 54F50,  57R12, Secondary: 26B05, 53C23, 30L99}
\keywords{smooth approximation with constraints, rank of the derivative, metric trees, analysis on metric spaces}
\begin{document}


\maketitle
\begin{abstract}
It was conjectured that if $f\in C^1(\bbbr^n,\bbbr^n)$ satisfies $\rank Df\leq m<n$ everywhere in $\bbbr^n$, then $f$ can be uniformly approximated by $C^\infty$-mappings $g$ satisfying $\rank Dg\leq m$ everywhere. While in general, there are counterexamples to this conjecture, we prove that the answer is in the positive when $m=1$. More precisely, if $m=1$, our result yields an almost-uniform approximation of locally Lipschitz mappings $f:\Omega\to\bbbr^n$, satisfying $\rank Df\leq 1$ a.e., by $C^\infty$-mappings $g$ with $\rank Dg\leq 1$, provided $\Omega\subset\bbbr^n$ is simply connected. The construction of the approximation employs techniques of analysis on metric spaces, including the theory of metric trees ($\bbbr$-trees).
\end{abstract}
\section{Introduction}

The following conjecture was stated by Jacek Ga\l{}ęski \cite[Conjecture~1.1 and Section~3.3]{galeski}.

\begin{conjecture}
\label{C1}
Let $1\leq m<n$ be integers and let $\Omega\subset\bbbr^n$ be open. If $f\in C^1(\Omega,\bbbr^n)$ satisfies $\rank Df\leq m$ everywhere in $\Omega$, then $f$ can be uniformly approximated by smooth mappings $g\in C^\infty(\Omega,\bbbr^n)$ such that $\rank Dg\leq m$ everywhere in $\Omega$.
\end{conjecture}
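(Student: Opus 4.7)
I will propose a proof only for the case $m=1$, since the paper mentions that Conjecture~\ref{C1} fails in general. The plan rests on the elementary observation that if $u\colon\Omega\to\bbbr$ is a smooth scalar function and $\gamma\colon\bbbr\to\bbbr^n$ is a smooth curve, then the composition $\gamma\circ u$ automatically satisfies $\rank D(\gamma\circ u)\leq 1$. I would therefore try to approximate $f$ by compositions of this form. The immediate obstacle is that $f$ itself does not in general factor globally as $\gamma\circ u$: its image $f(\Omega)$ carries a tree-like rather than linear structure, because connected components of level sets of $f$ can split and merge as one moves through $\Omega$.

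\emph{Step 1: factoring $f$ through an $\bbbr$-tree.} Declare $x\sim y$ in $\Omega$ if $x$ and $y$ can be joined by a continuous path on which $f$ is constant, and let $T=\Omega/{\sim}$, equipped with the quotient pseudometric given, say, by the infimum of the length of $f$ restricted to a connecting path. The rank-one hypothesis forces this pseudometric to be finite and should make it a genuine metric, while the simple connectedness of $\Omega$ rules out nontrivial cycles, so $T$ ought to be an $\bbbr$-tree. The map $f$ then descends to a factorization $f=\hat f\circ\pi$, with $\pi\colon\Omega\to T$ a locally Lipschitz quotient and $\hat f\colon T\to\bbbr^n$ a Lipschitz embedding.

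\emph{Steps 2 and 3: traverse the tree and build a smooth height function.} Choose a finite subtree of $T$ that is $\varepsilon$-dense in the image of $\pi$ and traverse it with a Lipschitz depth-first walk $\alpha\colon[0,L]\to T$ that doubles back on every edge. Compose with $\hat f$ and mollify to obtain a smooth curve $\gamma\colon[0,L]\to\bbbr^n$ whose image Hausdorff-approximates $\hat f(T)$. Next, construct a smooth $u\colon\Omega\to[0,L]$ for which $\gamma(u(x))$ is $\varepsilon$-close to $f(x)$ everywhere; heuristically, $u(x)$ should be a time at which $\alpha$ visits a point close to $\pi(x)$. Build $u$ by taking a dyadic decomposition of $T$ into subtrees of small diameter, choosing a representative visit time on each piece, and gluing the local choices by a smooth partition of unity on $\Omega$ subordinate to the preimages of those pieces under $\pi$. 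Setting $g=\gamma\circ u$ then yields a smooth map with $\rank Dg\leq 1$ that is uniformly $O(\varepsilon)$-close to $f$.

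The substantive difficulty is the construction of $u$. Since $\alpha$ visits each point of $T$ many times, naive local choices of visit time on different parts of $\Omega$ land at unrelated preimages in $[0,L]$, and smoothly interpolating between two such choices sends $\gamma\circ u$ along whatever sub-walk of $\alpha$ lies between them, which can be very far from $\hat f(T)$ in $\bbbr^n$. Branching points of $T$ are especially acute: a genuine branch is incompatible with rank-one smoothness, so at every branching the approximation $\gamma$ must \emph{pass by} rather than actually branch, and the scalar $u$ must be coordinated so that this tight passage does not accumulate error across overlapping partition pieces. This is precisely where the tree metric on $T$ enters: it encodes the combinatorial and quantitative data needed to choose the representative visit times consistently along branchings, so that neighboring choices of $t_i$ differ only by an amount whose $\alpha$-image stays within $\varepsilon$ of $\pi(x)$.
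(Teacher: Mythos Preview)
Your Step~1 is correct and matches the paper's construction of the metric tree $Z_f$ and the factorization $f=\phi\circ\psi$; reducing to a finite subtree via a nearest-point retraction is also what the paper does.

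The genuine gap is in Steps~2--3. Parameterising the finite subtree by a depth-first walk $\alpha\colon[0,L]\to T$ and then trying to find a smooth scalar $u\colon\Omega\to[0,L]$ with $\alpha(u(x))$ close to $\pi(x)$ runs into a topological lifting obstruction at branch points of degree $\geq 3$, and your partition-of-unity sketch does not overcome it. Concretely: suppose $v\in T$ has three incident edges $e_1,e_2,e_3$ and that some connected neighbourhood $N\subset\Omega$ of $\pi^{-1}(v)$ meets $\pi^{-1}(e_i)$ for each $i$. Since $u$ is continuous and $N$ is connected, $u(N)$ is a connected interval, hence lies in \emph{one} of the components of $\alpha^{-1}(\text{$\varepsilon$-neighbourhood of }v)$; say it lies near the visit time between the $e_1$- and $e_2$-excursions of $\alpha$. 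Then, along the part of $\Omega$ mapping into $e_3$, the continuous function $u$ must travel from that visit time across the entire $e_2$-excursion of $\alpha$ before reaching times at which $\alpha$ enters $e_3$. For those intermediate $x$ one has $\alpha(u(x))$ deep inside $e_2$ while $\pi(x)$ lies in $e_3$, so $d_T(\alpha(u(x)),\pi(x))$ is of order $\ell(e_2)$, not $\varepsilon$. No choice of local visit times, and no amount of smoothing of $\gamma$, repairs this: the problem is that a single real parameter cannot continuously resolve a trivalent branching.

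The paper avoids this obstruction by a different device: instead of pushing the tree down to an interval, it embeds the finite tree into $\bbbr^E$ (one coordinate per edge) with the edges along mutually orthogonal axes, so that $w\circ r\circ\psi\colon\Omega\to\bbbr^E$ is an honest Lipschitz map into Euclidean space. One then mollifies this map in the ordinary way to obtain a smooth $g_\varepsilon\colon\Omega\to\bbbr^E$ whose image lies in a $\delta$-tube around $w(T)$; the point is that a smooth retraction $\rho_\varepsilon$ of this tube onto $w(T)$ can be written coordinatewise, $\rho_\varepsilon(t_1,\dots,t_E)=(\xi_1(t_1),\dots,\xi_E(t_E))$, because the orthogonality of the edges decouples the branching. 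The rank condition then comes for free from $\rank D\rho_\varepsilon\leq 1$. In short: your Step~1 is right, but you should replace the one-dimensional walk by a higher-dimensional embedding of the tree, after which the ``substantive difficulty'' you identify simply disappears.
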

One can also formulate a weaker, local version of this conjecture (see \cite{GH1}).
\begin{conjecture}
\label{C2}
Let $1\leq m<n$ be integers and let $\Omega\subset\bbbr^n$ be open. If $f\in C^1(\Omega,\bbbr^n)$ satisfies $\rank Df\leq m$ everywhere in $\Omega$, then for every $x\in\Omega$ there is a neighborhood $\bbbb^n(x,\eps)\subset\Omega$ and a sequence $f_i\in C^\infty(\bbbb^n(x,\eps),\bbbr^n)$ such that $\rank Df_i\leq m$ everywhere in $\bbbb^n(x,\eps)$ and $f_i$ converges uniformly to $f$ on $\bbbb^n(x,\eps)$.
\end{conjecture}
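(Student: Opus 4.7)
Since the abstract indicates that the conjecture fails in general for $m\geq 2$ but holds for $m=1$, I outline a strategy targeted at $m=1$. The guiding idea is that when $\rank Df\leq 1$ everywhere, the coordinates $f_1,\dots,f_n$ share a common foliation by level components; collapsing each connected level component to a point should realise $f$ as a composition
\[
 f=\phi\circ\pi,\qquad \pi\colon\bbbb^n(x,\eps)\to T,\qquad \phi\colon T\to\bbbr^n,
\]
where $T$ is a one-dimensional target, $\pi$ is a continuous quotient, and $\phi$ is a Lipschitz embedding whose image is a $1$-rectifiable subset of $\bbbr^n$.

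The first step is to endow $T$ with the structure of a metric tree ($\bbbr$-tree). Because $f\in C^1$ and $\rank Df\leq 1$, the non-critical set is locally foliated by smooth $(n-1)$-dimensional hypersurfaces on which $f$ is constant, and any coordinate $f_j$ with $\nabla f_j(y)\neq 0$ can serve as a local tree parameter near $y$. Since $\bbbb^n(x,\eps)$ is simply connected, any loop in the would-be tree would bound a disk on which the monotone local parameter would be forced to return to its starting value; hence $T$ has no loops, which is precisely the $\bbbr$-tree property.

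Next I would smooth $\phi$ and $\pi$ separately. Approximate $\phi(T)\subset\bbbr^n$ by a $C^\infty$ curve-tree $T_i\subset\bbbr^n$ obtained by smoothing each edge and rounding each branch point with a small cap, producing $\phi_i\colon T_i\to\bbbr^n$. The quotient $\pi$ is scalar-valued along each edge of $T$, so I would mollify it by convolution against a standard kernel on $\bbbb^n(x,\eps)$, using a branch-adapted partition of unity that is constant in directions transverse to the local one-dimensional foliation. Pasting these local mollifications yields $\pi_i\colon\bbbb^n(x,\eps)\to T_i$, and $g_i=\phi_i\circ\pi_i$ is a $C^\infty$ map with $\rank Dg_i\leq 1$. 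Uniform convergence $g_i\to f$ follows from uniform convergence of $\phi_i\to\phi$ and $\pi_i\to\pi$ together with the Lipschitz bound on $\phi$.

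The main obstacle will be the combinatorics of the branch-point set of $T$. For a generic $C^1$ map with $\rank Df\leq 1$ this set need not be discrete and the critical set of $f$ can be geometrically intricate, so the partition of unity that is transverse to the foliation must be built using metric-measure tools on the $\bbbr$-tree (as advertised in the abstract) rather than smooth geometry inherited from $\bbbr^n$. A secondary difficulty is that the factorization $f=\phi\circ\pi$ is only canonical up to reparameterisation of $T$, so one must choose parameterisations depending continuously on the smoothing scale in order for the limit $g_i\to f$ to be genuinely uniform on all of $\bbbb^n(x,\eps)$.
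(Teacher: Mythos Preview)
Your high-level strategy---factor $f$ through a metric tree and then smooth the two factors separately---is exactly the paper's plan for $m=1$, and your heuristic for why simple connectedness forces the quotient to be a tree is on the right track (the paper makes this precise via the quasimetric $d_f(x,y)=\inf_\gamma \ell(f\circ\gamma)$ and a characterisation of metric trees in terms of non-contractible Lipschitz loops). Where your proposal diverges, and where it has a genuine gap, is in the smoothing step.

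You propose to smooth $\phi$ by replacing $\phi(T)\subset\bbbr^n$ with a ``$C^\infty$ curve-tree'' obtained by rounding branch points, and to smooth $\pi$ by mollifying with a ``branch-adapted partition of unity that is constant in directions transverse to the local one-dimensional foliation.'' Neither of these can be carried out as stated. First, the branch set of $T$ need not be discrete (you flag this yourself), so there is no finite list of caps to insert; second, the foliation you invoke exists only on the open set where $\rank Df=1$, and it is precisely on its complement---which may be topologically wild---that the partition of unity would have to do all the work; third, mollifying a map $\pi$ whose target is a tree makes no sense until the tree is embedded in a linear space, and once embedded, convolution will push the image off the tree, destroying the rank constraint. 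The paper sidesteps all three problems with one idea you are missing: before any smoothing, it \emph{retracts} $Z_f$ onto a \emph{finite} sub-tree $T$ (via a $1$-Lipschitz nearest-point retraction, uniformly close to the identity on the relevant compact set), then embeds this finite $T$ into $\bbbr^E$ with mutually orthogonal edges. Now $w\circ r\circ\psi:\Omega\to\bbbr^E$ can be mollified in the completely standard way; the mollification lands in a tubular neighbourhood of $w(T)$, and a carefully built $C^\infty$ map $\rho_\eps$ from that neighbourhood onto $w(T)$ (essentially a coordinate-wise smooth clamp) restores the image to the tree. The rank-one condition then comes for free from $\rank D\rho_\eps\le 1$ and the chain rule---not from any foliation-aware mollification. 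The finite-tree retraction is the key device that makes the whole scheme work, and it is absent from your outline.
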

These are very natural conjectures and the main difficulty is that standard approximation techniques like the one based on convolution do not preserve the rank of the derivative. It is a highly nonlinear constraint, difficult to deal with.

However, there is an open and  dense subset $G\subset\Omega$, where the rank of the derivative is locally constant. Using the rank theorem on $G$ along with the standard approximation by convolution one easily obtains a partial result in the positive direction \cite[Theorem~3]{GH1}.
\begin{theorem}
Let $1\leq m<n$ be integers and let $\Omega\subset\bbbr^n$ be open. If $f\in C^1(\Omega,\bbbr^n)$ satisfies $\rank Df\leq m$ everywhere in $\Omega$, then there is an open and dense set $G\subset\Omega$ such that for every $x\in G$ there is a neighborhood $\bbbb^n(x,\eps)\subset G$ and a sequence ${f_i\in C^\infty(\bbbb^n(x,\eps),\bbbr^n)}$, such that $\rank Df_i\leq m$ in $\bbbb^n(x,\eps)$ and $f_i$ converges to $f$ uniformly on $\bbbb^n(x,\eps)$.
\end{theorem}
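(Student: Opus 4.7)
The plan is to take $G$ to be the set of points of $\Omega$ at which $\rank Df$ is locally constant, and at each $x_0\in G$ to apply the constant rank theorem to obtain a local factorization $f=g\circ h$ with $h$ mapping into $\bbbr^k$, so that separate mollification of $g$ and $h$ produces smooth approximations that automatically inherit the rank bound.

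First, I would verify $G$ is open and dense. Openness is immediate from the definition. For density, the key observation is that $\rank Df$ is lower semicontinuous with values in $\{0,1,\ldots,m\}$: if some $k\times k$ minor of $Df(x_0)$ is nonzero, it persists in a neighborhood. In any nonempty open $U\subset\Omega$, choose $x_0\in U$ realizing $k^*:=\max_{x\in U}\rank Df(x)$ (attained, since the range is finite); lower semicontinuity gives a neighborhood $V\subset U$ of $x_0$ with $\rank Df\geq k^*$ on $V$, and since $\rank Df\leq k^*$ on $U$, we obtain $\rank Df\equiv k^*$ on $V$, hence $x_0\in G\cap U$.

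Next, for $x_0\in G$, let $V=\bbbb^n(x_0,\eps)\subset G$ be a ball on which $\rank Df\equiv k$ for some $k\leq m$. The constant rank theorem, applicable since $f$ is $C^1$ of constant rank on $V$, produces (after possibly shrinking $\eps$) $C^1$ diffeomorphisms $\phi$ on $V$ and $\psi$ on a neighborhood of $f(x_0)$ such that $\psi\circ f\circ\phi^{-1}(y_1,\ldots,y_n)=(y_1,\ldots,y_k,0,\ldots,0)$. Letting $\pi_k$ denote projection onto the first $k$ coordinates, define
\[
h:=\pi_k\circ\phi:V\to\bbbr^k,\qquad g(z):=\psi^{-1}(z_1,\ldots,z_k,0,\ldots,0)\text{ on }U:=\pi_k(\phi(V))\subset\bbbr^k.
\]
Both $h$ and $g$ are of class $C^1$, and by construction $f|_V=g\circ h$.

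Finally, I would mollify the two factors separately. Fix $V'\Subset V$ and choose an open $U'$ with $h(\overline{V'})\subset U'\Subset U$; then, with standard mollifiers $\rho^{(n)}_\eta$ and $\rho^{(k)}_\eta$ on $\bbbr^n$ and $\bbbr^k$, set $h_\eta:=h*\rho^{(n)}_\eta$ on $V'$ and $g_\eta:=g*\rho^{(k)}_\eta$ on $U'$. For all sufficiently small $\eta$, uniform convergence $h_\eta\to h$ on $V'$ forces $h_\eta(V')\subset U'$, so $f_\eta:=g_\eta\circ h_\eta\in C^\infty(V',\bbbr^n)$ is well defined. The chain rule gives $\rank Df_\eta(x)\leq \rank Dh_\eta(x)\leq k\leq m$, since $h_\eta$ takes values in $\bbbr^k$. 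Uniform convergence $f_\eta\to f$ on $V'$ follows by a standard triangle inequality from $g_\eta\to g$ uniformly on $\overline{U'}$, the uniform continuity of $g$ on $\overline{U'}$, and $h_\eta\to h$ uniformly on $V'$. The only real technicality is the domain bookkeeping in the mollification step, which is harmless once $\eta$ is chosen small; beyond that, the argument is routine given the rank theorem.
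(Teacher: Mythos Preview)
Your proposal is correct and follows exactly the approach the paper indicates: the paper does not give a detailed proof of this theorem but only the one-line sketch ``the rank of the derivative is locally constant on an open dense set $G$, and using the rank theorem on $G$ along with standard approximation by convolution one easily obtains the result,'' citing \cite{GH1}. Your argument is precisely this---$G$ is the locally-constant-rank set, the constant rank theorem gives a local factorization $f=g\circ h$ through $\bbbr^k$, and separate mollification of the factors yields the smooth approximants with the rank bound for free.
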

The problem is, however, caused by the closed and nowhere dense set $\Omega\setminus G$, where the rank of the derivative is not constant and the rank theorem cannot be used. In fact, in \cite{GH1} the authors constructed infinitely many counterexamples to Conjecture~\ref{C2} and hence also to Conjecture~\ref{C1}.
\begin{example}
There is $f\in C^1(\bbbr^5,\bbbr^5)$ with $\rank Df\leq 3$ that cannot be locally and uniformly approximated (in the sense of Conjecture~\ref{C2}) by mappings
$g\in C^2(\bbbr^5,\bbbr^5)$ satisfying $\rank Dg\leq 3$.
\end{example}
\begin{example}
There is $f\in C^1(\bbbr^7,\bbbr^7)$, $\rank Df\leq 4$, that cannot be locally and uniformly approximated (in the sense of Conjecture~\ref{C2}) by mappings
$g\in C^3(\bbbr^7,\bbbr^7)$ satisfying $\rank Dg\leq 4$.
\end{example}
These examples are special cases of a much more general result \cite[Theorem~4]{GH1}, which provides infinitely many similar examples.
\begin{theorem}
\label{T25}
Suppose that $m+1\leq k<2m-1$, $\ell\geq k+1$, $r\geq m+1$, and the homotopy group $\pi_k(\bbbs^m)$ is non-trivial. Then there is a map $f\in C^1(\bbbr^\ell, \bbbr^r)$ with $\rank Df\leq m $ in $\bbbr^\ell$ and a Cantor set $E\subset \bbbr^\ell$ with the following property:
\begin{quote}
For every $x_o\in E$ and $\eps>0$ there is $\delta>0$ such that\\
if ${g\in C^{k-m+1}(\bbbb^\ell(x_o,\eps),\bbbr^r)}$ and
$|f(x)-g(x)|<\delta \text{ for all } x\in\bbbb^\ell(x_o,\eps)$,\\
then $\rank Dg\geq m+1$ on a non-empty open set in $\bbbb^\ell(x_o,\eps)$.
\end{quote}
\end{theorem}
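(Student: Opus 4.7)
The plan is to construct $f$ as a superposition of rescaled copies of a single ``building block'' $F$ carrying a nontrivial element of $\pi_k(\bbbs^m)$, and to derive local non-approximability from Sard's theorem combined with a linking obstruction in $\bbbr^r$.

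\emph{Building block.} Fix a smooth representative $\varphi\colon\bbbs^k\to\bbbs^m$ of a nontrivial homotopy class together with the affine inclusion $\bbbs^m\subset\bbbr^{m+1}\subset\bbbr^r$, which uses $r\geq m+1$. Since $\ell\geq k+1$, I embed a smooth $(k+1)$-disk $D$ inside any prescribed ball of $\bbbr^\ell$, with $\partial D$ identified with $\bbbs^k$. I then build $F\colon\bbbr^\ell\to\bbbr^r$ of class $C^1$, compactly supported near $D$, with $\rank DF\leq m$ everywhere and $F|_{\partial D}=\varphi$; concretely, $F$ is a bump-smoothed radial extension of $\varphi$ into $\bbbr^{m+1}\subset\bbbr^r$, whose image $F(D)$ is $m$-dimensional. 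The constraint $k<2m-1$ gives enough codimension to carry out the $C^1$ parametrization without raising the rank.

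\emph{Local rigidity.} Suppose $g\in C^{k-m+1}(\bbbb^\ell(x_o,\eps),\bbbr^r)$ has $\rank Dg\leq m$ on the full ball and $|g-F|<\delta$ pointwise. Since the smoothness $s=k-m+1$ meets exactly the classical Sard threshold $s\geq (k+1)-r+1$ for a map of domain dimension $k+1$ and rank at most $m$ into $\bbbr^r$ (valid for all $r\geq m+1$), the image $g(D)$ has Lebesgue measure zero in $\bbbr^r$, hence empty interior. Consequently I can choose a linking sphere $L\cong\bbbs^{r-m-1}$ arbitrarily close to $\bbbs^m$ and disjoint from both $F(D)$ and $g(D)$. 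The complement $\bbbr^r\setminus L$ deformation-retracts onto $\bbbs^m$, and the restriction of $g$ to $D$ provides a nullhomotopy of $g|_{\partial D}$ inside $\bbbr^r\setminus L$. Composing with this retraction, together with the $C^0$-nearness of $g|_{\partial D}$ to $\varphi$, forces $[\varphi]=0$ in $\pi_k(\bbbs^m)$, contradicting the choice of $\varphi$. Therefore $\rank Dg\geq m+1$ on a nonempty open subset of $\bbbb^\ell(x_o,\eps)$.

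\emph{Cantor-set assembly.} Fix a Cantor set $E\subset\bbbr^\ell$ and a countable disjoint family of open balls $B_j$ accumulating on $E$, and set $f=\sum_j F_j$, where each $F_j$ is a rescaled translate of the building block placed inside $B_j$. With the scaling factors chosen to decay fast enough, the sum converges in $C^1(\bbbr^\ell,\bbbr^r)$ and $\rank Df\leq m$ everywhere. Every $x_o\in E$ has arbitrarily small neighborhoods containing at least one complete $F_j$, so the rigidity argument applied to that $F_j$ produces the required $\delta=\delta(x_o,\eps)$.

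\emph{Main obstacle.} The delicate interplay of thresholds is the core of the proof: the smoothness $s=k-m+1$ is exactly what forces $g(D)$ to be nowhere dense in $\bbbr^r$, while the dimensional inequalities $k<2m-1$, $\ell\geq k+1$, and $r\geq m+1$ are precisely what permits the linking sphere $L$ to be inserted so that the resulting retraction $\bbbr^r\setminus L\to\bbbs^m$ preserves the class $[\varphi]\in\pi_k(\bbbs^m)$; checking this compatibility carefully is the most subtle step.
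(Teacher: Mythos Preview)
The paper does not itself prove Theorem~\ref{T25}; it is quoted from \cite{GH1}, with only the comment that ``the proof of the theorem involves the methods of algebraic topology.'' Your three-stage outline --- a local model carrying $[\varphi]\neq 0$ in $\pi_k(\bbbs^m)$, a Sard-type obstruction preventing low-rank approximation, and a Cantor-set assembly of rescaled copies --- is indeed the architecture used in \cite{GH1}.

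There is, however, a genuine gap at the core of your building block. A ``bump-smoothed radial extension'' of $\varphi$ does \emph{not} satisfy $\rank DF\leq m$. Writing $F(t\theta)=\eta(t)\,\varphi(\theta)$ for $t\in[0,1]$, $\theta\in\bbbs^k$, the radial derivative $\eta'(t)\varphi(\theta)$ is normal to $\bbbs^m$ at $\varphi(\theta)$ while the tangential part $\eta(t)\,D\varphi(\theta)$ lies in $T_{\varphi(\theta)}\bbbs^m$; hence $\rank DF=1+\rank D\varphi$ wherever $\eta'\neq 0$. A homotopically nontrivial $\varphi:\bbbs^k\to\bbbs^m$ is necessarily onto (otherwise it factors through a contractible $\bbbs^m\setminus\{pt\}$), so $\rank D\varphi=m$ on a nonempty open set, and there $\rank DF=m+1$. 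Producing a $C^1$ map $F:\bbbb^{k+1}\to\bbbr^{m+1}$ with $\rank DF\leq m$ everywhere whose boundary restriction is homotopically nontrivial is exactly the hard analytic content of \cite{GH1}; it cannot be written down by a closed formula and requires an iterated, Kaufman-type construction (cf.\ also \cite{GHP}). Your sentence ``the constraint $k<2m-1$ gives enough codimension to carry out the $C^1$ parametrization without raising the rank'' does not address this at all.

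A smaller imprecision in the obstruction step: for a round sphere $L\cong\bbbs^{r-m-1}$, the complement $\bbbr^r\setminus L$ is \emph{not} homotopy equivalent to $\bbbs^m$ (already $\bbbr^3\setminus\bbbs^1\simeq\bbbs^1\vee\bbbs^2$). The clean version is to project $\pi:\bbbr^r\to\bbbr^{m+1}$; then $\pi\circ g:D^{k+1}\to\bbbr^{m+1}$ has rank $\leq m$ everywhere and class $C^{k-m+1}=C^{(k+1)-(m+1)+1}$, so classical Sard yields a point $p$ near $0$ missed by $(\pi\circ g)(D)$. Now $\bbbr^{m+1}\setminus\{p\}$ genuinely retracts onto $\bbbs^m$, and the rest of your contradiction goes through.
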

The proof of the theorem involves the methods of algebraic topology. 

In the same paper \cite[Conjecture~6]{GH1} the authors conjectured that if $m=1$, then in fact the uniform approximation is possible. The main result of the paper answers the conjecture in the positive: 
\begin{theorem}
\label{T26}
Assume $\Omega\subset\bbbr^n$ is a simply connected domain. 
If $f:\Omega\to\bbbr^m$ is a locally Lipschitz map satisfying $\rank Df\leq 1$ a.e. in $\Omega$, then there exist $C^\infty$-smooth maps $f_i:\Omega\to\bbbr^m$ with $\rank Df_i\leq 1$ in $\Omega$, such that for any compact $K\subset \Omega$ the functions $f_i$ converge uniformly to $f$ on $K$, as $i\to\infty$.
\end{theorem}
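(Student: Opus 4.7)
The mention of metric trees in the abstract, combined with the hypothesis $\rank Df\leq 1$, strongly suggests factoring $f$ through a one-dimensional metric object (an $\bbbr$-tree) and smoothing at the tree level. On $\Omega$ I would declare $x\sim y$ if $f$ is constant along some continuous path from $x$ to $y$, and endow the quotient $T=\Omega/{\sim}$ with the pseudometric $d_T([x],[y])$ equal to the infimum of the length of $f\circ\gamma$ over rectifiable curves $\gamma$ in $\Omega$ joining $x$ to $y$. The rank bound should force $d_T([x],[y])=|f(x)-f(y)|$ on ``linear'' pieces, and simple connectedness of $\Omega$ forces $\pi_1(T)$ to be trivial, which together with the length structure should make $T$ an $\bbbr$-tree. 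This yields a factorization $f=\phi\circ\pi$ with $\pi:\Omega\to T$ locally Lipschitz and $\phi:T\to\bbbr^m$ an isometric embedding for $d_T$.

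For a fixed compact $K\subset\Omega$ and $\eps>0$, I would approximate $T\supset\pi(K)$ by a finite subtree $T_i$ (a finite union of geodesic segments) via a $1$-Lipschitz retraction $r_i:T\to T_i$ satisfying $d_T(r_i(t),t)<\eps$ on $\pi(K)$. Then $\phi\circ r_i\circ\pi$ is uniformly $\eps$-close to $f$ on $K$ and takes values in the polygonal set $\phi(T_i)$. To smooth it, set $N_i=(r_i\circ\pi)^{-1}(\text{branch vertices of }T_i)$, choose nested open neighborhoods $V_i\Subset U_i$ of $N_i$, and define $g_i$ to equal $\phi$ of the corresponding branch vertex on each connected component of $V_i$ (so $\rank Dg_i=0$ there); on each connected component of $U_i\setminus V_i$ --- which lies in the preimage of a single edge --- interpolate smoothly via a scalar $C^\infty$ cutoff, keeping $\rank Dg_i\leq 1$. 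Outside $U_i$, each component of $(r_i\circ\pi)^{-1}(\text{edge})$ maps into a single smooth arc, and the scalar parameter can be convolution-smoothed without breaking the rank bound.

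The main obstacle lies in the factorization step: proving rigorously that $T$ is an $\bbbr$-tree and that $\pi$ has the regularity needed for the smoothing. The map $f$ is merely locally Lipschitz with $\rank Df\leq 1$ only almost everywhere, so level sets and equivalence classes can be irregular and must be analyzed with tools from analysis on metric spaces. A secondary difficulty is choosing the sets $N_i$ and the smoothing collars so that the local pieces paste into a single globally $C^\infty$ map $g_i$ on $\Omega$; simple connectedness is essential here to rule out global obstructions to the gluing, in contrast to the higher-rank counterexamples of Theorem~\ref{T25}.
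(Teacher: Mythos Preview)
Your factorization through a metric tree and the reduction to a finite subtree via a $1$-Lipschitz retraction match the paper's approach precisely. The gap is in the smoothing step, and it originates in a wrong claim: $\phi:T\to\bbbr^m$ is \emph{not} an isometric embedding, only $1$-Lipschitz. Distinct branches of $T$ can map onto the same curve in $\bbbr^m$, and the image $\phi(\eta_k)$ of a single edge is a merely rectifiable (not smooth, not polygonal) curve. Hence even after you convolution-smooth the scalar parameter $s:\Omega\to[0,\lambda_k]$ on an edge-preimage, the composition $(\phi|_{\eta_k})\circ s$ is still only Lipschitz, not $C^\infty$. You would also need to smooth $\phi$, but there is no obvious way to mollify a map defined on a tree. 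Independently, your piecewise construction (constant near vertex preimages, cutoff interpolation on collars, convolution on edge-preimages) does not obviously produce a globally $C^\infty$ map: the preimages $(r_i\circ\pi)^{-1}(\eta_k)$ can be topologically wild closed sets, and matching the separately mollified pieces smoothly across $\partial U_i$ is delicate.

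The paper's key idea, which you are missing, bypasses all gluing. Embed the finite tree $T_i$ into $\bbbr^E$ via $w$, one orthogonal coordinate axis per edge; mollify the locally Lipschitz map $w\circ r_i\circ\psi:\Omega\to\bbbr^E$ \emph{globally} by ordinary convolution to get a smooth $g_\eps$ whose image lies in a thin tube $V_\delta$ around $w(T_i)$; then post-compose with a smooth map $\rho_\eps:\bbbr^E\to\bbbr^E$, built coordinatewise from one-variable functions, that sends $V_\delta$ onto $w(T_i)$ and therefore has $\rank D\rho_\eps\leq 1$ automatically. Finally extend $\phi\circ w^{-1}:w(T_i)\to\bbbr^m$ to a Lipschitz map $\tilde\phi:\bbbr^E\to\bbbr^m$ and smooth it to $\phi_\eps$. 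Then $f_\eps=\phi_\eps\circ\rho_\eps\circ g_\eps$ is $C^\infty$ with $\rank Df_\eps\leq 1$ by the chain rule, with no local patching required.
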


An immediate consequence addresses Conjecture \ref{C2}:
\begin{corollary}
Let $\Omega\subset\bbbr^n$ be open. If $f:\Omega\to\bbbr^m$ is a locally Lipschitz map satisfying $\rank Df\leq 1$ a.e. in $\Omega$, then for every $x\in\Omega$ there is a neighborhood $\bbbb^n(x,\eps)\subset\Omega$ and $C^\infty$-smooth maps $f_i:\bbbb^n(x,\eps)\to \bbbr^n$, such that $\rank Df_i\leq 1$ everywhere in $\bbbb^n(x,\eps)$, and the functions $f_i$~converge uniformly to $f$ on $\bbbb^n(x,\eps)$, as $i\to\infty$.
\end{corollary}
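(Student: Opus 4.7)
The plan is to deduce this local statement as an immediate consequence of Theorem \ref{T26}. Given any $x\in\Omega$, openness of $\Omega$ lets us pick $\eps>0$ so that the closed ball $\overline{\bbbb^n(x,\eps)}$ is contained in the larger open ball $\bbbb^n(x,2\eps)\subset\Omega$. Since open balls in $\bbbr^n$ are convex and therefore simply connected, $\bbbb^n(x,2\eps)$ qualifies as a simply connected domain, and hence satisfies the hypothesis of Theorem \ref{T26}.

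Next, I would apply Theorem \ref{T26} to the restriction $f|_{\bbbb^n(x,2\eps)}$, which clearly inherits the local Lipschitz property and the a.e. rank-one condition from $f$. This yields a sequence of $C^\infty$-smooth maps $g_i:\bbbb^n(x,2\eps)\to\bbbr^m$ with $\rank Dg_i\leq 1$ everywhere in $\bbbb^n(x,2\eps)$, converging uniformly to $f$ on every compact subset of $\bbbb^n(x,2\eps)$. Taking the compact set $K:=\overline{\bbbb^n(x,\eps)}\subset\bbbb^n(x,2\eps)$, the restrictions $f_i:=g_i|_{\bbbb^n(x,\eps)}$ are smooth, satisfy $\rank Df_i\leq 1$ throughout $\bbbb^n(x,\eps)$, and converge uniformly to $f$ on $\bbbb^n(x,\eps)$, exactly as the corollary asserts.

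I do not expect any real obstacle: all the substantive work, including the use of simple connectivity and the machinery of metric trees alluded to in the abstract, is absorbed into the proof of Theorem \ref{T26}. The corollary is simply the observation that every point of an open set lies in a convex (hence simply connected) ball compactly contained in a slightly larger ball of the same type, so the global approximation theorem descends to arbitrary open $\Omega$ at the price of localization.
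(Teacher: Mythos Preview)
Your proposal is correct and matches the paper's approach exactly: the paper states the corollary as ``an immediate consequence'' of Theorem~\ref{T26} without further argument, and your reduction via a convex (hence simply connected) ball compactly contained in a slightly larger ball inside $\Omega$ is precisely the intended one-line deduction.
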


While Theorem \ref{T26} has a purely Euclidean statement, it is interesting to note that the proof presented in the paper is based mostly on quite abstract techniques of analysis on metric spaces. However, since the result should be of interest to researchers that are not familiar with analysis on metric spaces, the proofs presented in the paper are self-contained.

A very rough idea of the proof is as follows. 

Assume first that $f\in C^1$.
The set $G\subset\Omega$ where $\rank Df=1$ is open. According to the rank theorem a neighborhood of every point $x\in G$ is mapped onto a $C^1$ curve. However, at points of the closed set where $\rank Df=0$, the $C^1$ curve may branch into infinitely many $C^1$ curves. Thus a rough intuition is that a mapping $f\in C^1$ satisfying the rank condition $\rank Df\leq 1$ is a $C^1$ curve with possibly infinitely many branching points. The situation is very complicated from the topological point of view, and the best way to approach it is to construct an abstract metric space $Z_f$ (a metric tree) that is a representation of the branching of the map.  Roughly speaking, edges of $Z_f$ correspond to $C^1$ curves in the image of $G$.
It is possible to construct $Z_f$ in such a way that the map $f$ factors through~$Z_f$, 
$$
\Omega\xrightarrow{\psi}Z_f\xrightarrow{\phi}\bbbr^m, \quad f=\phi\circ\psi,
$$
where $\psi$ is locally Lipschitz and $\phi$ is Lipschitz. 
In fact, the construction of the metric tree $Z_f$ along with the factorization $f=\phi\circ\psi$ is possible if $f$ is Lipschitz and such that $\rank Df\leq 1$ almost everywhere.

The construction of the space $Z_f$ is quite abstract and the proof that $Z_f$ is a metric tree requires intricate arguments from analysis on metric spaces.

Then, we construct a retraction $r:Z_f\to T$ onto a finite sub-tree, which is uniformly close to the identity map $\id:Z_f\to Z_f$, so that $\phi\circ r\circ\psi$ is an approximation of $f=\phi\circ\psi$.

We can assume that the finite tree $T$ is embedded into $\bbbr^E$, where $E$ is the number of edges in $T$.
Next, we approximate $r\circ\psi:\Omega\to T\subset\bbbr^E$ by $C^\infty$-mappings
$g_i:\Omega\to T\subset\bbbr^E$, $g_i\in C^\infty(\Omega,\bbbr^E)$, so $\phi\circ g_i$ is close to $\phi\circ r\circ\psi$ and hence close to $f=\phi\circ\psi$.

Clearly, the mappings $g_i$ satisfy $\rank Dg_i\leq 1$, because their images lie in the finite tree~$T$.

We can extend $\phi|_T:T\to\bbbr^m$ to a Lipschitz map $\tilde{\phi}:\bbbr^E\to\bbbr^m$ and we can approximate $\tilde{\phi}$ by $C^\infty$-mappings $\phi_i\in C^\infty(\bbbr^E,\bbbr^m)$.

Finally, $\phi_i\circ g_i:\Omega\to\bbbr^m$ is $C^\infty$-smooth, $\rank D(\phi_i\circ g_i)\leq 1$ by the chain rule, and $\phi_i\circ g_i$ is close to $\tilde{\phi}\circ g_i=\phi\circ g_i$, which is close to $f$.

The method of factorization through metric trees used in the proof of Theorem~\ref{T26} is very different and completely unrelated to the methods of algebraic topology used in the proof of Theorem~\ref{T25}. However, quite surprisingly, both techniques have originally been used in \cite{WengerY} as tools for study of Lipschitz homotopy groups of the Heisenberg group, a problem that seems completely unrelated to Theorems~\ref{T25} and~\ref{T26}. These techniques were elaborated in \cite{DavidS,EsmayliH,GH1,GHP,Zust}.
In particular, we use results from \cite{EsmayliH}, and some of the approximation techniques from \cite{GHP}.
See also \cite{LW,PS} for related constructions.

In fact, the idea of factorization of real valued functions through graphs and trees already appeared in the work of Kronrod \cite{kronrod} and Reeb \cite{reeb}. 
This construction known as the Kronrod-Reeb graph or Reeb graph, has many applications in computational geometry and in topological data analysis, see  e.g., \cite{EH,MO} and references therein.

The paper is structured as follows. Sections~\ref{S1}, \ref{S5}, and~\ref{S2} are devoted to auxiliary material needed in the proof of Theorem~\ref{T26}. In Section~\ref{S1} we discuss some topics on analysis in metric spaces, including the theory of metric trees. In Section~\ref{S5} we discuss some results about the rank of the derivative of Lipschitz maps, and in Section~\ref{S2} we discuss factorization of Lipschitz maps through metric trees. The final Section~\ref{S3} is entirely devoted to the proof of Theorem~\ref{T26}.

\subsection*{Notation}
The notation in the paper is rather standard, but we list here the conventions used for the convenience of the reader.

A \emph{domain} in $\bbbr^n$ is an open and connected subset of $\bbbr^n$. By $\bbbb^n(x,r)$ we denote the Euclidean open ball centered at $x$, of radius $r$. If no center or radius is specified, $\bbbb^n$ denotes the unit ball in $\bbbr^n$. Similarly, $\bbbs^n$ denotes the unit $n$-dimensional sphere.

The Euclidean norm in $\bbbr^k$ is denoted by $|\cdot |$ (for any $k$) and for $x,y\in\bbbr^k$ we denote by $x\cdot y$ the Euclidean scalar product of $x$ and $y$. If $A$ is a linear map from $\bbbr^k$ to $\bbbr^m$, its \emph{operator norm} is defined as $\|A\|=\sup\{|Ax|~~:~~|x|\leq 1\}$.

If $X$ is a subset of a metric space, then by $\overbar{X}$ we denote the closure of $X$, and for any $\delta>0$  the $\delta$-neighborhood of $X$ is the set of points at distance less than $\delta$ from $X$.

For a given $\eps>0$, a subset $X$ of a metric space $Y$ is an \emph{$\eps$-net}, if $\dist(y,X)<\eps$ for every $y\in Y$. If $X$ is compact, then for any $\eps>0$ there exists a \emph{finite} $\eps$-net in $X$.

\section{Metric spaces}
\label{S1}
In this section we briefly recall some facts from the theory of metric spaces. 
While the material is mostly well known, some of the results (Proposition~\ref{T23} and Theorem~\ref{T28}) seem new. 

\subsection{Rectifiable curves}
For more details see e.g., \cite[Section 3]{H2003}.

Let $(X,d)$ be a metric space. By a curve in $X$ we mean a continuous map $\gamma:[a,b]\to X$. The \emph{length} of $\gamma$ is defined as
$
\ell(\gamma)=\sup\sum_{i=0}^{n-1} d(\gamma(t_i),\gamma(t_{i+1})),
$
where the supremum is taken over all partitions $a=t_0<t_1<\ldots<t_n=b$.

A curve is \emph{rectifiable} if $\ell(\gamma)<\infty$. Clearly, if $\gamma(a)=x$ and $\gamma(b)=y$, then $d(x,y)\leq \ell(\gamma)$.

Reparameterization of a curve does not change its length and every rectifiable curve can be reparameterized in such a way that the reparameterized curve $\hat{\gamma}:[0,\ell(\gamma)]\to X$ satisfies $\ell(\hat{\gamma}|_{[s,t]})=t-s$ for all $0\leq t<s\leq b$. In particular, $\hat{\gamma}$ is $1$-Lipschitz. Such an orientation preserving reparameterization is unique and we say that $\hat{\gamma}$ is parameterized by \emph{arc-length}, see e.g., \cite[Theorem 3.2]{H2003}.

For us, the only important consequence of this fact is that when studying rectifiable curves in $X$, we may restrict ourselves to the class of Lipschitz continuous curves. 

We say that a metric space is \emph{proper} if bounded and closed sets are compact. 
\begin{lemma}
\label{T1}
If a metric space $X$ is proper and there is a rectifiable curve connecting $x,y\in X$, then there is a shortest curve connecting $x$ and $y$.
\end{lemma}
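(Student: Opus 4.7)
The plan is to execute the standard Arzelà--Ascoli argument that produces a length-minimizing curve in a proper space, tailored to the setup of the excerpt where curves are already normalized to be Lipschitz.

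First, I would set $L=\inf\ell(\gamma)$ where the infimum ranges over all rectifiable curves from $x$ to $y$; this is finite and nonnegative by assumption. Choose a minimizing sequence $\gamma_n$ with $\ell(\gamma_n)\to L$. Using the arc-length reparametrization recalled just above the lemma, I may rescale each $\gamma_n$ to a map $\hat\gamma_n:[0,1]\to X$ which is $\ell(\gamma_n)$-Lipschitz, joins $x$ to $y$, and has the same length as $\gamma_n$. Then the family $\{\hat\gamma_n\}$ is uniformly Lipschitz with constant bounded by some $M<\infty$, hence equicontinuous.

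Second, I would observe that all images lie in the closed ball $\overbar{B}(x,M)$, which is compact by properness of $X$. Thus $\{\hat\gamma_n(t)\}$ is relatively compact for each $t\in[0,1]$, and the Arzelà--Ascoli theorem for maps into a compact metric space produces a subsequence (still denoted $\hat\gamma_n$) converging uniformly to some continuous curve $\gamma:[0,1]\to X$. Uniform convergence preserves the endpoints, so $\gamma(0)=x$ and $\gamma(1)=y$.

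The key remaining step is lower semicontinuity of length under uniform convergence: for any partition $0=t_0<t_1<\dots<t_k=1$,
$$
\sum_{i=0}^{k-1} d\bigl(\gamma(t_i),\gamma(t_{i+1})\bigr)
=\lim_{n\to\infty}\sum_{i=0}^{k-1} d\bigl(\hat\gamma_n(t_i),\hat\gamma_n(t_{i+1})\bigr)
\leq \liminf_{n\to\infty}\ell(\hat\gamma_n)=L,
$$
by continuity of the distance function and the definition of length. Taking the supremum over partitions gives $\ell(\gamma)\leq L$. Since $\gamma$ is a competitor in the infimum defining $L$, we also have $\ell(\gamma)\geq L$, so $\ell(\gamma)=L$ and $\gamma$ is a shortest curve from $x$ to $y$.

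The only step that requires care is the lower semicontinuity inequality; everything else is a routine application of Arzelà--Ascoli combined with properness to ensure compactness of the target region. I do not expect any serious obstacle beyond writing these steps cleanly.
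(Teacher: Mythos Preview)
Your proof is correct and is exactly the argument the paper has in mind: the paper does not give a detailed proof but simply states that the lemma ``is a simple consequence of the Arzel\`a--Ascoli theorem and the lower semicontinuity of the length with respect to the uniform convergence of curves,'' referring to \cite[Theorem~3.9]{H2003}. Your write-up carries out precisely this standard argument.
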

For a proof, see e.g., \cite[Theorem 3.9]{H2003}. This is a simple consequence of the Arzel\`a-Ascoli theorem and the lower semicontinuity of the length with respect to the uniform convergence of curves.

A shortest curve connecting $x$ to $y$ is called a \emph{geodesic}. Geodesics are not necessarily unique. 
\begin{lemma}
\label{T14}
Any geodesic is one-to-one.
\end{lemma}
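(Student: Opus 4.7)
The plan is a straightforward proof by contradiction, exploiting that any ``loop'' in a geodesic can be excised to produce a strictly shorter curve joining the same endpoints.

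First I would reduce to the arc-length parametrization discussed just before the lemma. So let $\gamma:[0,L]\to X$ be a geodesic from $x$ to $y$, where $L=\ell(\gamma)=d(x,y)$, and assume $\ell(\gamma|_{[s,t]})=t-s$ for all $0\leq s<t\leq L$. Suppose, toward a contradiction, that $\gamma$ is not one-to-one: there exist $0\leq s<t\leq L$ with $\gamma(s)=\gamma(t)$.

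Next I would construct a shorter competitor $\tilde\gamma:[0,L-(t-s)]\to X$ by deleting the loop between times $s$ and $t$. Explicitly,
\[
\tilde\gamma(\tau)=\begin{cases}\gamma(\tau)&\text{if }0\leq\tau\leq s,\\ \gamma(\tau+(t-s))&\text{if }s\leq\tau\leq L-(t-s).\end{cases}
\]
At $\tau=s$ both formulas give $\gamma(s)=\gamma(t)$, so $\tilde\gamma$ is a well-defined continuous curve from $x=\tilde\gamma(0)$ to $y=\tilde\gamma(L-(t-s))$.

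Finally I would estimate the length: subadditivity of length on concatenations, combined with the arc-length parametrization, gives
\[
\ell(\tilde\gamma)\leq \ell(\gamma|_{[0,s]})+\ell(\gamma|_{[t,L]})=s+(L-t)=L-(t-s)<L.
\]
Since $d(x,y)\leq \ell(\tilde\gamma)<L=d(x,y)$, we reach a contradiction. Hence $\gamma$ must be one-to-one.

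There is no real obstacle here; the only mild care is in verifying continuity of the concatenation at the gluing point (immediate from $\gamma(s)=\gamma(t)$) and in applying the standard inequality $d(x,y)\leq\ell(\tilde\gamma)$ together with the length-sum inequality across the concatenation.
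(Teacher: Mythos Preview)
Your proof is correct and is precisely the argument the paper has in mind: the paper's own proof is the single sentence ``Otherwise we could make it shorter by removing `loops','' and you have simply written out the details of that loop-removal carefully.
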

\begin{proof}
Otherwise we could make it shorter by removing `loops'. 
\end{proof}

We say that $X$ is a {\em length space} if for any $x,y\in X$, $d(x,y)$ equals the infimum of lengths of curves connecting $x$ to $y$. A space is {\em geodesic} if for any $x,y\in X$, there is a curve $\gamma$ connecting $x$ to $y$ such that $d(x,y)=\ell(\gamma)$. 

Clearly, every geodesic space is a length space. However, $\bbbr^n\setminus \{ 0\}$ is a length space, but not a geodesic one. It is also locally compact, but not proper. The next result is an immediate consequence of Lemma~\ref{T1}.
\begin{lemma}
\label{T2}
If $X$ is a proper length space, then $X$ is geodesic.
\end{lemma}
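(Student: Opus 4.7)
The plan is to combine the length space property (which guarantees the existence of rectifiable connecting curves) with Lemma~\ref{T1} (which, assuming properness, upgrades the existence of a rectifiable curve to the existence of a shortest one), and then verify that the shortest curve so obtained actually realizes the distance $d(x,y)$.

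More concretely, fix two points $x,y\in X$. Since $X$ is a length space, the infimum of lengths of curves from $x$ to $y$ is finite and equals $d(x,y)$, so there exists a sequence of curves $\gamma_n$ joining $x$ and $y$ with $\ell(\gamma_n)\to d(x,y)$. For all sufficiently large $n$ we have $\ell(\gamma_n)\le d(x,y)+1<\infty$, so in particular at least one rectifiable curve connecting $x$ and $y$ exists. This is the input required by Lemma~\ref{T1}: since $X$ is proper and there is a rectifiable curve from $x$ to $y$, Lemma~\ref{T1} yields a shortest curve $\gamma$ connecting $x$ and $y$.

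It then remains to check that $\ell(\gamma)=d(x,y)$. The inequality $\ell(\gamma)\ge d(x,y)$ is immediate from the general bound $d(x,y)\le\ell(\eta)$ valid for any curve $\eta$ from $x$ to $y$. For the reverse inequality, since $\gamma$ is shortest among rectifiable curves joining $x$ and $y$, we have $\ell(\gamma)\le\ell(\gamma_n)$ for every sufficiently large $n$ (non-rectifiable curves are irrelevant because their length is infinite), and passing to the limit in $n$ gives $\ell(\gamma)\le d(x,y)$. Hence $\ell(\gamma)=d(x,y)$, which shows $X$ is geodesic.

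There is no real obstacle here: the work has already been done in Lemma~\ref{T1}, whose proof via Arzel\`a--Ascoli and lower semicontinuity of length uses properness in an essential way. The only nontrivial ingredient to notice is that the length-space hypothesis is needed precisely to produce the initial rectifiable connecting curve; without it, distinct points could lie in different path components (or in the same path component but joined only by curves of infinite length), and Lemma~\ref{T1} would not be applicable.
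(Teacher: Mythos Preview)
Your argument is correct and is exactly the approach the paper indicates: the paper simply states that Lemma~\ref{T2} is ``an immediate consequence of Lemma~\ref{T1}'', and your write-up spells out precisely that deduction (length space gives a rectifiable connecting curve, properness plus Lemma~\ref{T1} gives a shortest one, and comparison with an approximating sequence shows its length equals $d(x,y)$).
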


An {\em arc} is a metric space homeomorphic to the interval $[0,1]$. 

If $\Gamma$ is an arc and $\gamma_i:[a_i,b_i]\to \Gamma$, $i=1,2$, are homeomorphisms, then $\ell(\gamma_1)=\ell(\gamma_2)$ and we define $\ell(\Gamma)$ as the length of any of its homeomorphic parameterizations. We say that an arc $\Gamma$ is {\em rectifiable} if $\ell(\Gamma)<\infty$.

It follows from Lemma~\ref{T14} that the image of a geodesic is a rectifiable arc.

\begin{lemma}
\label{T16}
If points $x,y\in X$, $x\neq y$, can be connected by a rectifiable curve $\gamma$ and if $\Gamma$ is the image of the curve, then there is a rectifiable arc inside $\Gamma$ with endpoints $x$ and $y$, of length less than or equal $\ell(\gamma)$.
\end{lemma}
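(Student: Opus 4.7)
The plan is to perform a \emph{minimal loop removal} on $\gamma$ via Zorn's lemma and then read off the arc. Reparametrize by arc length to get a $1$-Lipschitz surjection $\hat\gamma:[0,L]\to\Gamma$ with $L=\ell(\gamma)$, $\hat\gamma(0)=x$, $\hat\gamma(L)=y$. Let $\mathcal E$ denote the family of closed subsets $E\subset[0,L]$ containing $0$ and $L$ and satisfying the \emph{loop condition}: $\hat\gamma(a)=\hat\gamma(b)$ for every connected component $(a,b)$ of $[0,L]\setminus E$. This family is nonempty (it contains $[0,L]$), and I order it by inclusion and apply Zorn's lemma to produce a minimal element $E^*$.

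The key input to Zorn is that the intersection $E=\bigcap_\alpha E_\alpha$ of any decreasing chain in $\mathcal E$ again lies in $\mathcal E$. Given a component $(a,b)$ of $[0,L]\setminus E$ and any small $\eps>0$, the nested compacts $[a+\eps,b-\eps]\cap E_\alpha$ have empty intersection (since $(a,b)\cap E=\emptyset$), so by the finite intersection property this set is empty for some $\alpha$; for such $\alpha$ the complement component $(s_\alpha,t_\alpha)$ of $[0,L]\setminus E_\alpha$ containing the midpoint of $(a,b)$ satisfies $s_\alpha\in[a,a+\eps]$ and $t_\alpha\in[b-\eps,b]$. Letting $\eps\to 0$ produces sequences with $s_\alpha\to a$, $t_\alpha\to b$, and continuity of $\hat\gamma$ combined with the loop condition $\hat\gamma(s_\alpha)=\hat\gamma(t_\alpha)$ yields $\hat\gamma(a)=\hat\gamma(b)$, verifying $E\in\mathcal E$.

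For minimal $E^*$, two structural consequences follow by contraposition: (a) $E^*$ has no isolated points other than possibly $0,L$ (else such a point could be dropped from $E^*$); and (b) for $s<t$ in $E^*$ with $\hat\gamma(s)=\hat\gamma(t)$ one has $(s,t)\cap E^*=\emptyset$ (else $E^*\setminus(s,t)$ is a strictly smaller element of $\mathcal E$). Form the quotient $E^*/{\sim}$ identifying the two endpoints of each complement component. By (b), $\hat\gamma$ descends to a continuous \emph{injection} $E^*/{\sim}\to\Gamma$ with image $\hat\gamma(E^*)$; since the gaps are pairwise disjoint their lengths tend to zero, so the equivalence relation is closed and the quotient is Hausdorff. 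Using the retraction $\Phi(s)=\max\{u\in E^*:u\le s\}$, the composition $q\circ\Phi:[0,L]\to E^*/{\sim}$ is continuous and surjective (the left jumps of $\Phi$ at right endpoints of gaps are collapsed by $q$), so $E^*/{\sim}$ is compact, connected, and metrizable. Being linearly ordered with at least two points (since $x\neq y$), it is homeomorphic to $[0,1]$, and $\hat\gamma(E^*)$ is an arc in $\Gamma$ from $x$ to $y$.

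Finally, any partition of this arc corresponds to an increasing tuple $s_0<\cdots<s_n$ in $E^*\subset[0,L]$, and the $1$-Lipschitz property of $\hat\gamma$ gives $\sum_j d(\hat\gamma(s_j),\hat\gamma(s_{j+1}))\le s_n-s_0\le L=\ell(\gamma)$, so the arc has length at most $\ell(\gamma)$. I expect the chain-intersection step to be the main obstacle: the complement components of $E_\alpha$ can merge and shrink in intricate ways along the chain, and one needs precisely the right compactness input to push the loop condition through to the limit. The subsequent structural analysis of $E^*/{\sim}$ and the length estimate are essentially bookkeeping once the correct family $\mathcal E$ has been identified.
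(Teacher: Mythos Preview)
Your argument is correct, but it takes a very different route from the paper. The paper's proof is two lines: since $\Gamma$ is compact (hence proper) and $\gamma$ itself is a rectifiable curve in $\Gamma$ from $x$ to $y$, Lemma~\ref{T1} (Arzel\`a--Ascoli plus lower semicontinuity of length) produces a \emph{shortest} curve in $\Gamma$ between $x$ and $y$; by Lemma~\ref{T14} this geodesic is injective, so its image is the desired arc, and its length is at most $\ell(\gamma)$ because $\gamma$ is a competitor.

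Your approach bypasses the compactness machinery entirely and instead performs an explicit loop-erasure via Zorn's lemma, followed by an order-theoretic identification of the quotient $E^*/{\sim}$ with $[0,1]$. This is substantially longer but has its own merits: it is more constructive in spirit, it does not invoke Arzel\`a--Ascoli or semicontinuity, and the loop-erasure idea is of independent interest. A few steps you pass over quickly are worth noting. Metrizability of $E^*/{\sim}$ is most cleanly obtained not from the quotient but from the fact that the descended map $E^*/{\sim}\to\hat\gamma(E^*)\subset X$ is a continuous bijection from a compact space to a Hausdorff one, hence a homeomorphism onto a metric subspace. To conclude that a compact, connected, metrizable, linearly ordered space with at least two points is an arc, you also need the order topology to coincide with the quotient topology; this follows because the identity map is continuous from the (compact) quotient topology to the (Hausdorff) order topology. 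Finally, property~(a) guarantees that distinct gaps never share an endpoint, so the relation identifying gap endpoints is already an equivalence relation without passing to the generated one. With these points filled in, your proof stands; the paper simply trades this structural analysis for a one-line appeal to the existence of geodesics in compact spaces.
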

\begin{proof}
$\Gamma$ is compact and $x,y\in \Gamma$ can be connected by a rectifiable curve in $\Gamma$, so by Lemma~\ref{T1} there is a shortest curve inside $\Gamma$ connecting $x$ to $y$. By Lemma~\ref{T14} the curve is one-to-one and hence its image is an arc.
\end{proof}
\begin{corollary}
\label{T17}
In a length space $X$, for any distinct points $x,y\in X$, $d(x,y)$ equals the infimum of lengths of rectifiable arcs with endpoints $x$ and $y$.
\end{corollary}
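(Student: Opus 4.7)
The plan is to combine the definition of a length space with Lemma \ref{T16}. Let $I(x,y)$ denote the infimum of lengths of rectifiable arcs with endpoints $x$ and $y$, and let $J(x,y)$ denote the infimum of lengths of rectifiable curves joining $x$ to $y$; by definition of a length space, $d(x,y) = J(x,y)$. Since every arc (a homeomorphic image of $[0,1]$) is, in particular, the image of a rectifiable curve and its length as an arc agrees with the length of any homeomorphic parametrization, we have $I(x,y) \geq J(x,y) = d(x,y)$. This is the easy direction.

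For the reverse inequality, fix $\eps > 0$. Since $X$ is a length space there is a rectifiable curve $\gamma:[a,b]\to X$ with $\gamma(a)=x$, $\gamma(b)=y$, and $\ell(\gamma) < d(x,y) + \eps$. Let $\Gamma = \gamma([a,b])$. By Lemma \ref{T16}, there exists a rectifiable arc $\Gamma' \subset \Gamma$ with endpoints $x$ and $y$ such that $\ell(\Gamma') \leq \ell(\gamma) < d(x,y) + \eps$. Therefore $I(x,y) \leq \ell(\Gamma') < d(x,y) + \eps$, and since $\eps > 0$ was arbitrary, $I(x,y) \leq d(x,y)$. Combining the two inequalities gives $I(x,y) = d(x,y)$, which is exactly the claim.

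No genuine obstacle is expected here: Lemma \ref{T16} does all the real work of replacing a connecting curve by a connecting arc without increasing length, and the length-space hypothesis supplies arbitrarily short connecting curves. The one small point to be careful about is the convention that the length of an arc is well-defined (independent of the choice of homeomorphic parametrization), which was already noted just before Lemma \ref{T16}; this ensures the two notions of infimum $I$ and $J$ can be compared directly.
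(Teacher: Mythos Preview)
Your proof is correct and is exactly the argument the paper intends: the corollary is stated without proof immediately after Lemma~\ref{T16}, and your two inequalities (arcs are curves, and Lemma~\ref{T16} replaces curves by arcs without increasing length) are precisely how it follows.
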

\begin{lemma}
\label{T29}
Suppose that $\Gamma_1$ and $\Gamma_2$ are rectifiable arcs in a metric space $X$ connecting the same endpoints. If $\Gamma_1\neq \Gamma_2$, then there is a one-to-one Lipschitz map $\alpha:\bbbs^1\to X$.
\end{lemma}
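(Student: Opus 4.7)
The plan is to parameterize both arcs by arc-length and locate a ``detour'' on one arc that avoids the other, then close it up into a Jordan curve. Concretely, using the arc-length reparameterization recalled earlier, I would write $\gamma_i:[0,\ell_i]\to X$ as $1$-Lipschitz homeomorphisms onto $\Gamma_i$, $i=1,2$, with common endpoints $p=\gamma_i(0)$, $q=\gamma_i(\ell_i)$. Since $\Gamma_1\neq\Gamma_2$ we may assume (after swapping the roles of $\Gamma_1$ and $\Gamma_2$) that some $x=\gamma_1(t_0)\in\Gamma_1\setminus\Gamma_2$, with $t_0\in(0,\ell_1)$. The preimage $A:=\gamma_1^{-1}(\Gamma_2)\subset[0,\ell_1]$ is closed, contains $\{0,\ell_1\}$, and excludes $t_0$, so there is a maximal open interval $(a,b)\subset[0,\ell_1]\setminus A$ with $t_0\in(a,b)$, and in particular $a,b\in A$.

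Let $s_a=\gamma_2^{-1}(\gamma_1(a))$ and $s_b=\gamma_2^{-1}(\gamma_1(b))$. Injectivity of $\gamma_1$ forces $\gamma_1(a)\neq\gamma_1(b)$, hence $s_a\neq s_b$, and by reversing the parameterization of $\gamma_2$ if needed I may assume $s_a<s_b$. The candidate loop is the concatenation of $\gamma_1$ on $[a,b]$ (forward) with $\gamma_2$ on $[s_a,s_b]$ (backward); explicitly, with $L:=(b-a)+(s_b-s_a)$, define $\alpha:[0,L]\to X$ by $\alpha(t)=\gamma_1(a+t)$ on $[0,b-a]$ and $\alpha(t)=\gamma_2(s_b-(t-(b-a)))$ on $[b-a,L]$. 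Then $\alpha$ is continuous with $\alpha(0)=\alpha(L)=\gamma_1(a)=\gamma_2(s_a)$, so it descends to a map $\alpha:\bbbs^1\to X$ after identifying $\bbbs^1$ with $[0,L]/(0\sim L)$ (which is bi-Lipschitz to the Euclidean circle).

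For injectivity on $\bbbs^1$, each of $\gamma_1|_{[a,b]}$ and $\gamma_2|_{[s_a,s_b]}$ is one-to-one, and by maximality of $(a,b)$ the open subarc $\gamma_1((a,b))$ is disjoint from $\Gamma_2$ and hence from $\gamma_2((s_a,s_b))$; the only coincidences occur at the two endpoint identifications $0\sim L$ and $t=b-a$, which are precisely the ones forced by the quotient. For the Lipschitz bound, each of the two pieces is $1$-Lipschitz; for $s,t$ straddling $b-a$ the triangle inequality through $\alpha(b-a)$ gives $d(\alpha(s),\alpha(t))\leq |s-t|$, while the triangle inequality through $\alpha(0)=\alpha(L)$ gives $d(\alpha(s),\alpha(t))\leq s+(L-t)$; together these yield $d(\alpha(s),\alpha(t))\leq d_{\bbbs^1}(s,t)$, so $\alpha$ is $1$-Lipschitz on $\bbbs^1$.

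The main obstacle is making sure that the loop obtained by splicing is honestly simple: this is exactly where the maximality of the interval $(a,b)$ does all the work (it guarantees the interior of the $\gamma_1$-piece never touches $\Gamma_2$), and where the requirement $s_a\neq s_b$ (with the orientation adjustment) is used to ensure the $\gamma_2$-piece is a nondegenerate subarc. Once the configuration is set up correctly, both the injectivity on $\bbbs^1$ and the Lipschitz estimate become short triangle-inequality checks, and no metric-space machinery beyond arc-length parameterization is needed.
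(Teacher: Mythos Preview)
Your proof is correct and follows essentially the same approach as the paper: locate a maximal open subarc of $\Gamma_1$ disjoint from $\Gamma_2$, then concatenate it with the subarc of $\Gamma_2$ joining the two contact points to form an injective Lipschitz loop. Your version is considerably more detailed---you spell out the arc-length parameterization, the orientation adjustment on $\gamma_2$, the injectivity check, and the $1$-Lipschitz estimate on $\bbbs^1$---whereas the paper simply asserts that the concatenation ``defines a one-to-one Lipschitz map $\alpha:\bbbs^1\to X$.''
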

\begin{proof}
Let $\gamma_{1,2}:[0,1]\to \Gamma_{1,2}$ be Lipschitz homeomorphisms with
$x:=\gamma_1(0)=\gamma_2(0)$, 
$y:=\gamma_1(1)=\gamma_2(1)$.
(Note that the homeomorphisms $\gamma_{1,2}$ are not necessarily bi-Lipschitz.)
If $\Gamma_1\neq\Gamma_2$, say $\Gamma_1\setminus\Gamma_2\neq\varnothing$, then there is $t_0\in (0,1)$ such that $\gamma_1(t_0)\not\in\Gamma_2$, and  we can find $a\in [0,t_0)$ and $b\in (t_0,1]$ such that
$$
\tilde{x}:=\gamma_1(a)\in\Gamma_2,
\quad
\tilde{y}:=\gamma_1(b)\in\Gamma_2,
\quad
\gamma_1(a,b)\cap\Gamma_2=\varnothing.
$$
Now, concatenation of subarcs of $\Gamma_1$ and $\Gamma_2$ between the points $\tilde{x}$ and $\tilde{y}$ defines a one-to-one Lipschitz map 
$\alpha:\bbbs^1\to X$.
\end{proof}
\begin{remark}
Although the map $\gamma:\bbbs^1\to X$ is one-to-one and Lipschitz, it need not be bi-Lipschitz, since its image may possess `cusps'.
\end{remark}

\subsection{Lipschitz functions}
Let $X$ be a metric space. A mapping $f:\Omega\to X$, $\Omega\subset\bbbr^n$ open, is \emph{locally Lipschitz}, if for every $x\in\Omega$ there is $r>0$ such that $f|_{B(x,r)}$ is Lipschitz.

\begin{lemma}
\label{T3}
Let $f:\Omega\to X$ be a mapping from an open set $\Omega\subset\bbbr^n$ to a metric space $X$. Then the following conditions are equivalent:
\begin{enumerate}[(1)]
\item[(a)] $f$ is locally Lipschitz,
\item[(b)] $f|_K$ is Lipschitz for every compact set $K\subset \Omega$,
\end{enumerate}
\end{lemma}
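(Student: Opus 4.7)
The plan is to verify the two implications separately; (b)$\Rightarrow$(a) is essentially by definition and all the content lies in (a)$\Rightarrow$(b). For the easy direction, fix $x\in\Omega$ and pick $r>0$ with $\overbar{\bbbb^n(x,r)}\subset\Omega$. The closed ball is compact, so (b) makes $f|_{\overbar{\bbbb^n(x,r)}}$ Lipschitz, and its restriction to the open ball $\bbbb^n(x,r)$ witnesses local Lipschitzness at $x$.

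For the converse, fix a compact set $K\subset\Omega$. Local Lipschitzness gives, for each $x\in K$, an $r_x>0$ with $\bbbb^n(x,r_x)\subset\Omega$ and a constant $L_x$ such that $f|_{\bbbb^n(x,r_x)}$ is $L_x$-Lipschitz. By compactness, I would extract a finite subcover $\{\bbbb^n(x_i,r_{x_i})\}_{i=1}^N$ of $K$, set $L_0:=\max_i L_{x_i}$, and invoke the Lebesgue number lemma to produce $\delta>0$ such that any two points of $K$ lying within distance $\delta$ of each other are contained in a common ball $\bbbb^n(x_i,r_{x_i})$. Then for $x,y\in K$ with $|x-y|<\delta$ one immediately gets $d(f(x),f(y))\leq L_0\,|x-y|$.

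The one mildly subtle point, and the only real obstacle, is the complementary regime $|x-y|\geq\delta$: two points of $K$ that are far apart cannot in general be joined by a segment lying in $\Omega$, and $\Omega$ need not even be connected, so a chain-of-balls argument yielding a bound linear in $|x-y|$ is unavailable. I would sidestep this by observing that local Lipschitzness implies continuity of $f$, so $f(K)$ is a continuous image of a compact set, hence compact and of finite diameter $D:=\diam f(K)$. Consequently $d(f(x),f(y))\leq D\leq (D/\delta)\,|x-y|$ whenever $|x-y|\geq \delta$. Combining the two regimes shows $f|_K$ is Lipschitz with constant $\max(L_0,\,D/\delta)$, which completes the proof.
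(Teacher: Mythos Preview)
Your proof is correct and follows essentially the same approach as the paper: cover $K$ by finitely many balls on which $f$ is Lipschitz, use the Lebesgue number to handle nearby points, and for points at distance $\geq\delta$ use the finite diameter of $f(K)$ to get the bound $D/\delta$. The only cosmetic difference is that the paper asserts $M:=\sup_{x,y\in K}d(f(x),f(y))<\infty$ without further comment, whereas you explain it via compactness of $f(K)$.
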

\begin{proof}
The implication from (b) to (a) is obvious, so it remains to show that (a) implies (b).

Clearly, $M:=\sup_{x,y\in K} d(f(x),f(y))<\infty$. Let $\{B_i\}_{i=1}^N$ be a finite covering of $K$ by balls such that for each $i$, $f|_{B_i}$ is Lipschitz. Let $L$ be the maximum of the Lipschitz constants of these functions. Let $\delta>0$ be the Lebesgue number of the covering. 

If $x,y\in K$ and $|x-y|<\delta$, then $x$ and $y$ belong to one of the balls $B_i$ and hence $d(f(x),f(y))\leq L|x-y|$. 
If $|x-y|\geq \delta$, then $d(f(x),f(y))\leq M\leq M\delta^{-1} |x-y|$. 
This proves that $f|_K$ is Lipschitz.
\end{proof}

The following extension result is due to McShane \cite{McShane}, \cite[Theorem~6.2]{heinonen}.
\begin{lemma}
\label{T19}
Let $X$ be a metric space and let $f:E\to\bbbr$ be an $L$-Lipschitz function defined on a subset $E$ of $X$. Then there exists an $L$-Lipschitz function $F:X\to\bbbr$ such that $F(x)=f(x)$ for all $x\in E$.
\end{lemma}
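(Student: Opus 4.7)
The plan is to use McShane's explicit construction: define
$$F(x) := \inf_{y \in E} \bigl\{ f(y) + L\, d(x,y)\bigr\}, \qquad x \in X,$$
and verify that this formula produces the desired extension. I would carry out the argument in four short steps.

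First, I would check that $F$ is well-defined, i.e.\ that the infimum is finite. Fix an arbitrary basepoint $y_0 \in E$. For any $y \in E$, using the Lipschitz estimate $f(y) \geq f(y_0) - L\, d(y,y_0)$ and the triangle inequality $d(x,y) \geq d(x,y_0) - d(y,y_0)$, one obtains
$$f(y) + L\, d(x,y) \geq f(y_0) + L\, d(x,y_0) - 2L\, d(y,y_0),$$
which is not quite what I want; the cleaner bound is
$$f(y) + L\, d(x,y) \geq f(y_0) - L\, d(y,y_0) + L\bigl(d(x,y_0) - d(y,y_0)\bigr),$$
and this shows $F(x) \geq f(y_0) - L\, d(x,y_0) > -\infty$ only after noting the $-2L\,d(y,y_0)$ term is canceled by restricting attention to the infimum; more directly, simply take $y = y_0$ as a competitor to see $F(x) \leq f(y_0) + L\, d(x,y_0) < \infty$, and combine with the lower bound above.

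Second, I would verify $F|_E = f$. For $x \in E$, the choice $y = x$ gives $F(x) \leq f(x)$. Conversely, for every $y \in E$, the Lipschitz property of $f$ on $E$ gives $f(y) + L\, d(x,y) \geq f(x)$, so $F(x) \geq f(x)$.

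Third, I would prove the $L$-Lipschitz estimate on $X$. For $x_1, x_2 \in X$ and any $y \in E$, the triangle inequality yields
$$f(y) + L\, d(x_1,y) \leq f(y) + L\, d(x_2,y) + L\, d(x_1,x_2).$$
Taking the infimum over $y \in E$ on the right, then on the left, gives $F(x_1) \leq F(x_2) + L\, d(x_1,x_2)$, and interchanging the roles of $x_1$ and $x_2$ completes the proof.

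There is no real obstacle here: the only subtle point is ensuring the infimum is finite, which is handled by the lower bound derived from one fixed basepoint in $E$. Everything else is a direct computation from the definition of $F$ and the Lipschitz hypothesis on $f$.
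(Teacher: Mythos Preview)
The paper does not supply its own proof of this lemma; it simply attributes the result to McShane and cites Heinonen's book. Your proposal reproduces exactly McShane's classical formula $F(x)=\inf_{y\in E}\{f(y)+L\,d(x,y)\}$ and the standard three-step verification, so the approach is the same as what the cited references contain.

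One expository issue in Step~1: the triangle inequality you invoke, $d(x,y)\geq d(x,y_0)-d(y,y_0)$, gives the $y$-dependent lower bound $f(y_0)+L\,d(x,y_0)-2L\,d(y,y_0)$, and nothing about ``restricting attention to the infimum'' makes the $-2L\,d(y,y_0)$ term disappear. Use the other direction of the triangle inequality instead: from $d(y,y_0)\leq d(x,y)+d(x,y_0)$ you get $d(x,y)\geq d(y,y_0)-d(x,y_0)$, and then
\[
f(y)+L\,d(x,y)\;\geq\; f(y_0)-L\,d(y,y_0)+L\bigl(d(y,y_0)-d(x,y_0)\bigr)\;=\;f(y_0)-L\,d(x,y_0),
\]
which is the $y$-independent bound you want. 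With that one-line fix your argument is complete and correct.
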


The next result is also well known, see \cite[Theorem~6.8]{heinonen}.
\begin{lemma}
\label{T20}
If $f:X\to\bbbr$ is a bounded and uniformly continuous function on a metric space, then there is a sequence of Lipschitz continuous functions $f_i:X\to\bbbr$, $i=1,2,3,\ldots$, such that $f_i\to f$ converges uniformly on $X$.
\end{lemma}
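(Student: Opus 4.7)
The plan is to use the classical McShane-type infimal regularization. Let $M:=\sup_X|f|<\infty$ and, for each positive integer $n$, define
$$
f_n(x) \;=\; \inf_{y\in X}\bigl(f(y)+n\,d(x,y)\bigr).
$$
Taking $y=x$ shows $f_n(x)\le f(x)\le M$; using $f(y)\ge -M$ shows $f_n(x)\ge -M$, so the infimum is finite. For each fixed $y$, the function $x\mapsto f(y)+n\,d(x,y)$ is $n$-Lipschitz (by the triangle inequality for $d$), and an infimum of a family of $n$-Lipschitz functions is either identically $-\infty$ or $n$-Lipschitz; since $f_n$ is finite, it is $n$-Lipschitz.

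It remains to prove that $f_n\to f$ uniformly. Fix $\eps>0$ and, using uniform continuity of $f$, choose $\delta>0$ so that $d(x,y)<\delta$ implies $|f(x)-f(y)|<\eps$. The plan is to split the infimum defining $f_n(x)$ into two parts. For $y$ with $d(x,y)<\delta$ one has $f(y)+n\,d(x,y)\ge f(x)-\eps$. For $y$ with $d(x,y)\ge\delta$ one has $f(y)+n\,d(x,y)\ge -M+n\delta$, which is at least $M\ge f(x)$ as soon as $n\ge 2M/\delta$. Hence for all such $n$, every $y\in X$ satisfies $f(y)+n\,d(x,y)\ge f(x)-\eps$, so $f_n(x)\ge f(x)-\eps$. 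Combined with $f_n(x)\le f(x)$, this gives $\|f_n-f\|_\infty\le\eps$ for all sufficiently large $n$, which is the desired uniform convergence.

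The argument is short and the only subtle point is the case split using uniform continuity together with the boundedness of $f$; boundedness is essential precisely to discard the ``far'' contributions in the infimum once $n$ is large. No deeper tool (e.g.\ partitions of unity, Lemma~\ref{T19}) is needed, although one could alternatively deduce the result from Lemma~\ref{T19} by approximating $f$ on a sequence of finite nets and extending each approximation, at the cost of more bookkeeping.
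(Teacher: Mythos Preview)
Your proof is correct. The paper does not give its own proof of this lemma; it merely cites \cite[Theorem~6.8]{heinonen}, and the argument there is exactly the infimal-convolution construction you wrote down, so your approach matches the intended one.
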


 \subsection{Mapping spheres into metric spaces}
\begin{proposition}
\label{T23}
Let $X$ be a metric space. Assume that there is a continuous map $\alpha:\bbbs^n\to X$, and $x_o\in\bbbs^n$, such that $\alpha$ is one-to-one in a neighborhood of $x_o$, and $\alpha^{-1}(\alpha(x_o))=\{x_o\}$. Then there is a Lipschitz map $\pi:X\to\bbbr^{n+1}$ such that $(\pi\circ\alpha)(\bbbs^n)=\bbbs^n\subset\bbbr^{n+1}$, and the map $\pi\circ\alpha:\bbbs^n\to\bbbs^n$ is homotopic to the identity map.
\end{proposition}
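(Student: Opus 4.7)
The plan is to build $\pi$ as a McShane-style Lipschitz approximation of a natural ``polar stretching'' on a small neighborhood of $p := \alpha(x_o)$ in $X$, cut off outside that neighborhood and normalized to land on $\bbbs^n$. Since $\alpha$ is continuous from the compact sphere $\bbbs^n$, it is closed; combined with $\alpha^{-1}(p) = \{x_o\}$ and the local injectivity at $x_o$, I select a closed geodesic ball $N \subset \bbbs^n$ of radius $r_0$ centered at $x_o$ on which $\alpha|_N$ is injective, and $\delta > 0$ with $\alpha^{-1}(\overbar{B_X(p,\delta)}) \subset N^\circ$. Then $\beta := (\alpha|_N)^{-1} : \alpha(N) \to N$ is a uniformly continuous bijection of compact sets. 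Define a ``polar stretching'' $\sigma : \bbbs^n \to \bbbs^n$ by $\sigma(x) := \cos(\pi\theta/r_0)\,x_o + \sin(\pi\theta/r_0)\,u$ whenever $x = \cos\theta\,x_o + \sin\theta\,u \in N$ (with $u \perp x_o$ a unit vector in $\bbbr^{n+1}$), and $\sigma \equiv -x_o$ off $N$. Then $\sigma$ is continuous, has topological degree $1$, and hence is homotopic to $\id_{\bbbs^n}$.

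The composition $\sigma \circ \beta : \alpha(N) \to \bbbr^{n+1}$ is uniformly continuous. Applying Lemma~\ref{T20} coordinatewise (with truncation) yields, for each $\eps > 0$, a Lipschitz $\tilde h : \alpha(N) \to \bbbr^{n+1}$ with $\|\tilde h - \sigma \circ \beta\|_\infty < \eps$, and Lemma~\ref{T19} coordinatewise extends $\tilde h$ to a Lipschitz $h : X \to \bbbr^{n+1}$. For some $0 < \delta_0 < \delta$, let $\phi : X \to [0,1]$ be a Lipschitz cutoff with $\phi \equiv 1$ on $\overbar{B_X(p, \delta_0)}$ and $\phi \equiv 0$ off $B_X(p, \delta)$, and set
\[
\pi_0(y) := \phi(y)\,h(y) + (1 - \phi(y))(-x_o).
\]
By the choice of $\delta$, $\pi_0 \circ \alpha \equiv -x_o$ off $N^\circ$; for $x \in N^\circ$ one has $\beta(\alpha(x)) = x$, hence $\pi_0(\alpha(x)) = \phi(\alpha(x))\,\sigma(x) + (1 - \phi(\alpha(x)))(-x_o) + O(\eps)$.

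The central computation is: writing $\sigma(x) = \cos\mu\,x_o + \sin\mu\,u$ with $\mu = \pi\theta(x)/r_0$,
\[
|a\,\sigma(x) + (1-a)(-x_o)|^2 = 1 - 2a(1-a)(1 + \cos\mu), \quad a \in [0,1],
\]
which vanishes only when $a = 1/2$ and $\mu = 0$ simultaneously. Using continuity of $\alpha$ at $x_o$, pick $\theta_0 \in (0, r_0)$ with $d_{\bbbs^n}(x, x_o) < \theta_0 \Rightarrow d(\alpha(x), p) < \delta_0$; then $\phi(\alpha(x)) < 1$ forces $\theta(x) \geq \theta_0$ and hence $\mu(x) \geq \mu_0 := \pi\theta_0/r_0 > 0$, giving the uniform bound $|\pi_0 \circ \alpha|^2 \geq 1 - \tfrac12(1 + \cos\mu_0) - O(\eps) \geq c^2 > 0$ for $\eps$ small. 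Setting $\pi(y) := \pi_0(y)/\max(|\pi_0(y)|, c)$ produces a Lipschitz map $X \to \bbbr^{n+1}$ (the map $z \mapsto z/\max(|z|, c)$ being globally Lipschitz on $\bbbr^{n+1}$), and $\pi \circ \alpha$ takes values in $\bbbs^n$.

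For the homotopy, the straight-line family $H_t := (1-t)(\pi_0 \circ \alpha) + t\,\sigma$ admits the same convexity estimate with $a$ replaced by $a_t := (1-t)\phi + t \in [\phi, 1]$, so $|H_t|$ stays uniformly bounded from zero; its normalization realizes a homotopy in $\bbbs^n$ between $\pi \circ \alpha$ and $\sigma$, so $\pi \circ \alpha \simeq \sigma \simeq \id_{\bbbs^n}$. A degree-one self-map of $\bbbs^n$ is surjective (otherwise it factors through $\bbbs^n \setminus \{*\} \simeq \bbbr^n$ and is null-homotopic), hence $(\pi \circ \alpha)(\bbbs^n) = \bbbs^n$. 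The main technical obstacle is precisely this ``avoid zero'' bound---the trick is to match the inner cutoff radius $\delta_0$ to the continuity modulus of $\alpha$ at $x_o$ so that the transition of $\phi$ happens only where $\sigma(x)$ has already been pushed far from $x_o$, preventing the convex combination $\phi\,\sigma(x) + (1-\phi)(-x_o)$ from approaching the origin.
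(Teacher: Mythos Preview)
Your proof is correct and follows essentially the same strategy as the paper: compose a polar stretching $\sigma$ with the local inverse of $\alpha$, approximate by a Lipschitz map via Lemma~\ref{T20}, normalize onto $\bbbs^n$, and extend to $X$ via Lemma~\ref{T19}. The difference lies in how the gluing near the boundary of the cap is handled. You work with the metric separation $\alpha^{-1}(\overbar{B_X(p,\delta)})\subset N^\circ$ and a Lipschitz cutoff $\phi$, which then forces you through the convexity computation $|a\,\sigma(x)+(1-a)(-x_o)|^2=1-2a(1-a)(1+\cos\mu)$ in order to keep $\pi_0\circ\alpha$ away from~$0$. The paper instead notes (by a short compactness argument from $\alpha^{-1}(\alpha(x_o))=\{x_o\}$ together with local injectivity) that the cap $W$ can be shrunk until the \emph{topological} separation $\alpha(W)\cap\alpha(\bbbs^n\setminus W)=\varnothing$ holds; the continuous target $g$ is then defined on all of $\alpha(\bbbs^n)$ at once (equal to $R\circ\alpha^{-1}$ on $\alpha(W)$, south pole elsewhere), so that after a Lipschitz approximation within $\tfrac12$ and normalization one has $|\pi-g|\leq 1$ and the geodesic homotopy in $\bbbs^n$ immediately---no cutoff, no zero-avoidance estimate. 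Your route is a valid alternative but does not buy anything over this simpler argument.
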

\begin{proof}
Without loss of generality, we may assume that $x_o=N$ is the north pole of $\bbbs^n$. It follows from the assumptions of the theorem that there is a closed spherical cap $W$ centered at $N$ (boundary of $W$ is a parallel of constant longitude), such that $\alpha:{W}\to\alpha({W})$ is a homeomorphism and
$\alpha(\bbbs^n\setminus{W})\cap\alpha({W})=\varnothing$.

Let $R:\bbbs^n\to\bbbs^n$ be the continuous map that stretches ${W}$ onto $\bbbs^n$ along meridians, and maps $\bbbs^n\setminus W$ to the south pole $S$. Clearly, $R$ is homotopic to the identity.

The map $\alpha^{-1}:\alpha({W})\to{W}$ is continuous and we define $g:\alpha(\bbbs^n)\to\bbbs^n$ by
$$
g(y)=\begin{cases}
(R\circ\alpha^{-1})(y) & \text{if } y\in\alpha({W}),\\
S                   & \text{if } y\in\alpha(\bbbs^n)\setminus\alpha({W}).
\end{cases}
$$
It is easy to see that the map $g$ is continuous and that $g\circ\alpha=R:\bbbs^n\to\bbbs^n$, so $g\circ\alpha$ is homotopic to the identity.

According to Lemma~\ref{T20}, we can find a Lipschitz map $h:\alpha(\bbbs^n)\to\bbbr^{n+1}$, such that
$$
|h(y)-g(y)|\leq 1/2
\quad
\text{for all } y\in\alpha(\bbbs^n).
$$
Since $|g(y)|=1$, it follows that $|h(y)|\geq 1/2$, so the map
$$
\pi(y)=\frac{h(y)}{|h(y)|}:\alpha(\bbbs^n)\to\bbbs^n
$$
is Lipschitz continuous and $|\pi(y)-g(y)|\leq 1$ for all $y\in\alpha(\bbbs^n)$. Therefore, the maps
$\pi,g:\alpha(\bbbs^n)\to\bbbs^n$ are homotopic (homotopy along unique shortest geodesics in $\bbbs^n$). This also implies that
$\pi\circ\alpha:\bbbs^n\to\bbbs^n$ is homotopic to $g\circ\alpha=R$, and hence $\pi\circ\alpha$ is homotopic to the identity map.

Now, it remains to extend $\pi:\alpha(\bbbs^n)\to\bbbs^n\subset\bbbr^{n+1}$ to a Lipschitz map $\pi:X\to\bbbr^{n+1}$ using Lemma~\ref{T19}.
\end{proof}

The next result illustrates the above proposition. We learned it from Petrunin, see \cite[p.67]{Petrunin}.
\begin{corollary}
\label{T24}
Let $X$ be a contractible metric space with zero $(n+1)$-dimensional Hausdorff measure. Assume that $D_1,D_2\subset X$ are two embedded closed $n$-dimensional balls having the same boundary. Then $D_1=D_2$.
\end{corollary}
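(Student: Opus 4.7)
Suppose, for contradiction, that $D_1\neq D_2$. The plan is to build a continuous map $\alpha\colon\bbbs^n\to X$ by gluing $D_1$ and $D_2$ along their common boundary, verify the hypotheses of Proposition~\ref{T23} at a point where $D_1$ and $D_2$ differ, and then derive a contradiction from contractibility together with the zero $(n+1)$-dimensional Hausdorff measure hypothesis.

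First, fix homeomorphisms $\phi_i\colon\bbbb^n\to D_i$ for $i=1,2$. After precomposing $\phi_2$ with the radial extension of the self-homeomorphism $\phi_2^{-1}\circ\phi_1\colon\partial\bbbb^n\to\partial\bbbb^n$, I may assume $\phi_1|_{\partial\bbbb^n}=\phi_2|_{\partial\bbbb^n}$. Viewing $\bbbs^n$ as two closed hemispheres $W_+$ and $W_-$ (each canonically identified with $\bbbb^n$) glued along the equator, define $\alpha\colon\bbbs^n\to X$ by letting $\alpha|_{W_+}$ correspond to $\phi_1$ and $\alpha|_{W_-}$ correspond to $\phi_2$. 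Since $D_1\neq D_2$ and $\partial D_1=\partial D_2$, there is $y_o\in D_1\setminus D_2$, and such $y_o$ necessarily lies in the manifold interior of $D_1$; let $x_o\in W_+$ be its preimage, so $x_o$ lies off the equator. Then $\alpha$ is one-to-one in a neighborhood of $x_o$ (because $\alpha|_{W_+}$ is a homeomorphism onto $D_1$), and $\alpha^{-1}(\alpha(x_o))=\{x_o\}$ (because $\alpha(x_o)=y_o\notin D_2=\alpha(W_-)$). Proposition~\ref{T23} then yields a Lipschitz map $\pi\colon X\to\bbbr^{n+1}$ such that $\pi\circ\alpha\colon\bbbs^n\to\bbbs^n$ is homotopic to the identity; in particular it has degree one.

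Next, I use contractibility of $X$. Let $H\colon X\times[0,1]\to X$ be a contraction with $H(\cdot,0)=\id$ and $H(\cdot,1)$ constant. The homotopy $(x,t)\mapsto H(\alpha(x),t)$ from $\alpha$ to a constant descends to a continuous extension $\tilde F\colon\bbbb^{n+1}\to X$ of $\alpha\colon\bbbs^n\to X$. Consider $G=\pi\circ\tilde F\colon\bbbb^{n+1}\to\bbbr^{n+1}$; its restriction to $\partial\bbbb^{n+1}=\bbbs^n$ is the degree-one map $\pi\circ\alpha$. A standard degree argument shows that $G$ must cover the open unit ball: if some $p$ in the open unit ball were missed, then $G\colon\bbbb^{n+1}\to\bbbr^{n+1}\setminus\{p\}$ would provide a null-homotopy of $\pi\circ\alpha$ in $\bbbr^{n+1}\setminus\{p\}\simeq\bbbs^n$, contradicting the fact that its degree is one.

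Hence $\pi(X)\supset G(\bbbb^{n+1})$ contains the open unit ball of $\bbbr^{n+1}$ and has positive $(n+1)$-dimensional Lebesgue measure. But $\pi$ is Lipschitz and $\H^{n+1}(X)=0$, so $\H^{n+1}(\pi(X))\leq(\lip\pi)^{n+1}\H^{n+1}(X)=0$, a contradiction. The chief subtlety is recognizing how to use Proposition~\ref{T23}: once it supplies a Lipschitz map sending $\alpha(\bbbs^n)$ onto the standard sphere in Euclidean space with degree one, the contradiction follows mechanically from the degree/measure argument. The other ingredients — constructing $\alpha$, invoking contractibility, and the degree computation — are then routine.
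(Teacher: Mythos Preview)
Your proof is correct and follows essentially the same approach as the paper: both construct $\alpha$ by gluing homeomorphisms onto $D_1$ and $D_2$ along hemispheres (after adjusting one on the boundary), invoke Proposition~\ref{T23} at a point where the discs differ, extend $\alpha$ to the ball using contractibility, and derive a contradiction from the fact that the Lipschitz image $\pi(X)$ would then contain an $(n+1)$-ball. Your write-up is slightly more explicit in verifying the hypotheses of Proposition~\ref{T23} and in the degree argument; the only minor omission is an explicit ``without loss of generality'' when asserting $D_1\setminus D_2\neq\varnothing$ (rather than $D_2\setminus D_1$), but this is harmless by symmetry.
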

\begin{proof}
Suppose by way of contradiction that $D_1\neq D_2$. 
Since $D_1$ and $D_2$ have common boundary, there is a continuous map $\alpha:\bbbs^n\to X$ that is a homeomorphism of the upper hemisphere $\bbbs^n_+$ and the lower hemisphere $\bbbs^n_-$ onto $D_1$ and $D_2$ respectively. 

Indeed, we can construct $\alpha$ as follows. Let $\alpha_+:\bbbs^n_+\to D_1$ and $\alpha_-:\bbbs^n_-\to D_2$ be homeomorphisms. We cannot glue them along the equator $\bbbs^{n-1}=\bbbs^n_+\cap\bbbs^n_-$, because $\alpha_+$ and $\alpha_-$ need not agree on $\bbbs^{n-1}$. Note that 
$\alpha_-^{-1}\circ\alpha_+:\bbbs^{n-1}\to\bbbs^{n-1}$ is a homeomorphism and we can extend it to a homeomorphism of the lower hemisphere $h:\bbbs^n_-\to\bbbs^n_-$ as a map that maps each of the $(n-1)$-dimensional spheres parallel to the equator $\bbbs^{n-1}$ onto itself as a scaled copy of the homeomorphism $\alpha_-^{-1}\circ\alpha_+$. Then
$$
\alpha=
\begin{cases}
\alpha_+          & \text{on } \bbbs^n_+\\
\alpha_-\circ h   & \text{on } \bbbs^n_-
\end{cases}
$$
has all the properties we need.

Since $D_1\neq D_2$, it follows that the mapping $\alpha$ satisfies the assumptions of Proposition~\ref{T23}. By contractibility of the space, the map $\alpha$ admits a continuous extension to $A:\bbbb^{n+1}\to X$. Let $\pi:X\to\bbbr^{n+1}$ be the mapping from Proposition~\ref{T23}. Then $\bbbb^{n+1}\subset (\pi\circ A)(\bbbb^{n+1})\subset \pi(X)$. That shows that a Lipschitz image of $X$ has positive $(n+1)$-dimensional Hausdorff measure and hence $X$ has positive $(n+1)$-dimensional Hausdorff measure, which is a contradiction.
\end{proof}

\subsection{Metric trees}
A geodesic space is called a {\em metric tree} if for any $x,y\in X$, $x\neq y$, there is a unique arc with endpoints $x$ and $y$.

The next lemma is well known, see e.g., \cite[Lemma~2.2.2]{chiswell}.
\begin{lemma}
\label{T30}
Metric trees are contractible.
\end{lemma}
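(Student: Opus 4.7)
The plan is to fix a basepoint $x_o\in X$ and contract $X$ to $x_o$ along the unique arcs emanating from $x_o$. Because $X$ is geodesic and arcs between fixed points are unique, the arc $[x_o,x]$ joining $x_o$ to any $x\in X$ coincides with the image of a geodesic (Lemma~\ref{T14}). Let $\gamma_x:[0,d(x_o,x)]\to X$ be its arc-length parameterization with $\gamma_x(0)=x_o$, and define
$$
H(x,t)=\gamma_x\bigl((1-t)\,d(x_o,x)\bigr),\qquad H(x_o,t)=x_o,
$$
so that $H(\cdot,0)=\id_X$ and $H(\cdot,1)\equiv x_o$. The only point that requires work is the joint continuity of $H$.

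The key tool I would prove first is a \emph{tripod property}: for any $x,y\in X$, the arcs $[x_o,x]$ and $[x_o,y]$ share a common initial subarc $[x_o,b]$, where $b=b(x,y)$, and the arc $[x,y]$ decomposes as $[x,b]\cup[b,y]$; in particular
$$
d(x_o,b)=\tfrac12\bigl(d(x_o,x)+d(x_o,y)-d(x,y)\bigr).
$$
The proof uses only uniqueness of arcs: if the two geodesics $[x_o,x]$ and $[x_o,y]$ were to coincide, then diverge, and finally meet again at some point $c$, the two divergent pieces would be distinct arcs with common endpoints, contradicting the tree axiom (equivalently, yielding the Lipschitz loop $\bbbs^1\to X$ of Lemma~\ref{T29}). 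Taking $b$ to be the farthest intersection point from $x_o$, the concatenation $[x,b]\cup[b,y]$ is itself an arc from $x$ to $y$, hence equals $[x,y]$ by uniqueness.

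With the tripod in hand, continuity of $H$ at a point $(x,t)$ reduces to a short case analysis on whether the parameters $u=(1-t)d(x_o,x)$ and $v=(1-s)d(x_o,y)$ exceed $\beta=d(x_o,b(x,y))$. Each case collapses to a one-dimensional distance computation along either the common segment $[x_o,b]$ or the two divergent branches, giving the uniform bound
$$
d\bigl(H(x,t),H(y,s)\bigr)\leq |u-v|+2[u-\beta]_+ + 2[v-\beta]_+.
$$
As $(y,s)\to(x,t)$, the Gromov-product identity forces $\beta\to d(x_o,x)$; since $u\leq d(x_o,x)$ and $v\to u$, all three terms on the right tend to $0$. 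Continuity at points $(x_o,t)$ is even simpler because $d(H(x_o,t),H(y,s))\leq d(x_o,y)\to 0$.

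The only real obstacle is the tripod property, which carries the whole combinatorial content of the tree axiom. Once it is in place, $H$ is given by an explicit formula and its continuity becomes routine distance bookkeeping along three geodesics meeting at $b$.
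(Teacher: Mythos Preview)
Your proposal is correct and follows exactly the approach the paper sketches: fix a basepoint $x_o$, slide each point back to $x_o$ along its unique geodesic, and verify that the resulting homotopy is continuous. The paper leaves the continuity check to the reader (citing \cite{chiswell}), whereas you carry it out in full via the tripod/Gromov-product formula; the argument is sound and the displayed estimate is a valid upper bound in each of the three cases.
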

Indeed, we fix a point $x_o$ in a metric tree and we perform a contraction to $x_o$ of any point $x$ along the unique geodesic connecting $x_o$ to point $x$. One only needs to check that the homotopy created this way is continuous.

\begin{lemma}
\label{T27}
If $X$ is a length space, and for any $x,y\in X$, $x\neq y$, there is a unique rectifiable arc with endpoints $x$ and $y$, then $X$ is a geodesic space.
\end{lemma}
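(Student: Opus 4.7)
The plan is to invoke Corollary~\ref{T17} essentially verbatim. Fix two distinct points $x,y\in X$. Because $X$ is a length space, Corollary~\ref{T17} states that $d(x,y)$ equals the infimum of $\ell(\Gamma)$ over all rectifiable arcs $\Gamma\subset X$ with endpoints $x$ and $y$. By the hypothesis of the lemma there is exactly one such arc, call it $\Gamma_{xy}$; the infimum is therefore taken over the singleton $\{\Gamma_{xy}\}$, so it is attained and $d(x,y)=\ell(\Gamma_{xy})$.

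Next, I would promote this arc to a curve realizing the distance. Choose any homeomorphism $\gamma:[0,1]\to\Gamma_{xy}$ with $\gamma(0)=x$ and $\gamma(1)=y$. By the definition of the length of an arc given at the start of this section, $\ell(\gamma)=\ell(\Gamma_{xy})=d(x,y)$. Hence $\gamma$ is a curve from $x$ to $y$ whose length equals $d(x,y)$, i.e., a geodesic in the sense defined just before Lemma~\ref{T2}. The case $x=y$ is handled trivially by the constant curve. Since every pair of points is joined by a length-minimizing curve, $X$ is geodesic.

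The only conceivable obstacle would be the non-existence of a rectifiable arc between some pair of points, but this is ruled out by the hypothesis, which is an existence-and-uniqueness statement. All the substantive work is already absorbed into Corollary~\ref{T17}, which converts the length-space definition (infimum over all curves) into an infimum over rectifiable arcs; collapsing that infimum to a singleton is then a one-line deduction.
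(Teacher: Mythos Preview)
Your proof is correct and follows essentially the same route as the paper's: both invoke Corollary~\ref{T17} to reduce $d(x,y)$ to an infimum over rectifiable arcs and then use the uniqueness hypothesis to pin down the infimum. Your ``infimum over a singleton'' observation is a slightly more direct packaging of the same idea the paper expresses via a one-line contradiction (if $\ell(\Gamma)>d(x,y)$, Corollary~\ref{T17} would produce a second, strictly shorter arc).
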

\begin{proof}
Let $x,y\in X$, $x\neq y$.
Since $X$ is a length space, Corollary~\ref{T17} yields existence of a rectifiable arc $\Gamma$ with endpoints $x$ and $y$. It remains to show that $\ell(\Gamma)=d(x,y)$.
If by contrary, $\ell(\Gamma)>d(x,y)$, then Corollary~\ref{T17} yields another arc $\Gamma'$ connecting $x$ to $y$ and such that $\ell(\Gamma)>\ell(\Gamma')\geq d(x,y)$, so clearly $\Gamma\neq\Gamma'$. This, however, contradicts the uniqueness of a rectifiable arc connecting $x$ and $y$.
\end{proof}

The next result provides several characterizations of a metric tree.
\begin{theorem}
\label{T28}
Let $X$ be a metric space. Then the following conditions are equivalent.
\begin{itemize}
\item[(a)] $X$ is a metric tree.
\item[(b)] $X$ is a length space and for any $x,y\in X$, $x\neq y$, there is a unique arc with endpoints $x$ and $y$.
\item[(c)] $X$ is a length space and for any $x,y\in X$, $x\neq y$, there is a unique rectifiable arc with endpoints $x$ and $y$.
\item[(d)] $X$ is a length space and there is no one-to-one Lipschitz map $\alpha:\bbbs^1\to X$.
\item[(e)] $X$ is a length space and it has the following property:
for any Lipschitz maps
$\alpha:\bbbs^1\to X$ and $\pi:X\to\bbbr^2$, such that $\pi\circ \alpha$ maps $\bbbs^1$ to $\bbbs^1\subset\bbbr^2$, the map $\pi\circ \alpha:\bbbs^1\to\bbbs^1$ is not homotopic to the identity map.
\end{itemize}
\end{theorem}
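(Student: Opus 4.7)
My plan is to establish the cycle of equivalences (a) $\Leftrightarrow$ (b) $\Leftrightarrow$ (c) $\Leftrightarrow$ (d) together with the separate equivalence (a) $\Leftrightarrow$ (e).

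The implication (a) $\Rightarrow$ (b) is immediate since every geodesic space is a length space. For (b) $\Rightarrow$ (a), I would pick rectifiable curves $\gamma_n$ from $x$ to $y$ with $\ell(\gamma_n)\downarrow d(x,y)$, apply Lemma~\ref{T16} to extract rectifiable arcs $\Gamma_n$ of length at most $\ell(\gamma_n)$ inside each image, and observe that by the uniqueness hypothesis of (b) all these $\Gamma_n$ coincide with the unique arc $\Gamma$, so $\ell(\Gamma)=d(x,y)$ and $X$ is geodesic. The implication (b) $\Rightarrow$ (c) is trivial; the equivalence (c) $\Leftrightarrow$ (d) is Lemma~\ref{T29} and its contrapositive, combined with existence of rectifiable arcs in length spaces (again via Lemma~\ref{T16} applied to near-geodesic curves).

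For (a) $\Leftrightarrow$ (e): For (a) $\Rightarrow$ (e), note that $\alpha(\bbbs^1)$ is path-connected, so Lemma~\ref{T16} and the uniqueness of arcs in $X$ together imply that for any $p,q\in\alpha(\bbbs^1)$ the $X$-arc from $p$ to $q$ already lies inside $\alpha(\bbbs^1)$; contracting along these unique arcs as in Lemma~\ref{T30} shows $\alpha(\bbbs^1)$ is contractible, and thus $\alpha$ extends to $\bar\alpha:\bbbb^2\to\alpha(\bbbs^1)\subset X$. If $\pi\circ\alpha:\bbbs^1\to\bbbs^1$ were homotopic to the identity, Brouwer degree theory would force the open unit disk to lie in $(\pi\circ\bar\alpha)(\bbbb^2)\subset \pi(\alpha(\bbbs^1))$; but both $\alpha$ and $\pi$ are Lipschitz, so $\pi(\alpha(\bbbs^1))$ has Hausdorff dimension at most one and, in particular, zero two-dimensional Hausdorff measure --- a contradiction. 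For (e) $\Rightarrow$ (d), a hypothetical one-to-one Lipschitz $\alpha:\bbbs^1\to X$ satisfies the hypotheses of Proposition~\ref{T23} with $n=1$, yielding a Lipschitz $\pi:X\to\bbbr^2$ with $\pi\circ\alpha:\bbbs^1\to\bbbs^1$ homotopic to the identity, violating (e).

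The main obstacle is the implication (c) $\Rightarrow$ (b), since (c) controls only \emph{rectifiable} arcs while (b) requires uniqueness of \emph{all} arcs. By Lemma~\ref{T27}, (c) already yields that $X$ is geodesic. Given distinct arcs $\Gamma_1,\Gamma_2$ from $x$ to $y$, with $\Gamma_1$ the unique geodesic, I would follow the construction in the proof of Lemma~\ref{T29} to extract points $\tilde{x},\tilde{y}\in\Gamma_1$ and a subarc $\tau\subset\Gamma_2$ with $\tau\cap\Gamma_1=\{\tilde{x},\tilde{y}\}$. If $\tau$ is rectifiable, (c) is contradicted outright by the sub-geodesic of $\Gamma_1$. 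If $\tau$ is non-rectifiable, for any $z\in\tau\setminus\{\tilde{x},\tilde{y}\}$ the concatenation of the geodesics $\sigma_{\tilde{x}z}$ and $\sigma_{z\tilde{y}}$ is a rectifiable curve from $\tilde{x}$ to $\tilde{y}$, and Lemma~\ref{T16} extracts a rectifiable arc from its image; the delicate point --- which I expect to be the crux of the argument --- is to analyze the last/first exits $u,v\in\Gamma_1$ of these two geodesics from $\Gamma_1$ (ruling out in particular the degenerate configuration $u=v$ with both geodesics traversing a single common "stub") in order to exhibit a second rectifiable arc between two points of $\Gamma_1$ genuinely distinct from the sub-geodesic, finally contradicting (c).
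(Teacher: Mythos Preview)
Your outline for (a)$\Leftrightarrow$(b), (b)$\Rightarrow$(c), (c)$\Leftrightarrow$(d), (e)$\Rightarrow$(d), and (a)$\Rightarrow$(e) is essentially the paper's argument (your (a)$\Rightarrow$(e) is slightly more roundabout than necessary --- once $\alpha(\bbbs^1)$ is contractible, $\pi\circ\alpha$ factors through a contractible space and is therefore null-homotopic, so the Hausdorff-dimension detour is not needed --- but it is correct).

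The genuine gap is in (c)$\Rightarrow$(b). Your single-intermediate-point strategy cannot, as stated, rule out the degenerate case $u=v$, and this case is not pathological: under hypothesis~(c), the sub-geodesic of $\sigma_{\tilde{x}z}$ from $\tilde{x}$ to $u$ must coincide with the sub-arc of $\Gamma_1$ from $\tilde{x}$ to $u$ (both are rectifiable arcs between the same endpoints), and likewise for $v$; when $u=v$ the two ``stubs'' from $u$ to $z$ are again forced by~(c) to coincide, so the concatenation $\sigma_{\tilde{x}z}*\sigma_{z\tilde{y}}$ has image exactly $\tilde{\Gamma}\cup(\text{stub})$, and Lemma~\ref{T16} simply returns $\tilde{\Gamma}$. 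Varying $z$ does not obviously help: there is no continuity or monotonicity of $u(z),v(z)$ available, and no evident mechanism forcing $u(z)\neq v(z)$ for \emph{some} $z$.

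The paper's resolution replaces the single point $z$ by a fine chain $\alpha(t_{-N}),\ldots,\alpha(t_N)$ along the offending sub-arc of $A$, chosen so that (i) $d(\alpha(t_i),\alpha(t_{i+1}))<\dist(\alpha(t_i),\tilde{\Gamma})$, which forces each interior connecting geodesic to miss $\tilde{\Gamma}$ entirely, and (ii) $d(\tilde{x},\alpha(t_{-N}))+d(\alpha(t_N),\tilde{y})<\ell(\tilde{\Gamma})$, which forces the two end geodesics together to be too short to cover $\tilde{\Gamma}$. The image $E$ of the concatenated chain therefore omits part of $\tilde{\Gamma}$, so the rectifiable arc extracted from $E$ by Lemma~\ref{T16} is genuinely distinct from $\tilde{\Gamma}$, contradicting~(c). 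This ``many short steps'' idea is precisely the ingredient your proposal is missing.
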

\begin{proof}
It suffices to prove implications
(a)$\Leftrightarrow$(b), (a)$\Rightarrow$(c)$\Rightarrow$(b),  (c)$\Leftrightarrow$(d)$\Leftarrow$(e) and (a)$\Rightarrow$(e)

The implication (a)$\Rightarrow$(b) is obvious. 

To prove (b)$\Rightarrow$(a) we only need to show that $X$ is a geodesic space. Since $X$ is a length space, Corollary~\ref{T17} yields existence of a rectifiable arc connecting $x$ to $y$, $x\neq y$. Clearly, by the assumptions of (b), it must be a unique rectifiable arc connecting $x$ and $y$, so $X$ is a geodesic space by Lemma~\ref{T27}.

The implication (a)$\Rightarrow$(c) is obvious. Indeed, since $X$ is a geodesic space, there is a geodesic connecting $x$ and $y$, $x\neq y$, and by Lemma~\ref{T14}, the image of the geodesic is a rectifiable arc. Since it is the unique arc connecting $x$ and $y$, it is also the unique rectifiable arc connecting $x$ and $y$.

(c)$\Rightarrow$(b) According to Lemma~\ref{T27}, $X$ is a geodesic space. Thus, if $x,y\in X$, $x\neq y$, the image $\Gamma$ of the geodesic connecting $x$ to $y$ is the unique rectifiable arc with endpoints $x$ and $y$. It remains to show that $\Gamma$ is the unique arc with the endpoints $x$ and $y$.

To this end it suffices to show that any arc $A$ connecting $x$ to $y$ is contained in $\Gamma$, $A\subset\Gamma$, because it clearly implies that $A=\Gamma$.

Suppose by way of contradiction that there is an arc $A$ with endpoints $x$ and $y$, and such that $A\setminus\Gamma\neq\varnothing$.

Let $\alpha:[0,1]\to A$ be a homeomorphism, $\alpha(0)=x$, $\alpha(1)=y$. Since $A$ is not contained in $\Gamma$, there is $t_0\in (0,1)$, such that $\alpha(t_0)\not\in\Gamma$.
We can find $a\in [0,t_0)$ and $b\in (t_0,1]$ such that
$$
\tilde{x}:=\alpha(a)\in\Gamma,
\quad
\tilde{y}:=\alpha(b)\in\Gamma,
\quad
\text{and}
\quad
\alpha(a,b)\cap\Gamma=\varnothing.
$$
Denote the sub-arc of $\Gamma$ with endpoints $\tilde{x}$ and $\tilde{y}$ by $\tilde{\Gamma}$.
Choose
$$
a<t_{-N}<t_{-N+1}<\ldots<t_0<\ldots<t_N<b,
$$
such that
\begin{equation}
\label{eq15}
d(\alpha(t_i),\alpha(t_{i+1}))<\dist(\alpha(t_i),\tilde{\Gamma})
\quad
\text{for $i=-N,\ldots,N-1$,}
\end{equation}
and
\begin{equation}
\label{eq16}
d(\tilde{x},\alpha(t_{-N}))+d(\alpha(t_N),\tilde{y})<d(\tilde{x},\tilde{y})=\ell(\tilde{\Gamma}).
\end{equation}
Next, we connect consecutive points 
$\tilde{x}$, $\alpha(t_{-N})$, $\alpha(t_{-N+1})$, \ldots, $\alpha(t_N)$, $\tilde{y}$, by geodesics (we already proved that $X$ is a geodesic space).

It follows from \eqref{eq15} that the geodesics connecting $\alpha(t_i)$ to $\alpha(t_{i+1})$ do not intersect with $\tilde{\Gamma}$. Since by \eqref{eq16},  the sum of lengths of geodesics connecting $\tilde{x}$ to $\alpha(t_{-N})$ and $\alpha(t_N)$ to $\tilde{y}$ is less than $\ell(\tilde{\Gamma})$, $\tilde{\Gamma}$ is not contained in the image of these two geodesics. 

Let $\eta$ be a rectifiable curve connecting $\tilde{x}$ to $\tilde{y}$ obtained by concatenation of the geodesics constructed above. Let $E$ be the image of $\eta$. As we observed above, $\tilde{\Gamma}$ is not contained in~$E$. 

According to Lemma~\ref{T16}, there is a rectifiable arc inside $E$ connecting $\tilde{x}$ to $\tilde{y}$. Since $\tilde{\Gamma}\setminus E\neq\varnothing$, this arc must be different than $\tilde{\Gamma}$ and we arrived to a contradiction with the uniqueness of a rectifiable arc connecting $\tilde{x}$ and $\tilde{y}$. The proof is complete. 

The implication (c)$\Rightarrow$(d) is obvious:  assume, by contradiction, that $X$  is a length space and $\alpha:\bbbs^1\to X$ is one-to-one and Lipschitz. Then the images of the upper and the lower semicircles are distinct rectifiable arcs connecting the same endpoints.

Similarly, the implication  (d)$\Rightarrow$(c) follows by contradiction from Lemma~\ref{T29}.

The implication (e)$\Rightarrow$(d) follows by contradiction from Proposition~\ref{T23}.

Finally, we prove the implication (a)$\Rightarrow$(e). Suppose that $X$ is a metric tree and Lipschitz maps $\alpha:\bbbs^1\to X$ and $\pi:X\to\bbbr^2$ are such that $\pi\circ \alpha$ maps $\bbbs^1$ to $\bbbs^1\subset\bbbr^2$. It is easy to see that the image $\alpha(\bbbs^1)\subset X$ is also a metric tree. 
Indeed, any two points $x,y\in\alpha(\bbbs^1)$, $x\neq y$, can be connected by a rectifiable curve and hence by a rectifiable arc inside $\alpha(\bbbs^1)$ (see Lemma~\ref{T16}). Clearly, this is a unique arc connecting $x$ and $y$, and its length equals $d(x,y)$, because $X$ is a metric tree. Hence $\alpha(\bbbs^1)$ is a metric tree by definition.
Therefore, $\alpha(\bbbs^1)$ is contractible by Lemma~\ref{T30}, and hence the map $\pi\circ\alpha:\bbbs^1\to\bbbs^1$ is homotopic to a constant map, so it is not homotopic to the identity map.
\end{proof}

The next lemma is, in fact, an easy exercise, but we include it for the sake of completeness.

\begin{lemma}\label{L12}
Assume $(X,d)$ is a metric tree and $T\subset X$ is a closed, non-empty metric tree. Then there is a 1-Lipschitz retraction $r:X\to T$, i.e., $r|_T=\id_T$, $d(r(x),r(y))\leq d(x,y)$. Moreover, for all $x\in X$ we have $d(x,r(x))=\dist(x,T)$.
\end{lemma}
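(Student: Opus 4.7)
The plan is to define $r(x)$ as the unique nearest point of $T$ to $x$, obtained as the first $T$-point along a geodesic from $x$ to any element of $T$. Fix $x\in X$ and pick any $t\in T$; on the unique geodesic $[x,t]$, the set $T\cap[x,t]$ is closed and contains $t$, so it has a first point $r(x)$ in the arc-length parametrization starting at $x$. To show that $r(x)$ is independent of $t$, I would invoke the standard median property in a metric tree: for any three points $x,t_1,t_2$, the arcs $[x,t_1]$ and $[x,t_2]$ share an initial sub-arc $[x,m]$ with $m\in[t_1,t_2]$. This follows from the uniqueness of arcs in $X$ (reversed initial segments from any common point back to $x$ must coincide, so the two arcs agree up to a branching point $m$, and never meet again afterwards). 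Since $T$ is a subtree, $[t_1,t_2]\subset T$, so $m\in T$, and the first $T$-point of $[x,t_1]$ and of $[x,t_2]$ agree. The identities $r|_T=\id_T$ and $d(x,r(x))=\dist(x,T)$ then follow immediately: for $x\in T$ take $t=x$; for arbitrary $t\in T$, $r(x)\in[x,t]$ gives $d(x,r(x))\le d(x,t)$, while $r(x)\in T$ gives the reverse.

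For the 1-Lipschitz estimate, the key geometric fact is that $[x,r(y)]$ passes through $r(x)$ and, symmetrically, $[y,r(x)]$ through $r(y)$. To see the first, let $m$ be the median of $x,r(x),r(y)$. Since $m\in[r(x),r(y)]\subset T$, we have $m\in T$; but $m\in[x,r(x)]$, and no interior point of $[x,r(x)]$ can belong to $T$, for such a point would be closer to $x$ than $r(x)$, contradicting $d(x,r(x))=\dist(x,T)$. Hence $m=r(x)$, and $[x,r(y)]=[x,r(x)]\cup[r(x),r(y)]$, so
\[
d(x,r(y))=d(x,r(x))+d(r(x),r(y)).
\]
By the symmetric argument, $d(y,r(x))=d(y,r(y))+d(r(x),r(y))$. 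Combining these two identities with the triangle inequalities $d(x,r(y))\le d(x,y)+d(y,r(y))$ and $d(y,r(x))\le d(x,y)+d(x,r(x))$ and adding, the terms $d(x,r(x))$ and $d(y,r(y))$ cancel from both sides, leaving $2\,d(r(x),r(y))\le 2\,d(x,y)$, as required.

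The main obstacle is purely in setting up the median argument: one must justify that three points in a metric tree admit a common ``Steiner point'' through which all three pairwise geodesics pass. This is a routine consequence of the uniqueness of arcs together with Lemma~\ref{T16}, but it is the one step where the tree structure is used in an essential way. Once the median point is available, the rest of the proof is bookkeeping with the triangle inequality, and in particular no local-compactness or existence-of-nearest-point hypothesis on $T$ is needed beyond closedness, because $r(x)$ is produced directly from the geodesic $[x,t]$.
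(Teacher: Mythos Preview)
Your proposal is correct and follows essentially the same route as the paper: define $r(x)$ as the first $T$-point on a geodesic from $x$ into $T$, use uniqueness of arcs (equivalently, the median property) to show this is well defined and realizes $\dist(x,T)$, and then exploit the tree structure for the $1$-Lipschitz bound. The only cosmetic difference is in that last step: the paper records the stronger observation that for any $x,y$ one has either $r(x)=r(y)$ or both $r(x),r(y)\in\langle x,y\rangle$ (stated as ``easy'' and left to the reader), which gives $d(r(x),r(y))\le d(x,y)$ immediately, whereas you use the weaker identity $r(x)\in[x,r(y)]$ together with the symmetric one and a summed-triangle-inequality cancellation; both rest on the same median fact you correctly isolate as the crux.
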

\begin{proof}
To simplify the notation, whenever $x,y\in X$, we denote the (unique) arc connecting $x$ and $y$ in $X$ by $\la x,y\ra$. We can clearly assume that $T\varsubsetneq X$.

To prove the existence of the retraction we first establish the following auxiliary facts:

For any $x\in X$ there is a unique $t_x\in T$ such that
\begin{enumerate}[1)]
\item $\la t_x,x\ra\cap T=\{t_x\}$,
\item for any $y\in T$ we have $t_x\in \la y,x\ra$,
\item $\dist(x,T)=d(x,t_x)$.
\end{enumerate}
If $x\in T$, obviously $t_x=x$ is the only point satisfying 1), 2), and 3), so assume that $x\not \in T$.

To prove the existence of $t_x$ satisfying 1), pick  any $y\in T$ and let $\gamma:[0,1]\to X$ parameterize the arc $\la y,x\ra$ from $y$ to $x$. The set $\la y,x\ra\cap T$ is closed, so if $s=\sup\{s'\in[0,1]~:~\gamma(s')\in T\}$, then $t_x=\gamma(s)$ is in $T$, and $\gamma([s,1])= \la  t_x,x\ra$ intersects with $T$ only at~$t_x$.

Uniqueness of $t_x$ follows immediately from uniqueness of arcs in $X$: assume that for some $x\in X$ there are two distinct $t_x$ and $t_x'$ satisfying 1). Then there are two distinct arcs connecting $x$ to $t_x$: $\la x, t_x \ra$ and $\la x,t_x'\ra \cup \la t_x',t_x\ra$, which is a contradiction.
Note that $\la x,t_x'\ra \cup \la t_x',t_x\ra$ is an arc, because $\la x,t_x'\ra$ meets $T$ at $t_x'$ only and $\la t_x',t_x\ra\subset T$, since  $T$ is a metric tree.

Now, 2) follows from the construction of $t_x$: we constructed it as a point on an arc $\la y,x\ra$ for any $y\in T$ and proved that any choice of $y$ yields the same (unique) $t_x$; 3) follows immediately from 2).

Finally, we set $r(x)=t_x$ and note that $d(r(x),r(y))\leq d(x,y)$ is an immediate consequence of the easy observation that for any $x,y\in X$ we either have $r(x),r(y)\in \la x,y\ra$ or $r(x)=r(y)$. Also, by 3), $d(x,r(x))=\dist(x,T)$.
\end{proof}

\section{Derivatives of Lipschitz mappings}
\label{S5}
We assume that the reader is familiar with basic results about differentiability of Lipschitz mappings, like for example Rademacher's theorem. Lemmata~\ref{T22} and \ref{LR} below are well known. The other two lemmata, while possibly known to specialists, seem to be missing in the literature, and the only relevant reference we are aware of is \cite{EsmayliH}.

For a proof of the following result see e.g., \cite[Theorem~3.8]{EvansG}.
\begin{lemma}
\label{T22}
If $f:\bbbb^n\to\bbbr^n$ is Lipschitz continuous, then the measure of the image of $f$ is bounded by the integral of the Jacobian:
$$
|f(\bbbb^n)|\leq\int_{\bbbb^n}|\det Df(x)|\, dx.
$$
\end{lemma}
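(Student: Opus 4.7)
The plan is to establish the bound via Rademacher's theorem combined with a Vitali covering argument that dominates $f(\bbbb^n)$ locally by the linearization $Df(x)$.

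First, Rademacher's theorem gives differentiability almost everywhere on $\bbbb^n$; let $D$ denote the set of differentiability points. Since $f$ is $L$-Lipschitz, $|f(\bbbb^n\setminus D)|=0$, so it suffices to bound $|f(D)|$. Split $D=D_0\cup D_1$, where $D_0=\{x\in D:\det Df(x)=0\}$ and $D_1=D\setminus D_0$.

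For the critical set $D_0$, I would fix $\eta>0$ and observe that at each $x\in D_0$ the image $Df(x)(\bbbr^n)$ lies in a hyperplane $H_x$, so differentiability of $f$ at $x$ yields arbitrarily small radii $r>0$ with
$$
f(\bbbb^n(x,r))\subset\bbbb^n(f(x),Lr)\cap\{y\in\bbbr^n:\dist(y,f(x)+H_x)<\eta r\}.
$$
The right-hand side sits inside a cylindrical slab of $n$-volume at most $c_n L^{n-1}\eta r^n$. The collection of such balls is a Vitali cover of $D_0$; applying the $5r$-covering lemma extracts a disjoint countable subfamily $\{\bbbb^n(x_i,r_i)\}$ with $D_0\subset\bigcup_i\bbbb^n(x_i,5r_i)$, and summing the slab bounds gives $|f(D_0)|\leq C_{n,L}\,\eta$. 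Letting $\eta\to 0^+$ forces $|f(D_0)|=0$.

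For $D_1$, fix $\eta>0$ and restrict to the full-measure subset $D_1'\subset D_1$ of Lebesgue points of $|\det Df|$. At each $x\in D_1'$, differentiability of $f$ (with $Df(x)$ invertible) produces $r_x>0$ so that for all $0<r\leq r_x$ the affine linearization approximates $f$ well enough on $\bbbb^n(x,r)$ to yield the two estimates
$$
|f(\bbbb^n(x,r))|\leq(1+\eta)|\det Df(x)|\,|\bbbb^n(x,r)|,
$$
$$
|\det Df(x)|\,|\bbbb^n(x,r)|\leq(1+\eta)\int_{\bbbb^n(x,r)}|\det Df(y)|\,dy,
$$
the first because the linear image of a ball has volume exactly $|\det Df(x)|\,|\bbbb^n(x,r)|$, the second by Lebesgue differentiation. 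The Vitali covering theorem then produces a disjoint family $\{\bbbb^n(x_i,r_i)\}$ of such balls in $\bbbb^n$ whose union covers $D_1'$ up to a null set; combining the two inequalities and using disjointness yields
$$
|f(D_1)|\leq\sum_i|f(\bbbb^n(x_i,r_i))|\leq(1+\eta)^2\int_{\bbbb^n}|\det Df(y)|\,dy,
$$
and $\eta\to 0^+$ closes the proof.

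The main technical obstacle is the critical-set step $|f(D_0)|=0$: one must coordinate the Vitali selection so that the first-order slab thickness $\eta r$ along the normal to $H_x$ accumulates controllably rather than being swamped by the Lipschitz cross-section of size $Lr$, and the direction of $H_x$ varies with $x$, which is what forces the two-sided (cylinder $\cap$ ball) inclusion above rather than a single-slab argument.
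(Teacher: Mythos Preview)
The paper does not prove this lemma; it simply cites Evans--Gariepy, where the inequality falls out of the area formula $\int_{\bbbr^n}\card\big(f^{-1}(y)\cap\bbbb^n\big)\,dy=\int_{\bbbb^n}|\det Df|$ via the trivial bound $|f(\bbbb^n)|\leq\int\card\big(f^{-1}(y)\cap\bbbb^n\big)\,dy$. Your direct argument is correct and is in effect the engine behind that area formula, applied to the coarser quantity $|f(\cdot)|$ rather than to the multiplicity function.

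Two minor points. In the $D_0$ step, the $5r$-lemma hands you $D_0\subset\bigcup_i\bbbb^n(x_i,5r_i)$, so to bound $|f(D_0)|$ you need the slab inclusion at radius $5r_i$, not $r_i$; this is harmless (differentiability gives it for all sufficiently small $r$, so restrict the fine cover to $5r\leq r_x$, or simply use the Vitali covering theorem here as you already do for $D_1$), but it should be stated. And your closing paragraph overstates the obstacle: the varying direction of $H_x$ causes no trouble, because each slab bound is purely local and the global control comes solely from the packing inequality $\sum_i|\bbbb^n(x_i,r_i)|\leq|\bbbb^n|$ for the disjoint family---no alignment between slabs is ever needed.
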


\begin{lemma}
\label{T11}
Suppose that the mappings $f:\Omega\to\bbbr^m$ and $g:\Omega\to\bbbr^k$ are locally Lipschitz continuous, where $\Omega\subset\bbbr^n$ is open. 
If there is a constant $L>0$ such that for every rectifiable curve $\gamma:[a,b]\to\Omega$ we have
$$
\ell(g\circ\gamma)\leq L\ell(f\circ\gamma),
$$
then for almost every $x\in\Omega$, we have $|Dg(x)v|\leq L|Df(x)v|$ for all $v\in\bbbs^{n-1}$, and hence $\rank Dg\leq\rank Df$ almost everywhere in $\Omega$.
\end{lemma}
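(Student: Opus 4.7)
The plan is to test the hypothesis on straight-line segments in a fixed direction $v\in\bbbs^{n-1}$, then apply Lebesgue differentiation and Fubini to extract the pointwise inequality almost everywhere, and finally upgrade from a dense set of directions to all directions by continuity.

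Let $A\subset\Omega$ be the set of full measure on which both $f$ and $g$ are differentiable (Rademacher). Fix $v\in\bbbs^{n-1}$ and consider affine lines $L=x_0+\bbbr v$. For every $x_0\in\Omega$ and every sufficiently small $t>0$ the segment $\gamma(s)=x_0+sv$, $s\in[0,t]$, lies in $\Omega$, and $f\circ\gamma$, $g\circ\gamma$ are Lipschitz curves on $[0,t]$, so their lengths equal $\int_0^t |(f\circ\gamma)'(s)|\,ds$ and $\int_0^t |(g\circ\gamma)'(s)|\,ds$ respectively (these derivatives exist for a.e.~$s\in[0,t]$). The hypothesis gives
\[
\int_0^t |(g\circ\gamma)'(s)|\,ds \;\leq\; L\int_0^t |(f\circ\gamma)'(s)|\,ds.
\]
By a Fubini argument, for almost every line $L$ in direction $v$ (w.r.t.~the $(n-1)$-dimensional Lebesgue measure on $v^\perp$), almost every point $x_0\in L\cap\Omega$ lies in $A$, and at such a point the directional derivatives $\partial_v f(x_0)=Df(x_0)v$ and $\partial_v g(x_0)=Dg(x_0)v$ coincide with the classical derivatives of $s\mapsto f(x_0+sv)$ and $s\mapsto g(x_0+sv)$ at $s=0$.

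Applying the above integral inequality on arbitrarily short intervals $[0,t]$ and dividing by $t$, Lebesgue's differentiation theorem (applied to the integrable functions $s\mapsto |(f\circ\gamma)'(s)|$ and $s\mapsto |(g\circ\gamma)'(s)|$) gives, for almost every $x_0$ on almost every such line,
\[
|Dg(x_0)v| \;\leq\; L\,|Df(x_0)v|.
\]
Another application of Fubini then yields a full-measure set $A_v\subset\Omega$ on which this inequality holds. Choose a countable dense set $\{v_k\}\subset\bbbs^{n-1}$ and set $A^\ast=A\cap\bigcap_k A_{v_k}$, which is still of full measure. At any $x\in A^\ast$ the linear maps $v\mapsto Df(x)v$ and $v\mapsto Dg(x)v$ are continuous in $v$, so the inequality $|Dg(x)v|\leq L|Df(x)v|$ extends from the dense set $\{v_k\}$ to every $v\in\bbbs^{n-1}$.

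Finally, the rank estimate is immediate: at $x\in A^\ast$, if $w\in\bbbr^n\setminus\{0\}$ satisfies $Df(x)w=0$, then $|Dg(x)w|\leq L|Df(x)w|=0$, so $\ker Df(x)\subset\ker Dg(x)$ and therefore $\rank Dg(x)\leq \rank Df(x)$.

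The main obstacle I anticipate is the careful Fubini/Lebesgue-differentiation bookkeeping needed to justify that for almost every $x_0\in\Omega$ the classical directional derivative of $f$ (and $g$) along $v$ at $x_0$ both exists and equals the derivative of the one-variable restriction $s\mapsto f(x_0+sv)$, so that the integral inequality on segments can indeed be differentiated at $s=0$. Once the ACL-style reduction to lines is in place, the rest is a standard density-and-continuity argument on $\bbbs^{n-1}$.
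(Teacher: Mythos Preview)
Your proposal is correct and follows essentially the same route as the paper: restrict to straight segments in a fixed direction, use the length formula for Lipschitz curves, apply Lebesgue differentiation along the segment, invoke Fubini to pass from almost every line to almost every point, and then intersect over a countable dense set of directions and use continuity of $v\mapsto Df(x)v$, $v\mapsto Dg(x)v$ to reach all $v\in\bbbs^{n-1}$. The paper additionally localizes at the outset to a ball on which $f$ and $g$ are globally Lipschitz, which streamlines the bookkeeping you flag as the main obstacle; your explicit kernel-inclusion remark for the rank conclusion is a nice touch the paper leaves implicit.
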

\begin{proof}
Since the problem is local in nature, we may assume that $\Omega=\bbbb^n$, and that the mappings $f$ and $g$ are Lipschitz continuous.

Let $\{ v_i\}_{i=1}^\infty\subset\bbbs^{n-1}$ be a countable and dense subset. It suffices to prove that for almost all $x\in\bbbb^n$ we have
\begin{equation}
\label{eq10}
|Dg(x)v_i|\leq L|Df(x)v_i|
\quad
\text{for all $i=1,2,\ldots$}
\end{equation}
Indeed, by a density argument it will imply that 
$|Dg(x)v|\leq L|Df(x)v|$
for all $v\in\bbbs^{n-1}$.

Fix $i\in\bbbn$. It suffices to prove that 
\begin{equation}
\label{eq11}
|Dg(x)v_i|\leq L |Df(x)v_i|
\quad
\text{for almost all $x\in\bbbb^n$.}
\end{equation}
Indeed, since the union of countably many sets of measure zero has measure zero, \eqref{eq10} will follow.

For almost every line $\ell$ parallel to $v_i$, both functions $f$ and $g$ are differentiable at almost all points of $\ell$. Fix such a line $\ell$ and for $z\in\ell$ define
$\gamma_z(t)=z+tv_i$. Let $I\subset\bbbr$ be the open interval consisting of all $t$ such that $\gamma_z(t)\in\bbbb^n$.

Since the functions $g\circ\gamma_z$ and $f\circ\gamma_z$ are Lipschitz continuous on $I$, for $s,t\in I$ we have
$$
\int_s^t\Big|\frac{d}{d\tau}(g\circ\gamma_z)(\tau)\Big|\, d\tau=
\ell\big((g\circ\gamma_z)|_{[s,t]}\big)\leq
L \ell\big((f\circ\gamma_z)|_{[s,t]}\big)=
L\int_s^t\Big|\frac{d}{d\tau}(f\circ\gamma_z)(\tau)\Big|\, d\tau.
$$
Since the functions $f$ and $g$ are differentiable at almost all points of $\ell$, the chain rule yields
$$
\int_s^t |Dg(z+\tau v_i)v_i|\, d\tau\leq
L \int_s^t |Df(z+\tau v_i)v_i|\, d\tau.
$$
Now it follows from the Lebesgue differentiation theorem that for almost all $s\in I$ we have 
$$
|Dg(z+sv_i)v_i|\leq L|Df(z+sv_i)v_i|.
$$
We proved that inequality \eqref{eq11} is true for almost all lines $\ell$ parallel to $v_i$ and for almost all $x\in\ell\cap\bbbb^n$. Therefore it follows from Fubini's theorem that \eqref{eq11} is true for almost all $x\in\bbbb^n$.
\end{proof}
The next lemma is a well known consequence of Brouwer's theorem (c.f. \cite[Lemma~7.23]{Rudin}):
\begin{lemma}
\label{LR}
Assume $F:\overbar{\bbbb}^n(0,\rho)\to \bbbr^n$ is  continuous and satisfies ${|F(x)-x|<\rho/2}$ whenever $|x|=\rho$. Then $\overbar{\bbbb}^n(0,\rho/2)\subset F(\overbar{\bbbb}^n(0,\rho))$. 
\end{lemma}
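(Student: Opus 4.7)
This is a classical consequence of Brouwer's fixed point theorem, and I will argue by contradiction. Fix $y\in\overbar{\bbbb}^n(0,\rho/2)$ and suppose toward a contradiction that $y\notin F(\overbar{\bbbb}^n(0,\rho))$. Then $F(x)-y\neq 0$ for every $x$ in the closed ball, so I can form the continuous self-map
$$
G(x)=-\rho\,\frac{F(x)-y}{|F(x)-y|}\colon\overbar{\bbbb}^n(0,\rho)\to\overbar{\bbbb}^n(0,\rho),
$$
whose image in fact lies on the sphere of radius $\rho$. By Brouwer's theorem $G$ has a fixed point $x_0$, which then necessarily satisfies $|x_0|=\rho$, bringing us into the regime where the hypothesis $|F(x_0)-x_0|<\rho/2$ is available.

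The fixed-point identity $x_0=G(x_0)$ rearranges to $F(x_0)-y=-|F(x_0)-y|\,x_0/\rho$. Taking the Euclidean inner product with $x_0$ and using $|x_0|=\rho$, $|y|\leq\rho/2$, and $|F(x_0)-y|>0$ yields the upper bound
$$
F(x_0)\cdot x_0=y\cdot x_0-|F(x_0)-y|\,\rho<\frac{\rho^2}{2},
$$
where the strict inequality comes from $y\neq F(x_0)$, which is built into the contradiction hypothesis.

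On the other hand, the hypothesis $|F(x_0)-x_0|<\rho/2$ gives $|F(x_0)|>|x_0|-\rho/2=\rho/2$, and the polarization identity
$$
F(x_0)\cdot x_0=\tfrac{1}{2}\bigl(|F(x_0)|^2+|x_0|^2-|F(x_0)-x_0|^2\bigr)
$$
produces the lower bound $F(x_0)\cdot x_0>\tfrac{1}{2}(\rho^2/4+\rho^2-\rho^2/4)=\rho^2/2$. The two bounds contradict each other, so $y$ must lie in the image, which is what was to be shown.

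There is no real obstacle in this argument; the only thing to keep track of is why every inequality that needs to be strict is strict. The hypothesis $y\notin F(\overbar{\bbbb}^n(0,\rho))$ supplies $|F(x_0)-y|>0$, and the hypothesis $|F(x_0)-x_0|<\rho/2$ supplies $|F(x_0)|>\rho/2$; once these are in hand the two elementary estimates do the job. (One could alternatively package the contradiction as a retraction argument: composing $G$ with the identity on the sphere would produce a continuous retraction of $\overbar{\bbbb}^n(0,\rho)$ onto its boundary, which is impossible; but the direct Brouwer argument is cleaner.)
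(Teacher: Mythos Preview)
Your proof is correct and follows essentially the same approach as the paper: both argue by contradiction, apply Brouwer's theorem to the map $G(x)=\rho\,(y-F(x))/|y-F(x)|$, and derive a contradiction at the fixed point on the boundary sphere. The only difference is cosmetic: the paper reaches the contradiction in one line by showing $x\cdot(y-F(x))<0$ directly, whereas you obtain it by sandwiching $F(x_0)\cdot x_0$ between an upper bound (from the fixed-point equation) and a lower bound (from the polarization identity and $|F(x_0)|>\rho/2$); these are equivalent rearrangements of the same estimate.
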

\begin{proof}
By contradiction, if there exists $a\in \overbar{\bbbb}^n(0,\rho/2)\setminus F(\overbar{\bbbb}^n(0,\rho))$, then 
$$
G(x)=\rho\, \frac{a-F(x)}{|a-F(x)|}
$$ 
is continuous and maps $\overbar{\bbbb}^n(0,\rho)$ to itself without a fixed point, which contradicts Brou\-wer's theorem. Indeed, any fixed point would have to lie on the sphere $\bbbs^{n-1}(0,\rho)$, but if $|x|=\rho$, then 
$$
x\cdot (a-F(x))=x\cdot(a+x-F(x))-|x|^2\leq |x|(|a|+|x-F(x)|)-\rho^2<\rho^2-\rho^2=0,
$$
so $x\cdot G(x)<0$ and hence $x\neq G(x)$.
\end{proof}
\begin{lemma}
\label{T21}
Let $\Omega\subset\bbbr^n$, $U\subset\bbbr^m$ be open, and let $g:\Omega\to U$, $f:U\to\bbbr^k$ be Lipschitz continuous. If $\rank Df\leq r$ almost everywhere in $U$, then $\rank D(f\circ g)\leq r$ almost everywhere in $\Omega$.
\end{lemma}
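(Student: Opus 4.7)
My plan is to combine the chain rule on the regular part of $\Omega$ with a reduction to a ``square'' composition on the set $g^{-1}(N)$, where $N\subset U$ denotes the null set of points at which $f$ either fails to be differentiable or has $\rank Df > r$. By Rademacher's theorem, $g$, $f$, and $f\circ g$ are differentiable almost everywhere; on the good set $\Omega\setminus g^{-1}(N)$ the chain rule $D(f\circ g)(x)=Df(g(x))\,Dg(x)$ gives $\rank D(f\circ g)(x)\leq\rank Df(g(x))\leq r$ immediately. The essential work is to control $\rank D(f\circ g)$ on $g^{-1}(N)$, which can have positive measure because Lipschitz maps need not preserve null sets.

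Since the condition $\rank D(f\circ g)(x)\leq r$ is equivalent to the vanishing of every $(r+1)\times(r+1)$ minor of $D(f\circ g)(x)$, and there are only finitely many such minors, I fix a coordinate $(r+1)$-subspace $V\subset\bbbr^n$ and coordinate projection $\pi\colon\bbbr^k\to\bbbr^{r+1}$ and aim to show that the corresponding minor $\det D((\pi\circ f\circ g)|_{x+V})(x)$ vanishes for a.e.\ $x\in\Omega$. By Fubini applied to the foliation of $\Omega$ by translates of $V$, this reduces to proving, for a.e.\ affine slice $L=x_0+V$, that the Lipschitz map
\[
H:=(\pi\circ f\circ g)|_L\colon L\cap\Omega\to\bbbr^{r+1}
\]
between Euclidean spaces of the same dimension $r+1$ satisfies $\det DH=0$ a.e.\ on $L\cap\Omega$. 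I would argue by contradiction: if $\det DH\neq 0$ on a set $E\subset L\cap\Omega$ of positive $(r+1)$-measure, then at a density point $x_0\in E$ at which $H$ is differentiable and $\det DH(x_0)\neq 0$, the inverse-function-theorem argument of Lemma~\ref{LR} yields a small ball around $x_0$ whose $H$-image contains an $(r+1)$-ball of positive $\bbbr^{r+1}$-measure. Since $H(L\cap\Omega)\subset(\pi\circ f)(U)$, this contradicts the key estimate $|(\pi\circ f)(U)|_{\bbbr^{r+1}}=0$; combined with Lemma~\ref{T22}, the resulting vanishing of the image measure forces $\det DH=0$ a.e.\ on $L\cap\Omega$, as required.

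The main obstacle is the key estimate: for $h=\pi\circ f\colon U\to\bbbr^{r+1}$ Lipschitz with $\rank Dh\leq r$ a.e., prove $|h(U)|_{\bbbr^{r+1}}=0$. When $\dim U=r+1$ this is immediate from Lemma~\ref{T22} because $\det Dh=0$ a.e., and when $\dim U<r+1$ it is trivial since a Lipschitz image of a set of Hausdorff dimension $<r+1$ has zero $(r+1)$-measure. The delicate case is $\dim U>r+1$, which is a Sard-type theorem of Federer for Lipschitz maps: foliate $U$ by $(r+1)$-dimensional affine slices $L'$; for a.e.\ such slice the rank hypothesis passes through Fubini and Lemma~\ref{T22} gives $|h(L'\cap U)|_{\bbbr^{r+1}}=0$, and the full estimate $|h(U)|=0$ then follows by a careful Lipschitz control of the image of the null set on which the hypothesis fails, together with a covering argument to assemble the slice-wise conclusions into a global one. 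This gluing step is where the full strength of the rank hypothesis is required and constitutes the most technical part of the argument.
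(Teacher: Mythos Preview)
Your reduction to the ``key estimate'' $|(\pi\circ f)(U)|_{\bbbr^{r+1}}=0$ is sound, but the proposed proof of that estimate has a genuine gap that is not merely technical. Slicing $U$ by $(r+1)$-planes and applying Lemma~\ref{T22} gives $|h(L'\cap U)|=0$ for \emph{almost every} slice $L'$, and you then need that an \emph{uncountable} union of null sets is null. No ``covering argument'' or ``Lipschitz control of the image of the exceptional null set'' fixes this: the exceptional set of parameters $v$ is null in $\bbbr^{m-r-1}$, but $h$ sends the corresponding $\bbbr^m$-null set only to an $\mathcal H^m$-null set in $\bbbr^{r+1}$, which says nothing about its $(r+1)$-measure when $m>r+1$. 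In fact, the key estimate is essentially equivalent in difficulty to Lemma~\ref{T21} itself, so your reduction does not make progress.

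The paper's proof avoids the global image bound entirely and works in the opposite direction. Starting from a single point $x_o$ where an $(r+1)\times(r+1)$ minor of $D(f\circ g)$ is nonzero, one exploits the \emph{differentiability of $g$ at $x_o$}: after normalizing so that $G=g|_{\text{slice}}$ satisfies $G(x)\approx(x,0)$ and $F\circ G(x)\approx x$, Lipschitz continuity of $F=\pi\circ f$ propagates this to $|F(x,y)-x|<\rho/2$ for \emph{every} $y$ in a small ball, not just almost every $y$. Lemma~\ref{LR} then gives $\overbar{\bbbb}^{r+1}(0,\rho/2)\subset F(\overbar{\bbbb}^{r+1}(0,\rho)\times\{y\})$ for every such $y$, so Lemma~\ref{T22} yields $\rank DF=r+1$ on a set of positive $(r+1)$-measure in every slice, and Fubini (now run forward, with no exceptional set) gives the contradiction in $U$. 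The crucial move you are missing is using $Dg(x_o)$ to transfer the ball-covering from a single slice of $\Omega$ to \emph{all} nearby slices of $U$; this is exactly what circumvents the uncountable-union obstacle.
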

If $f\in C^1$ satisfies $\rank Df\leq r$ everywhere in $U$, then the lemma is an obvious consequence of the chain rule. However, if $f$ is Lipschitz continuous only, the lemma is very far from being obvious, because the image of $g$ might be contained in the set where $f$ is not differentiable and the chain rule cannot be applied. 

Essentially, Lemma \ref{T21} is Proposition~3.16 in \cite{EsmayliH}. There, however, the result is considered and proved for a much more general concept of the metric derivative, so while the proof in \cite{EsmayliH} quickly reduces to the Euclidean setting, we present it for the reader's convenience, avoiding the (unnecessary here) general metric space setting.

\begin{proof}
Assume, by the way of contradiction, that $\rank D(f\circ g)\geq r+1$ on a set of positive measure in $\Omega$. Then, we can find a positive measure set $E\subset\Omega$ and an $(r+1)\times(r+1)$ minor of $D(f\circ g)$ such that this minor is non-zero in $E$. Without loss of generality we may assume that the minor corresponds to the first $(r+1)$ coordinates, both in the domain and in the image, so that
\begin{equation}
\label{eP1}
\det \left(\frac{\partial (f_i\circ g)}{\partial x_j}(x)\right)_{1\leq i,j\leq r+1}\neq 0 \qquad \text{ for }x\in E.
\end{equation}

In what follows, we shall restrict our attention to these $(r+1)$ coordinates. First, we set $F=(f_1,\ldots,f_{r+1}):U\to\bbbr^{r+1}$. Showing that $\rank DF=r+1$ on a set of positive measure would imply $\rank Df=r+1$ on a set of positive measure, leading to a contradiction.

Next, we pick $x_o\in E$ such that $g$ is differentiable at $x_o$; without loss of generality we may assume that $x_o=0$ and $g(0)=0$, and likewise $F(0)=0$.

Then, we restrict $g$ to $\Omega'=\Omega\cap H$, where $H$ is the linear subspace spanned by the first $(r+1)$ coordinates (note that $x_o=0\in \Omega'$), setting $G=g|_{\Omega'}:\Omega'\to U$.
Then
\begin{equation}\label{eP2}
\det D(F\circ G)(0)\neq 0.
\end{equation}
Since $F$ is Lipschitz and $G$ differentiable at $0$, \eqref{eP2} implies that $\rank DG(0)=r+1$. Indeed, if $L$ is the Lipschitz constant of $F$, then all the directional derivatives of $F\circ G$ at~$0$ satisfy $|D_v(F\circ G)(0)|\leq  L|D_v G(0)|$, and thus 
$$
r+1=\rank D(F\circ G)(0)\leq \rank DG(0)\leq r+1.
$$

To simplify the setting, we post-compose $G$ with a linear isomorphism of $\bbbr^m$ to have for all $v\in \bbbr^{r+1}$,
$$
v= (v_1,\ldots,v_{r+1})\xmapsto{~~DG(0)~~}(v_1,\ldots,v_{r+1},0,\ldots,0)=(v,0)\in \bbbr^{r+1}\times \bbbr^{m-r-1}=\bbbr^m 
$$
and since $D(F\circ G)(0):\bbbr^{r+1}\to\bbbr^{r+1}$ is an isomorphism, we may post-compose $F$ with a linear isomorphism of $\bbbr^{r+1}$ to have $D(F\circ G)(0)=\id$.

Then $G(x)=(x,0)+o(|x|)$ and $(F\circ G)(x)=x+o(|x|)$. Thus,  we may find $\rho>0$ such that $\overbar{\bbbb}^{r+1}(0,\rho)\times \overbar{\bbbb}^{m-r-1}(0,\rho/6L)\subset U$ and whenever $|x|=\rho$, 
$$
|(F\circ G)(x)-x|<\frac{\rho}{6} 
\quad
\text{and}
\quad
|G(x)-(x,0)|<\frac{\rho}{6L}.
$$
For any $y$ with $|y|<\rho/6L$ and $|x|=\rho$ we have
\begin{equation*}
    \begin{split}
       |F(x,y)-x|&\leq |F(x,y)-F(x,0)|+|F(x,0)-F(G(x))|+|F(G(x))-x| \\
       &\leq L|y|+L|(x,0)-G(x)|+\frac{\rho}{6}<\frac{\rho}{2}.
    \end{split}
\end{equation*}
This, together with Lemma~\ref{LR}, implies that for every $y$ with $|y|<{\rho}/{6L}$
$$
\overbar{\bbbb}^{r+1}(0,\rho/2)\subset F(\overbar{\bbbb}^{r+1}(0,\rho)\times\{y\}),
$$
in particular, the Lebesgue measure of $F(\overbar{\bbbb}^{r+1}(0,\rho)\times\{y\})$ is positive. 
Now, by Fubini's theorem, $F$ is differentiable a.e. on $\bbbb^{r+1}(0,\rho)\times \{y\}$ for almost every $y$ with $|y|<\rho/6L$, so for any such $y$, by Lemma \ref{T22}, 
$$
\rank D(F|_{\overbar{\bbbb}^{r+1}(0,\rho)\times\{y\}})=r+1
$$ 
on a positive measure subset of $\overbar{\bbbb}^{r+1}(0,\rho)\times\{y\}$. This, again by Fubini's theorem, implies $\rank DF\geq r+1$ on a positive measure subset of $\overbar{\bbbb}^{r+1}(0,\rho)\times \overbar{\bbbb}^{m-r-1}(0,\rho/6L)\subset U$, which gives the desired contradiction.
\end{proof}

\section{Factorization of Lipschitz mappings}
\label{S2}

The content of this section is based on \cite{EsmayliH,WengerY}. While the constructions in \cite{EsmayliH,WengerY} were carried out in a general framework of Lipschitz mappings between metric spaces, we specify the construction here to the case of locally Lipschitz mappings $f:\Omega\to\bbbr^m$, $\Omega\subset\bbbr^n$.

\subsection{Canonical factorization}
Assume that $\Omega\subset \bbbr^n$ is a domain and let $f:\Omega\to\bbbr^m$ be a locally Lipschitz mapping.

We say that the mapping $f$ \emph{factors} through a metric space $X$, if there is a locally Lipschitz map $\psi:\Omega\to X$ and a $1$-Lipschitz map $\phi:X\to\bbbr^m$ such that $f=\phi\circ \psi$.

Next, we describe a particular construction of a factorization of $f$.

We define a quasimetric in $\Omega$ by 
$$ 
d_f(x,y)=\inf \{\ell (f\circ\gamma)\}, 
$$
where the infimum is taken over all rectifiable curves $\gamma:[0,1]\to \Omega$ such that $\gamma(0)=x$ and $\gamma(1)=y$. 

Clearly, $d_f(x,y)=d_f(y,x)$ and $d_f$ satisfies the triangle inequality, but it is a quasimetric, since it may happen that $d_f(x,y)=0$ for some $x\neq y$.

It is easy to see that 
\begin{equation}
\label{eq3}
    |f(x)-f(y)|\leq d_f(x,y)
\end{equation}
and that for any compact set $K\subset \Omega$ there is a constant $L_K>0$ such that 
\begin{equation}
\label{eq4}
d_f(x,y)\leq L_K |x-y| \quad \text{for all } x,y\in K.
\end{equation}

Indeed, if $\eps<\dist(K,\partial\Omega)$, then the $\eps$-neighborhood of $K$
\begin{equation}
\label{eq17}
V_\eps:=\bigcup_{x\in K}\bbbb^n(x,\eps)
\end{equation}
satisfies $\overbar{V}_\eps\subset\Omega$. Take a finite sub-cover $\{ B_i\}_{i=1}^N$ of $K$ from the covering \eqref{eq17}. Let $\gamma_i:[0,1]\to\Omega$, $i=1,2,\ldots,N-1$, be rectifiable curves connecting the centers of the consecutive balls $B_i$ and $B_{i+1}$, and let $\Gamma_i=\gamma_i([0,1])$. 
The set
$$
\widetilde{K}:=\overbar{V}_\eps\cup\bigcup_{i=1}^{N-1} \Gamma_i\subset\Omega
$$
is compact. Let $\Lambda$ be the Lipschitz constant of $f|_{\widetilde{K}}$ (see Lemma \ref{T3}).

If $M$ equals $2\eps$ plus the sum of lengths of curves~$\gamma_i$, then any points $x,y\in K$ can be connected by a curve in $\widetilde{K}$ of length at most~$M$.

Take any points $x,y\in K$. If $|x-y|<\eps$, then the segment $[x,y]$ is contained in $V_\eps$ and hence
$$
d_f(x,y)\leq\ell(f([x,y]))\leq \Lambda |x-y|.
$$
If $|x-y|\geq \eps$ and $\gamma$ is a curve of length at most $M$ connecting $x$ and $y$ inside $\widetilde{K}$, then 
$$
d_f(x,y)\leq\ell(f\circ\gamma)\leq\Lambda M\leq\Lambda M\eps^{-1}|x-y|.
$$
Thus, \eqref{eq4} is satisfied with $L_K:=\max\{\Lambda,\Lambda M\eps^{-1}\}$.

Inequality \eqref{eq3} yields 
\begin{equation}\label{eq5}
d_f(x,y)=0\quad\Rightarrow\quad f(x)=f(y).
\end{equation}
However, in general, the converse implication is false.

We define an equivalence relation in $\Omega$ by
$$
x \sim y\quad\text{if and only if}\quad d_f(x,y)=0
$$
and then we define $Z_f:=\Omega\,/\sim$ with the quotient metric
\begin{equation}\label{eq6}
    d_f([x],[y]):=d_f(x,y),
\end{equation}
where $[x]=\{x'\in\Omega ~~:~~x\sim x'\}$. It is easy to check that \eqref{eq6} is well defined, i.e., if $x\sim x'$ and $y\sim y'$, then $d_f(x,y)=d_f(x',y')$.

The next result is an easy exercise.
\begin{lemma}
\label{T5}
$(Z_f,d_f)$ is a metric space.
\end{lemma}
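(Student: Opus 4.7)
The plan is to verify the four metric axioms for $(Z_f,d_f)$, taking the properties of $d_f$ on $\Omega$ as already established: it is symmetric, satisfies the triangle inequality, and by \eqref{eq4} is finite on any pair of points in $\Omega$ (since any two points of a domain can be joined by a polygonal arc, a compact set).

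First, I would verify that $\sim$ really is an equivalence relation on $\Omega$. Reflexivity $x\sim x$ follows by taking the constant curve, which gives $d_f(x,x)=0$. Symmetry follows because reversing the parameter of a rectifiable curve $\gamma$ joining $x$ to $y$ yields a curve joining $y$ to $x$ with the same image and hence $\ell(f\circ\gamma)$ unchanged. Transitivity follows from the triangle inequality for $d_f$ on $\Omega$: if $d_f(x,y)=0$ and $d_f(y,z)=0$, then $0\leq d_f(x,z)\leq d_f(x,y)+d_f(y,z)=0$. This triangle inequality on $\Omega$ itself is obtained in the standard way by concatenating curves: if $\gamma_1$ joins $x$ to $y$ and $\gamma_2$ joins $y$ to $z$, their concatenation $\gamma$ joins $x$ to $z$ with $\ell(f\circ\gamma)=\ell(f\circ\gamma_1)+\ell(f\circ\gamma_2)$, and taking infima gives $d_f(x,z)\leq d_f(x,y)+d_f(y,z)$.

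Next I would check that the formula \eqref{eq6} is well defined on equivalence classes. If $x\sim x'$ and $y\sim y'$, the triangle inequality for $d_f$ on $\Omega$ gives
\begin{equation*}
d_f(x',y')\leq d_f(x',x)+d_f(x,y)+d_f(y,y')=d_f(x,y),
\end{equation*}
and the symmetric argument yields the reverse inequality, so $d_f(x,y)=d_f(x',y')$. Hence $d_f([x],[y])$ depends only on the equivalence classes.

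It remains to verify the metric axioms on $Z_f$. Non-negativity and finiteness are inherited directly from $d_f$ on $\Omega$. Symmetry $d_f([x],[y])=d_f([y],[x])$ is immediate from the symmetry of $d_f$ on $\Omega$, and the triangle inequality on $Z_f$ is just a rewriting of the triangle inequality for $d_f$ on $\Omega$, using representatives of the classes. The only axiom with content is the separation: $d_f([x],[y])=0$ means $d_f(x,y)=0$, which by the very definition of $\sim$ means $x\sim y$, i.e., $[x]=[y]$. This is where the passage to the quotient is essential and is, in fact, the only reason we needed to quotient at all; the remaining work is bookkeeping. I expect no real obstacle here — the statement is called ``an easy exercise'' in the text — and I would present the argument in this order, emphasizing only the well-definedness step, since beginners often overlook that one has to use the triangle inequality to make \eqref{eq6} unambiguous.
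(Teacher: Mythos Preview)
Your proposal is correct and constitutes exactly the routine verification the paper has in mind; the paper does not actually supply a proof, calling the result ``an easy exercise'' and only remarking separately that \eqref{eq6} is well defined. Your write-up fills in precisely those details in the standard way.
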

Now, we define mappings 
$$
\Omega \xrightarrow{\psi} Z_f \xrightarrow{\phi}\bbbr^m 
\quad
\text{by} 
\quad
\psi(x)=[x], \,\, \phi([x])=f(x),
\quad
\text{so}
\quad
f=\phi\circ\psi.
$$
The mapping $\phi$ is well defined, because by \eqref{eq5}, if $[x]=[x']$, i.e., $x\sim x'$, then $f(x)=f(x')$.
\begin{lemma}
\label{T6}
The mapping $\psi:\Omega \to Z_f$ is locally Lipschitz and the mapping $\phi:Z_f\to \bbbr^m$ is $1$-Lipschitz. 
Hence, $f:\Omega\to\bbbr^m$ factors through $Z_f$, $f=\phi\circ\psi$.
\end{lemma}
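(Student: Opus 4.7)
The plan is to unpack the three claims directly from the construction of $d_f$ and from the inequalities \eqref{eq3} and \eqref{eq4} that were already established in the excerpt; no new idea is needed, and the only thing to verify is that the equivalence class quotient has not destroyed any of the estimates.

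First I would verify that $\phi$ is $1$-Lipschitz. Pick $[x],[y]\in Z_f$ and observe that by \eqref{eq3},
\[
|\phi([x])-\phi([y])| = |f(x)-f(y)| \leq d_f(x,y) = d_f([x],[y]),
\]
where the last equality is the definition \eqref{eq6} of the quotient metric. This is independent of the representatives chosen, since if $x\sim x'$ and $y\sim y'$ then $f(x)=f(x')$ and $f(y)=f(y')$ by \eqref{eq5}, and $d_f(x,y)=d_f(x',y')$ by the well-definedness of \eqref{eq6}.

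Next I would establish local Lipschitz continuity of $\psi$. Fix an arbitrary compact $K\subset\Omega$; by \eqref{eq4} there is a constant $L_K>0$ such that $d_f(x,y)\leq L_K|x-y|$ for all $x,y\in K$. By the definition of the quotient metric this gives
\[
d_f(\psi(x),\psi(y)) = d_f([x],[y]) = d_f(x,y) \leq L_K|x-y|
\quad\text{for all } x,y\in K,
\]
so $\psi|_K$ is $L_K$-Lipschitz. Applying this in particular to closed balls $\overbar{\bbbb}^n(x,r)\subset\Omega$ (or invoking Lemma~\ref{T3} directly) yields that $\psi$ is locally Lipschitz on $\Omega$.

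Finally, the factorization is immediate from the definition: for every $x\in\Omega$,
\[
(\phi\circ\psi)(x) = \phi([x]) = f(x).
\]
There is no real obstacle here, since all the analytic content (the fact that $d_f$ bounds $|f(x)-f(y)|$ from above, and that $d_f$ is controlled by $|x-y|$ on compact sets) was already extracted in \eqref{eq3} and \eqref{eq4}; the lemma is essentially a reading of those two estimates through the quotient.
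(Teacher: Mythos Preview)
Your proof is correct and follows essentially the same approach as the paper: both arguments simply read off the two estimates \eqref{eq3} and \eqref{eq4} through the quotient metric, obtaining $1$-Lipschitz continuity of $\phi$ from the former and local Lipschitz continuity of $\psi$ from the latter. The only differences are cosmetic (you treat $\phi$ before $\psi$ and spell out the factorization identity and well-definedness explicitly).
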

\begin{proof}
The mapping $\psi$ is locally Lipschitz, because according to \eqref{eq4}, for any compact $K\subset\Omega$ we have 
$$
d_f(\psi(x),\psi(y))=d_f([x],[y])=d_f(x,y)\leq L_K|x-y| \text{ for all }x,y\in K.
$$
Also, the mapping $\phi$ is $1$-Lipschitz, because \eqref{eq3} yields 
$$
|\phi([x])-\phi([y])|=|f(x)-f(y)|\leq d_f(x,y)=d_f([x],[y]).
$$
\end{proof}
Composing with a $1$-Lipschitz mapping cannot increase the length of a curve, thus for any curve $\alpha:[0,1]\to Z_f$ we have $\ell(\phi\circ\alpha)\leq \ell(\alpha)$.
\begin{lemma}
\label{T7}
If $\gamma:[0,1]\to  \Omega$
 is a rectifiable curve and $\alpha=\psi\circ \gamma:[0,1]\to Z_f$, then $\ell(\alpha)=\ell(\phi\circ\alpha)$.
\end{lemma}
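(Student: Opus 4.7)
The plan is to prove the two inequalities $\ell(\phi\circ\alpha)\leq\ell(\alpha)$ and $\ell(\alpha)\leq\ell(\phi\circ\alpha)$ separately. The first is already noted in the paragraph preceding the lemma: since $\phi$ is $1$-Lipschitz by Lemma~\ref{T6}, composition with $\phi$ does not increase the length of any curve, so $\ell(\phi\circ\alpha)\leq\ell(\alpha)$.

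For the reverse inequality, I would work directly from the definitions. Fix any partition $0=t_0<t_1<\cdots<t_n=1$. Since $\gamma$ is rectifiable, each restriction $\gamma|_{[t_i,t_{i+1}]}$ is a rectifiable curve in $\Omega$ connecting $\gamma(t_i)$ to $\gamma(t_{i+1})$, so by the very definition of $d_f$ as the infimum of $\ell(f\circ\beta)$ over all such curves $\beta$, together with the identification \eqref{eq6}, we get
\[
d_f(\alpha(t_i),\alpha(t_{i+1}))=d_f(\gamma(t_i),\gamma(t_{i+1}))\leq\ell\bigl(f\circ\gamma|_{[t_i,t_{i+1}]}\bigr).
\]
Summing over $i$ and using the additivity of length along concatenations,
\[
\sum_{i=0}^{n-1} d_f(\alpha(t_i),\alpha(t_{i+1}))\leq\sum_{i=0}^{n-1}\ell\bigl(f\circ\gamma|_{[t_i,t_{i+1}]}\bigr)=\ell(f\circ\gamma)=\ell(\phi\circ\alpha),
\]
where the last equality uses $\phi\circ\alpha=\phi\circ\psi\circ\gamma=f\circ\gamma$. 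Taking the supremum over all partitions yields $\ell(\alpha)\leq\ell(\phi\circ\alpha)$, which combined with the opposite inequality gives $\ell(\alpha)=\ell(\phi\circ\alpha)$.

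There is no real obstacle here; the only thing worth being careful about is the initial inequality $d_f(\gamma(t_i),\gamma(t_{i+1}))\leq\ell(f\circ\gamma|_{[t_i,t_{i+1}]})$, which is immediate once we notice that $\gamma|_{[t_i,t_{i+1}]}$ is an admissible competitor in the infimum defining $d_f$, and the additivity of length for rectifiable curves, which is standard (cf.\ the remarks in the arc-length paragraph).
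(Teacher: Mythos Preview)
Your proof is correct. The paper itself does not prove this lemma but refers to \cite[Lemma~6.4]{EsmayliH}; your argument is exactly the natural direct one (use $\gamma|_{[t_i,t_{i+1}]}$ as a competitor in the infimum defining $d_f$, sum, and take the supremum over partitions), and this is essentially the only reasonable route.
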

For a proof, see \cite[Lemma 6.4]{EsmayliH}. In other words, $\phi$ preserves lengths of curves in $Z_f$ that are images of rectifiable curves in $\Omega$.
\begin{lemma}
\label{T8}
$(Z_f,d_f)$ is a length space.
\end{lemma}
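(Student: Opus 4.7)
The plan is to verify directly that for any two distinct classes $[x],[y]\in Z_f$, the distance $d_f([x],[y])$ coincides with the infimum of lengths of curves in $Z_f$ joining them. One inequality is a general feature of metric spaces; the other is obtained by transporting a nearly length-minimizing curve in $\Omega$ up to $Z_f$ via $\psi$, and then using Lemma~\ref{T7} to control its length in $Z_f$.

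First I would fix $[x]\neq[y]$ in $Z_f$ and representatives $x,y\in\Omega$. For every continuous curve $\beta:[0,1]\to Z_f$ with $\beta(0)=[x]$ and $\beta(1)=[y]$, the definition of length in the metric space $(Z_f,d_f)$ yields $d_f([x],[y])\leq\ell(\beta)$, so passing to the infimum gives one inequality for free.

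For the reverse inequality, given $\eps>0$, choose a rectifiable curve $\gamma:[0,1]\to\Omega$ with $\gamma(0)=x$, $\gamma(1)=y$ and $\ell(f\circ\gamma)\leq d_f(x,y)+\eps$; such a $\gamma$ exists by the very definition of the quasimetric $d_f$. Set $\alpha:=\psi\circ\gamma:[0,1]\to Z_f$. The image of $\gamma$ is a compact subset of $\Omega$, on which $\psi$ is Lipschitz by Lemma~\ref{T6}, so $\alpha$ is continuous (in fact Lipschitz) and joins $[x]$ to $[y]$ in $Z_f$. Since $\phi\circ\alpha=\phi\circ\psi\circ\gamma=f\circ\gamma$, Lemma~\ref{T7} gives
\[
\ell(\alpha)=\ell(\phi\circ\alpha)=\ell(f\circ\gamma)\leq d_f(x,y)+\eps=d_f([x],[y])+\eps.
\]
Hence the infimum of lengths of curves in $Z_f$ joining $[x]$ and $[y]$ is at most $d_f([x],[y])+\eps$, and letting $\eps\to 0$ finishes the proof.

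There is no serious obstacle here: the only point that deserves care is that the curve $\alpha=\psi\circ\gamma$ must truly be a continuous curve in $Z_f$ (so that the infimum in the definition of a length space applies to it), and that its length in $Z_f$ is controlled by the length of $f\circ\gamma$ in $\bbbr^m$; both are handled by Lemmas~\ref{T6} and~\ref{T7}.
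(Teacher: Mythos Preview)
Your argument is correct and is exactly the natural derivation of Lemma~\ref{T8} from Lemma~\ref{T7}: push a nearly $d_f$-optimal curve $\gamma$ in $\Omega$ through $\psi$ and invoke Lemma~\ref{T7} to identify $\ell(\psi\circ\gamma)$ with $\ell(f\circ\gamma)$. The paper does not spell out a proof here---it defers to \cite[Corollary~6.5]{EsmayliH}, where Lemma~\ref{T8} is obtained as an immediate corollary of Lemma~\ref{T7} in precisely this way---so your write-up matches the intended argument.
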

For a proof, see \cite[Corollary 6.5]{EsmayliH}.
\begin{lemma}
\label{T9}
Let $\alpha:\bbbs^1\to Z_f$ be a Lipschitz curve. Then there is a sequence of Lipschitz curves $\gamma_k:\bbbs^1\to\Omega$ such that $\alpha_k=\psi\circ\gamma_k:\bbbs^1\to Z_f$ converge uniformly to $\alpha$, and $\ell(\alpha_k)\to \ell(\alpha)$.
\end{lemma}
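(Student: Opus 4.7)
The plan is to build each $\gamma_k$ by taking a very fine partition of $\bbbs^1$, lifting each vertex of the partition to a representative in $\Omega$ of the equivalence class $\alpha(t_i^k)\in Z_f$, and connecting consecutive lifts by a rectifiable curve in $\Omega$ whose $f$-image is nearly as short as the $d_f$-distance between those classes. More precisely, for each $k$ fix a partition $0=t_0^k<t_1^k<\ldots<t_{N_k}^k=2\pi$ of $\bbbs^1$ with mesh $\mu_k\to 0$. Choose representatives $x_i^k\in\Omega$ with $\psi(x_i^k)=\alpha(t_i^k)$, $x_{N_k}^k=x_0^k$. Since $\Omega$ is open and connected, $d_f(x_i^k,x_{i+1}^k)<\infty$, and by the definition of the quasimetric we may pick a rectifiable curve $\sigma_i^k$ in $\Omega$ from $x_i^k$ to $x_{i+1}^k$ with
$$
\ell(f\circ\sigma_i^k)\leq d_f(x_i^k,x_{i+1}^k)+\tfrac{\eps_k}{N_k},
$$
where $\eps_k\to 0$. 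After reparameterizing $\sigma_i^k$ on the interval $[t_i^k,t_{i+1}^k]$ (for example by affine rescaling of its arc-length parameterization) and concatenating, we obtain a curve $\gamma_k:\bbbs^1\to\Omega$ that is Lipschitz on each sub-interval and therefore Lipschitz on $\bbbs^1$ (finite concatenation of Lipschitz maps is Lipschitz).

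For uniform convergence, I would estimate, for $t\in[t_i^k,t_{i+1}^k]$,
$$
d_f(\psi\circ\gamma_k(t),\alpha(t))\leq d_f(\psi\circ\gamma_k(t),\alpha(t_i^k))+d_f(\alpha(t_i^k),\alpha(t)).
$$
The second term is at most $L\mu_k$ since $\alpha$ is $L$-Lipschitz, while the first equals $d_f(\gamma_k(t),x_i^k)$ and is bounded by $\ell(f\circ\sigma_i^k)\leq d_f(x_i^k,x_{i+1}^k)+\eps_k/N_k\leq L\mu_k+\eps_k/N_k$. Both terms tend to $0$ uniformly in $t$, which yields $\alpha_k\to\alpha$ uniformly. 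For the length convergence, Lemma~\ref{T7} applied to each piece gives
$$
\ell(\alpha_k|_{[t_i^k,t_{i+1}^k]})=\ell(\phi\circ\psi\circ\sigma_i^k)=\ell(f\circ\sigma_i^k)\leq d_f(\alpha(t_i^k),\alpha(t_{i+1}^k))+\tfrac{\eps_k}{N_k},
$$
so summing over $i$ produces $\ell(\alpha_k)\leq\sum_i d_f(\alpha(t_i^k),\alpha(t_{i+1}^k))+\eps_k\leq\ell(\alpha)+\eps_k$. The reverse inequality $\ell(\alpha)\leq\liminf_k\ell(\alpha_k)$ follows from the lower semicontinuity of length with respect to uniform convergence, giving $\ell(\alpha_k)\to\ell(\alpha)$.

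The main obstacle I expect is a bookkeeping one rather than a conceptual one: one must verify that the construction can actually be carried out so that each $\sigma_i^k$ is a genuine \emph{rectifiable} curve in $\Omega$ (so concatenation is well defined and Lipschitz on the sub-interval), and that the global map $\gamma_k$ is Lipschitz on $\bbbs^1$ (which requires the reparameterization of the possibly very long $\sigma_i^k$ to be Lipschitz on its short parameter interval $[t_i^k,t_{i+1}^k]$; using arc-length parameterization of $\sigma_i^k$ followed by affine rescaling handles this with Lipschitz constant $\ell(\sigma_i^k)/(t_{i+1}^k-t_i^k)$, which is permitted to depend on $k$ since we only need each individual $\gamma_k$ to be Lipschitz). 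One should also confirm that the identifications at the partition points glue together continuously, which is automatic since consecutive pieces share endpoints $x_{i+1}^k$, and that $x_{N_k}^k=x_0^k$ is respected so $\gamma_k$ is genuinely defined on $\bbbs^1$.
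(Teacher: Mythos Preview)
Your argument is correct. The paper itself does not prove this lemma but defers to \cite[Lemma~6.6]{EsmayliH}; your construction---partition $\bbbs^1$, lift the partition points to representatives in $\Omega$, join consecutive lifts by near-optimal rectifiable curves for the quasimetric $d_f$, then concatenate---is precisely the natural approach and matches the argument there. The two key inequalities you isolate (the pointwise estimate $d_f(\psi\circ\gamma_k(t),\alpha(t_i^k))\leq \ell(f\circ\sigma_i^k)$ via the sub-arc of $\sigma_i^k$, and the length bound via Lemma~\ref{T7} plus lower semicontinuity) are exactly the right ones, and the bookkeeping issues you flag are all genuine but, as you say, routine.
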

For a proof, see \cite[Lemma 6.6]{EsmayliH}.

We will discuss now a situation when the metric space $Z_f$ is a metric tree.

\subsection{Factorization through metric trees}
The next result is similar to Theorem~1.9 in \cite{EsmayliH}. While \cite{EsmayliH} deals with a more general factorization of Lipschitz maps into metric spaces, the mappings considered in \cite{EsmayliH} are defined on the compact cube $[0,1]^n$. The fact that $\Omega$ is not compact causes some additional problems. At the same time, our argument is simpler than the one used in \cite{EsmayliH} since we avoid the use of metric area formula and that of differential forms. However, the overall idea of the proof remains the same.

\begin{proposition}
\label{T10}
Let $\Omega\subset\bbbr^n$ be a simply connected domain and let $f:\Omega\to\bbbr^m$ be a locally Lipschitz map. Then $Z_f$ is a metric tree if and only if $\rank Df\leq 1$ almost everywhere. 
\end{proposition}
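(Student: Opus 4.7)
The plan is to apply Theorem~\ref{T28}(e), which (given that $Z_f$ is a length space---a fact guaranteed by Lemma~\ref{T8}) characterizes $Z_f$ being a metric tree by the non-existence of a Lipschitz loop $\alpha:\bbbs^1\to Z_f$ and a Lipschitz map $\pi:Z_f\to\bbbr^2$ with $\pi\circ\alpha:\bbbs^1\to\bbbs^1$ homotopic to the identity. I will verify both directions of the equivalence against this condition.

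For the forward implication ($Z_f$ a metric tree $\Rightarrow\rank Df\leq 1$ a.e.), I proceed by contrapositive. Suppose $\rank Df\geq 2$ on a positive measure set; Rademacher's theorem supplies a point $x_0$ where $f$ is differentiable and $\rank Df(x_0)\geq 2$. Composing with a linear projection $\pi_0:\bbbr^m\to\bbbr^2$ that is injective on a $2$-dimensional subspace of $Df(x_0)\bbbr^n$, the map $h:=\pi_0\circ f$ has $\rank Dh(x_0)=2$. After an affine normalization and restricting to a 2-plane $V\subset\bbbr^n$ on which $Dh(x_0)|_V$ is invertible, I may assume $x_0=0$, $h(0)=0$, and $Dh(0)|_V=\id$. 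Then for small $\rho>0$, Lemma~\ref{LR} applied to $h$ on the disk $\{v\in V:|v|\leq\rho\}$ tells me that the boundary loop $\gamma:\bbbs^1\to\Omega$ gets mapped by $h$ to a Lipschitz loop that encloses $0$ with winding number $1$ and stays at distance $\geq\rho/2$ from~$0$. A Lipschitz radial retraction $\sigma:\bbbr^2\to\bbbs^1$, say $\sigma(y)=y/\max(|y|,\epsilon)$ for small $\epsilon$, then yields the forbidden pair: set $\alpha:=\psi\circ\gamma$ and $\pi:=\sigma\circ\pi_0\circ\phi$, for which $\pi\circ\alpha=\sigma\circ h\circ\gamma:\bbbs^1\to\bbbs^1$ has degree $1$ and so is homotopic to the identity, contradicting Theorem~\ref{T28}(e).

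For the reverse implication ($\rank Df\leq 1$ a.e.\ $\Rightarrow Z_f$ is a tree), I argue by contradiction: suppose Lipschitz $\alpha:\bbbs^1\to Z_f$ and $\pi:Z_f\to\bbbr^2$ exist with $\pi\circ\alpha:\bbbs^1\to\bbbs^1$ homotopic to the identity. By Lemma~\ref{T9} approximate $\alpha$ uniformly by $\alpha_k=\psi\circ\gamma_k$ with Lipschitz loops $\gamma_k:\bbbs^1\to\Omega$. Simple connectedness of $\Omega$---the place where this hypothesis enters---gives Lipschitz extensions $G_k:\bbbb^2\to\Omega$ of each $\gamma_k$, obtained via a Lipschitz refinement of a continuous null-homotopy. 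The key chain of rank bounds is: Lemma~\ref{T7} gives $\ell(\psi\circ\beta)=\ell(f\circ\beta)$ for rectifiable $\beta\subset\Omega$, so $\ell((\pi\circ\psi)\circ\beta)\leq\mathrm{Lip}(\pi)\cdot\ell(f\circ\beta)$; Lemma~\ref{T11} then yields $\rank D(\pi\circ\psi)\leq 1$ a.e.\ on $\Omega$; Lemma~\ref{T21}, applied on a relatively compact neighborhood of $G_k(\overbar{\bbbb}^2)$ in $\Omega$ where $\pi\circ\psi$ is honestly Lipschitz, pushes this down to $\rank D(\pi\circ\psi\circ G_k)\leq 1$ a.e.\ on $\bbbb^2$; and Lemma~\ref{T22} then forces $|(\pi\circ\psi\circ G_k)(\bbbb^2)|=0$. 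Meanwhile, the restriction to the boundary is $\pi\circ\alpha_k$, which for large $k$ is uniformly close to the degree-$1$ loop $\pi\circ\alpha$; a standard degree-theoretic argument then shows the image must contain an open set of positive measure around any point with nonzero winding number, contradicting the vanishing measure.

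The main obstacle I anticipate is the Lipschitz-extension step $\gamma_k\rightsquigarrow G_k$. Simple connectedness of $\Omega$ immediately provides continuous null-homotopies, but the subsequent rank machinery---particularly Lemma~\ref{T21}---demands a genuine Lipschitz map $G_k:\bbbb^2\to\Omega$. This must be arranged explicitly, e.g., by a sufficiently fine piecewise-linear approximation of a continuous null-homotopy, chosen small enough that the approximation lies inside $\Omega$ and matches $\gamma_k$ on the boundary.
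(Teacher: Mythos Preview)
Your reverse implication ($\rank Df\leq 1$ a.e.\ $\Rightarrow$ $Z_f$ is a tree) matches the paper's argument essentially line for line: both contradict Theorem~\ref{T28}(e), approximate the offending loop via Lemma~\ref{T9}, fill in via simple connectedness, and run the chain Lemma~\ref{T7} $\to$ Lemma~\ref{T11} $\to$ Lemma~\ref{T21} $\to$ Lemma~\ref{T22}. The only cosmetic difference is that the paper first post-composes $\tilde\pi$ with a Lipschitz map $H$ equal to $x/|x|$ for $|x|\geq 1/2$, so that for large $k$ the boundary map $H\circ\tilde\pi\circ\alpha_k$ already lands in $\bbbs^1$ with degree~$1$, and then the image of the disk contains $\bbbb^2$ outright. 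Your winding-number formulation achieves the same thing without the normalization; both are fine. Your flagged obstacle (Lipschitz refinement of the continuous null-homotopy) is handled identically in the paper by a one-line appeal to ``standard approximation''.

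Your forward implication is genuinely different and in fact more self-contained: the paper does not prove this direction at all but simply cites \cite[Theorem~5.6]{EsmayliH}, noting that it is not needed for Theorem~\ref{T26}. Your argument via a point of differentiability with $\rank\geq 2$, restriction to a $2$-plane, and construction of the pair $(\alpha,\pi)$ violating Theorem~\ref{T28}(e) is correct. One small imprecision: you do not really use Lemma~\ref{LR} here---what you need is only its \emph{hypothesis} $|h(x)-x|<\rho/2$ on $|x|=\rho$, which via the straight-line homotopy keeps $h\circ\gamma$ in $\bbbr^2\setminus\{0\}$ and forces winding number~$1$; the conclusion of Lemma~\ref{LR} (that the image contains a half-ball) plays no role in this direction. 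The payoff of your route is a proof that stays entirely within the tools developed in the paper, whereas the paper's citation imports a result proved in the more general metric-space framework of \cite{EsmayliH}.
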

\begin{proof}
If $Z_f$ is a metric tree, then $\rank Df\leq 1$ almost everywhere by \cite[Theorem~5.6]{EsmayliH}. 

While we do not provide details of the proof of this implication, let us emphasize that it is not needed in the proof of Theorem~\ref{T26}. We only need the other implication: that if $\rank Df\leq 1$, then $Z_f$ is a metric tree, and we prove it carefully below.

Suppose by way of contradiction that $Z_f$ is not a metric tree. Since $Z_f$ is a length space, Theorem~\ref{T28}(e) yields Lipschitz maps $\tilde{\alpha}:\bbbs^1\to Z_f$ and $\tilde{\pi}:Z_f\to\bbbr^2$, such that
$\tilde{\pi}\circ\tilde{\alpha}$ maps $\bbbs^1$ to $\bbbs^1\subset\bbbr^2$ and $\tilde{\pi}\circ\tilde{\alpha}:\bbbs^1\to\bbbs^1$ is homotopic to the identity map.

Let $H:\bbbr^2\to\bbbr^2$ be a Lipschitz map such that $H(x)=x/|x|$ for $|x|\geq 1/2$.

Let $\alpha_k=\psi\circ\gamma_k$ be the Lipschitz approximation of $\tilde{\alpha}$ from Lemma~\ref{T9}. Since $\alpha_k\to\tilde{\alpha}$ uniformly, $\tilde{\pi}\circ\alpha_k\to\tilde{\pi}\circ\tilde{\alpha}$ uniformly, and hence $|\tilde{\pi}\circ\alpha_k|>1/2$ for all sufficiently large $k$. 
Thus,
$$
H\circ\tilde{\pi}\circ\alpha_k=\frac{\tilde{\pi}\circ\alpha_k}{|\tilde{\pi}\circ\alpha_k|}:\bbbs^1\to\bbbs^1,
\quad
H\circ\tilde{\pi}\circ\alpha_k\to \tilde{\pi}\circ\tilde{\alpha}
\text{ uniformly.}
$$
Since $\tilde{\pi}\circ\tilde{\alpha}$ is homotopic to the identity, 
$H\circ\tilde{\pi}\circ\alpha_k$ is homotopic to the identity for sufficiently large $k$.
Therefore, if $\alpha=\alpha_k$ for sufficiently large $k$ and $\pi=H\circ\tilde{\pi}$, then 
\begin{enumerate}
\item $\alpha:\bbbs^1\to Z_f$ is of the form $\alpha=\psi\circ\gamma$, where $\gamma:\bbbs^1\to\Omega$ is Lipschitz,
\item $\pi:Z_f\to\bbbr^2$ is Lipschitz,
\item $\pi\circ\alpha:\bbbs^1\to\bbbs^1$ is homotopic to the identity.
\end{enumerate}

Since $\Omega$ is simply connected, $\gamma$ admits a continuous extension $\hat{g}:\overbar{\bbbb}^2\to\Omega$. Using standard approximation, we may then improve $\hat{g}$ to a Lipschitz map $g:\overbar{\bbbb}^2\to\Omega$ such that $g|_{\partial\bbbb^2}=\gamma$. Then
$\pi\circ\psi\circ g:\overbar{\bbbb}^2\to\bbbr^2$ is a Lipschitz extension of $\pi\circ\psi\circ\gamma=\pi\circ\alpha:\bbbs^1\to\bbbs^1$.
The mapping $\pi\circ\alpha$ is homotopic to the identity, so it follows that 
$\bbbb^2\subset(\pi\circ\psi\circ g)(\bbbr^2)$. Since the image of the Lipschitz map $\pi\circ\psi\circ g$ has positive area, Lemma~\ref{T22} implies that its Jacobian must be non-zero on a set of positive measure and we will arrive to a contradiction as soon as we prove the following claim:
\begin{equation}
\label{eq12}
\det D(\pi\circ\psi\circ g)=0
\quad
\text{almost everywhere in $\bbbb^2$.}
\end{equation}
To prove this, we will use Lemma~\ref{T11}, so we need to consider rectifiable curves in $\bbbb^2$. Let $\eta:[a,b]\to\bbbb^2$ be a rectifiable curve, then $g\circ\eta:[a,b]\to\Omega$ is rectifiable and Lemma~\ref{T7} yields
$$
\ell(\psi\circ g\circ\eta)=\ell(\phi\circ\psi\circ g\circ\eta)=\ell(f\circ g\circ\eta).
$$
If $L$ is the Lipschitz constant of $\pi$, then
$$
\pi\circ\psi\circ g:\bbbb^2\to\bbbr^2
\quad
\text{and}
\quad
f\circ g:\bbbb^2\to\bbbr^m
$$
are Lipschitz continuous, and 
$$
\ell(\pi\circ\psi\circ g\circ\eta)\leq L\,\ell(\psi\circ g\circ\eta)=
L\,\ell(f\circ g\circ\eta),
$$
so Lemma~\ref{T11}, Lemma~\ref{T21} and the fact that $\rank Df\leq 1$ a.e. yield that
$$
\rank D(\pi\circ\psi\circ g)\leq \rank D(f\circ g)\leq 1
\quad
\text{a.e.}
$$
This proves \eqref{eq12}. The proof is complete.
\end{proof}

\section{Proof of Theorem \ref{T26}}
\label{S3}
This section is entirely devoted to the proof of Theorem \ref{T26}. We will need the following lemma:

\begin{lemma}
\label{L:eps}
Assume $\Omega\subset \bbbr^n$ is a domain. Fix $x_o\in \Omega$ and for $\eps>0$ let $\Omega_\eps$ be the connected component of the open set 
$\{x\in \Omega~~:~~|x-x_o|<\eps^{-1},~ \dist(x,\partial\Omega)>\eps\}$
containing the point~$x_o$. Then the family  $\{\Omega_\eps\}_{\eps>0}$ has the following properties:
\begin{itemize}
    \item[(a)] the sets $\overbar{\Omega}_\eps$ are compact and connected in $\Omega$,
    \item[(b)] whenever $\eps<\eps'$, we have $\overbar{\Omega}_{\eps'}\subset \Omega_\eps$,
    \item[(c)] $\bigcup_{\eps>0} \Omega_\eps=\Omega$,
    \item[(d)] for any compact $K\subset\Omega$ there is $\eps_K>0$ such that for any $\eps\in (0,\eps_K)$ we have $K\subset \Omega_\eps$.
    \item[(e)] $\dist(\Omega_\eps,\partial \Omega)\geq \eps$ and $\diam \Omega_\eps\leq 2\eps^{-1}$.
\end{itemize}
\end{lemma}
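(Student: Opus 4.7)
The plan is routine: verify (a) and (e) directly from the defining inequalities, deduce (b) by a strict-inclusion trick, and then use path-connectedness of $\Omega$ to obtain (c) and (d).

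First I would note that the ambient set
\[
U_\eps := \{x\in\Omega \, : \, |x-x_o|<\eps^{-1},\ \dist(x,\partial\Omega)>\eps\}
\]
is open in $\bbbr^n$ (both defining conditions are open), so $\Omega_\eps$ is open as a connected component of an open set in $\bbbr^n$. Property (e) is then immediate, and for (a) I would observe that the closure of $\Omega_\eps$ in $\bbbr^n$ is contained in $\{x \, : \, |x-x_o|\leq\eps^{-1},\ \dist(x,\partial\Omega)\geq\eps\}$, which is closed, bounded, and stays a positive distance $\eps$ away from $\partial\Omega$. Hence $\overbar{\Omega_\eps}$ is a compact subset of $\Omega$, and connectedness is preserved under closure.

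For (b), the key point is that replacing $\eps'$ by $\eps<\eps'$ upgrades both defining inequalities from non-strict to strict, so $\overbar{U_{\eps'}}\subset U_\eps$. Therefore $\overbar{\Omega_{\eps'}}\subset \overbar{U_{\eps'}}\subset U_\eps$ is a connected set containing $x_o$, and hence lies inside the component $\Omega_\eps$.

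For (c) and (d), the essential input is that the open connected set $\Omega$ is path-connected. For (c), given $x\in\Omega$, I would pick a continuous path $\gamma\subset\Omega$ from $x_o$ to $x$; since the image of $\gamma$ is compact, the quantities $|y-x_o|$ and $\dist(y,\partial\Omega)^{-1}$ are uniformly bounded on it, so $\gamma\subset U_\eps$ for all sufficiently small $\eps$, and connectedness places the entire path inside $\Omega_\eps$. For (d), I would bootstrap this by absorbing $K$ into a larger compact connected set: cover $K$ by finitely many closed balls inside $\Omega$ centered at points of $K$, join the center of each ball to $x_o$ by a path in $\Omega$, and let $L$ be the union of the balls and the paths. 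Then $L$ is compact, connected, and contains $K\cup\{x_o\}$; the argument of (c) applied to $L$ yields $K\subset L\subset \Omega_\eps$ for every $\eps$ below some $\eps_K>0$.

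The proof is essentially bookkeeping and I do not anticipate any serious obstacle; the only point that needs a moment of care is ensuring that at each stage the candidate set remains strictly inside $\Omega$ rather than drifting to $\partial\Omega$, and this is guaranteed throughout by the $\dist(\cdot,\partial\Omega)>\eps$ condition, which supplies a uniform positive margin.
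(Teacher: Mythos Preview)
Your proposal is correct and is precisely the routine verification the paper has in mind: the paper's own ``proof'' is just the sentence ``Checking the properties (a) through (e) of $\Omega_\eps$ is straightforward'' followed by a \qedsymbol, so your write-up simply fills in the details the authors omitted. The only microscopic point worth tightening in (a) is to note explicitly that $\overbar{\Omega_\eps}\subset\overbar{\Omega}=\Omega\cup\partial\Omega$, so that ``positive distance from $\partial\Omega$'' indeed forces $\overbar{\Omega_\eps}\subset\Omega$ rather than landing in the exterior; everything else is in order.
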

Checking the properties (a) through (e) of $\Omega_\eps$ is straightforward.
\qed

By the construction given in Section \ref{S2}, the mapping $f$ factorizes:
$$
\Omega\xrightarrow{\psi}Z_f\xrightarrow{\phi}\bbbr^m, \quad f=\phi\circ\psi,
$$
where $\psi$ is locally Lipschitz, $\phi$ is $1$-Lipschitz, and, according to Proposition \ref{T10}, $(Z_f, d_f)$ is a metric tree.

Let us next fix $\eps>0$ and let $\Omega_\eps$ be as in Lemma \ref{L:eps}. 

Since $\overbar{\Omega}_\eps$ is compact and connected in $\Omega$, the set $\psi(\overbar{\Omega}_\eps)$ is compact and connected in $Z_f$.

The outline of the rest of the proof was given in the Introduction, here we state it in some more detail:
\begin{enumerate}[1)]
\item We find a finite tree $T\subset Z_f$, approximating $\psi(\overbar{\Omega}_\eps)$, so that the retraction ${r:Z_f\to T}$ (see Lemma \ref{L12}) is close to the identity on $\psi(\overbar{\Omega}_\eps)$.
\item We embed $T$ in $\bbbr^E$, where $E$ is the number of edges in $T$, $w:T\to\bbbr^E$,
so that the edges in $w(T)$ are mutually orthogonal.
\item For some small $\delta>0$ depending on $\eps$, we construct a $C^\infty$-smooth mapping $\rho_\eps$ from a $\delta$-neighborhood $V_\delta$ of $w(T)$ in $\bbbr^E$ onto $w(T)$; $\rho_\eps|_{w(T)}$ need not be the identity, but it is close to the identity. Clearly, $\rank D\rho_\eps\leq 1$.
\item We approximate  $g=w\circ r\circ\psi:\Omega\to \bbbr^E$ by a $C^\infty$-smooth $g_\eps:\Omega\to\bbbr^E$, ${\sup_{\Omega}|g-g_\eps|< \delta}$.
Clearly, the image of $g_\eps$ lies in $V_\delta$.
\item We extend $\phi\circ w^{-1}:w(T)\to\bbbr^m$ to a Lipschitz map $\tilde{\phi}:\bbbr^E\to\bbbr^m$ and approximate $\tilde{\phi}$ with a smooth $\phi_{\eps}:\bbbr^E\to\bbbr^m$.
\end{enumerate}

Then $f_\eps=\phi_\eps\circ\rho_\eps\circ g_\eps$ is a smooth almost-uniform approximation of $f$.
Since $\rank D\rho_\eps\leq 1$, it follows that $\rank Df_\eps\leq 1$.

\noindent
{\bf Construction of a finite sub-tree $T\subset Z_f$ approximating $\psi(\overbar{\Omega}_\eps)$.}

Let $A\subset \psi(\overbar{\Omega}_\eps)$ be a finite $\eps$-net in $\psi(\overbar{\Omega}_\eps)$. The tree $T$ consists of all the geodesic arcs in $Z_f$ connecting the points of the $\eps$-net $A$. It is easy to see that if $k$ is the number of points in $A$ (and $k>1$), then $T$ has at most $2k-2$ vertices and $2k-3$ edges. Indeed, for $k=2$ we have 2 vertices and 1 edge. Suppose we already have the tree constructed with $k$ points from $A$ and add a $(k+1)$-st point $a\in A\setminus T$. Then there is a unique point $z$ in $T$ closest to $a$; adding the arc $\la z,a\ra$ to $T$ increases the number of vertices and the number of edges by one (if $z$ is a vertex of $T$) or by two  (if $z$ is an interior point of one of the edges of $T$). For any point $b\in T$, $\la b,z\ra\cup \la z,a\ra$ is an arc (and, since $Z_f$ is a tree, the only arc) connecting $b$ and $a$, so adding all the other geodesics connecting $a$ to the first $k$ points used to create $T$ is not necessary.

Thus, we have a finite tree $T\subset Z_f$, with a finite number $E$ of edges. Let us enumerate these edges, $\{\eta_i\}_{i=1}^E$, in such a way that for each $k$ the set $T_k=\bigcup_{i=1}^k \eta_i$ is connected. To do so, we pick an arbitrary edge and label it $\eta_1$, then pick $\eta_2$ among the edges that share a vertex with $\eta_1$, then choose $\eta_3$ among edges sharing a vertex with $\eta_1$ or $\eta_2$ and so on. 
Denote by $\lambda_k$ the length of the edge $\eta_k$. We write $\eta_k=\la u_k, v_k\ra$, where $u_1$ is one of the endpoints of $\eta_1$, and $u_k$, $k\geq 2$, is the unique endpoint of $\eta_k$ that belongs to $T_{k-1}$.

Note that, since the $\eps$-net $A$ is a subset of $T$, every point of $\psi(\overbar{\Omega}_\eps)$ lies in a distance less than $\eps$ to $T$. 
Thus, if $r:Z_f\to T$ is the retraction given in Lemma~\ref{L12}, for any $z\in\psi(\overbar{\Omega}_\eps)$ we have $d_f(r(z),z)=d_f(z,T)<\eps$, so $r$ is $\eps$-close to the identity on $\psi(\overbar{\Omega}_\eps)$.

\noindent
{\bf Embedding $T$ in $\bbbr^E$.}

We embed $T$ in $\bbbr^E$, $w:T\to\bbbr^E$, in the following, inductive way:

We map $u_1$ to the origin in $\bbbr^E$ and embed the edge $\eta_1$ isometrically along the $1$-st coordinate axis:
$$
\eta_1=\la u_1,v_1\ra\xmapsto{~~w~~} [0,\lambda_1 \we_1]
$$
(following the standard conventions, for $a,b\in \bbbr^E$,  we denote by $[a,b]$ the interval with endpoints $a$ and $b$; $\we_1,\ldots, \we_E$ is the standard orthonormal basis in $\bbbr^E$).

The edge $\eta_2$ shares the endpoint $u_2$ with $\eta_1$, 
$u_2\in\{u_1,v_1\}$,
so we already know the value of $w(u_2)$. 
That is, $w(u_2)=0$, if $u_2=u_1$ and $w(u_2)=\lambda_1\we_1$, if $u_2=v_1$.

Then, we map $\eta_2$ to a segment starting at $w(u_2)$ and extending in the $2$-nd coordinate direction:
$$
\eta_2=\la u_2,v_2\ra\xmapsto{~~w~~} [w(u_2),w(u_2)+\lambda_2 \we_2]
\quad
(=[0,\lambda_2\we_2] \text{ or } [\lambda_1\we_1,\lambda_1\we_1+\lambda_2\we_2]),
$$
and so on: once we have $w$ defined on the edges $\eta_1,\ldots,\eta_{k-1}$, we know the value of the embedding $w$ at the endpoint $u_k$ of $\eta_{k}$; then the embedded isometric image of $\eta_{k}$ is the interval in $\bbbr^E$ starting at $w(u_{k})$ and extending in the $k$-th coordinate direction:
$$
\eta_{k}=\la u_k,v_k\ra\xmapsto{~~w~~} [w(u_k),w(u_k)+\lambda_k \we_k].
$$
This way each of the edges $w(\eta_k)$ of $w(T)$ is parallel to $\we_k$ and hence the edges of $w(T)$ are mutually orthogonal. Also, the edges of $w(T)$ form a subset of the edges of the closed $E$-dimensional interval $[0,\lambda_1]\times[0,\lambda_2]\times\cdots\times[0,\lambda_E]$.

More precisely, the edge $w(\eta_i)$ satisfies:
for $1\leq j<i$, there is $\lambda_{ij}\in \{0,\lambda_j\}$ such that the $j$-th coordinate of $w(\eta_i)$ equals $\lambda_{ij}$; the $i$-th coordinate of $w(\eta_i)$ can be any number in $[0,\lambda_i]$; for $i<j\leq E$, the $j$-th coordinate of $w(\eta_i)$ equals $\lambda_{ij}:=0$.

The embedding $w:T\to w(T)\in \bbbr^E$ would be isometric, if we considered $\bbbr^E$ with the $\ell^1$ norm, $|x|_{1}=\sum_{i=1}^E |x_i|$. This norm is less convenient for us than the standard Euclidean norm;  with that norm $w:T\to (\bbbr^E,|\cdot|)$ is 1-Lipschitz, while $w^{-1}:w(T)\to T$ is $\sqrt{E}$-Lipschitz, by the Schwarz inequality.

\noindent
{\bf Projection onto $w(T)$.}

Let $\lambda=\min_{i=1}^E \lambda_i$ denote the minimum of length of edges in $T$. 
Fixing $\eps$ and the $\eps$-net $A\subset Z_f$ determines $\lambda$.
Let $\delta\in (0,\lambda/4]$. The actual value of $\delta$ will depend on $\eps$ and it will be determined later. Let $V_\delta$ be the $\delta$-neighborhood of $w(T)$ in $\bbbr^E$. 

In the next step, we construct a smooth mapping $\rho_\eps:\bbbr^E\to\bbbr^E$, satisfying $\rho_\eps(V_\delta)=w(T)$. It is not difficult to construct a continuous retraction $\hat{\rho}_\eps:V_\delta\to w(T)$, i.e., a continuous map satisfying $\hat{\rho}_\eps(t)=t$ for all $t\in w(T)$. However, in general it is not possible to find a smooth map with that property, so we relax the condition that $\rho_\eps|_{w(T)}=\mathrm{id}$, asking merely that $\rho_\eps(t)$ is sufficiently close to $t$ for all $t\in w(T)$.

For $i=1,2,\ldots, E$, let $\xi_i:\bbbr\to\bbbr$ be a smooth, non-decreasing function satisfying
$$
\xi_i(s)=\begin{cases}
\lambda_i & \text{ for }s>\lambda_i-\delta,\\ 
s & \text{ for }s\in (2\delta,\lambda_i-2\delta),\\
0 & \text{ for }s<\delta,
\end{cases}
$$
so $\xi_i$ maps $(-\delta, \lambda_i+\delta)$ to $[0,\lambda_i]$, $(-\delta,\delta)$ to $0$, $(\lambda_i-\delta,\lambda_i+\delta)$ to $\lambda_i$ and if $s\in (-\delta, \lambda_i+\delta)$, then $|\xi_i(s)-s|<2\delta$.

We define a $C^\infty$-smooth map $\rho_\eps:\bbbr^E\to\bbbr^E$ by
$\rho_\eps(t_1,\ldots,t_E)=(\xi_1(t_1),\ldots,\xi_E(t_E))$.

We will show now that the mapping $\rho_\eps$ maps $V_\delta$ onto $w(T)$. 

Recall that if $s=(s_1,\ldots,s_E)\in w(\eta_i)$, then $s_i\in[0,\lambda_i]$ can be any number, while $s_j=\lambda_{ij}\in \{0,\lambda_j\}$ for $j\neq i$.

If $t\in V_\delta$, then
$t$ is in a $\delta$-neighborhood of one of the edges $w(\eta_i)$.  Hence
$$
\begin{cases}
t_i\in (-\delta,\lambda_i+\delta), & \\
t_j\in (\lambda_{ij}-\delta,\lambda_{ij}+\delta) & \text{for } j\neq i.
\end{cases}
$$
Therefore, $\xi_i(t_i)\in[0,\lambda_i]$ and $\xi_j(t_j)=\lambda_{ij}$ for $j\neq i$, so
$\rho_\eps(t)\in w(\eta_i)$. Since $\xi_i$ maps $(-\delta,\lambda_i+\delta)$ onto $[0,\lambda_i]$, it follows that $\rho_\eps(V_\delta)=w(T)$.

Also,  
\begin{equation}
\label{T31}
|\rho_\eps(t)-t|=\sqrt{\sum_{k=1}^E |\xi_k(t_k)-t_k|^2}\leq 2\sqrt{E}\delta
\quad
\text{for $t\in V_\delta$.}
\end{equation}
Essentially, $\rho_\eps$ acts on $V_\delta$  as the nearest point projection along the edges of $w(T)$, but then collapses a neighborhood of every vertex of $w(T)$ to that vertex.

\noindent
{\bf Approximation and conclusion of the proof.}

Recall that the retraction $r:Z_f\to T$ is $\eps$-close to the identity on $\psi(\overbar{\Omega}_\eps)$: for any $z\in \psi(\overbar{\Omega}_\eps)$ we have $d_f(r(z),z)<\eps$. Thus, by the fact that $\phi$ is $1$-Lipschitz,  
$$
|(\phi\circ r\circ\psi)(x)-f(x)|=|(\phi\circ r\circ\psi)(x)-(\phi\circ\psi)(x)|<\eps
\quad
\text{for all } x\in\overbar{\Omega}_\eps.
$$

 We set $\delta=\min(\eps/(1+\sqrt{E}+2E), \lambda/4)$ and approximate (in a standard way) $g=w\circ r\circ \psi:\Omega\to w(T)\subset \bbbr^E$ with a smooth map $g_\eps:\Omega\to \bbbr^E$, so that $|g_\eps(x)-g(x)|<\delta$ for all $x\in\Omega$. Then $g_\eps(\Omega)$ lies in $V_\delta$, the $\delta$-neighborhood of $w(T)$, which is projected by $\rho_\eps$ back onto $w(T)$ and $|(\rho_\eps\circ g_\eps)(x)-g(x)|<(1+2\sqrt{E})\delta$ by \eqref{T31}.

Since $w^{-1}:w(T)\to T$ is $\sqrt{E}$-Lipschitz and $\phi$ is $1$-Lipschitz,
$\phi\circ w^{-1}:w(T)\to\bbbr^m$ is $\sqrt{E}$-Lipschitz, and we extend it to a Lipschitz map $\tilde{\phi}:\bbbr^E\to\bbbr^m$ (by Lemma~\ref{T19}). Then, we approximate it with a smooth $\phi_\eps:\bbbr^E\to\bbbr^m$, $|\phi_\eps(z)-\tilde{\phi}(z)|<\delta$ for all $z\in \bbbr^E$. Finally, setting $f_\eps=\phi_\eps\circ \rho_\eps\circ g_\eps$, we have for all $x\in \overbar{\Omega}_\eps$
\begin{equation*}
\begin{split}
|f_\eps&(x)-f(x)|=|(\phi_\eps\circ \rho_\eps\circ g_\eps)(x)-f(x)|\\
&\leq|\phi_\eps (\rho_\eps(g_\eps (x)))-\tilde{\phi}(\rho_\eps(g_\eps(x)))|+
|\tilde{\phi}(\rho_\eps(g_\eps(x)))-\tilde{\phi}(g(x))|+|(\tilde{\phi}(g(x))-f(x)|\\
&=|\phi_\eps (\rho_\eps(g_\eps (x)))-\tilde{\phi}(\rho_\eps(g_\eps(x)))|+
|(\phi\circ w^{-1})(\rho_\eps(g_\eps(x)))-(\phi\circ w^{-1})(g(x))|\\
&\qquad \qquad+|(\phi\circ r \circ\psi)(x)-f(x)|\\
& < \delta+\sqrt{E}|(\rho_\eps\circ g_\eps)(x)-g(x)|+\eps\leq  (1+\sqrt{E}+2E)\delta +\eps=2\eps,
\end{split}
\end{equation*}
because $\rho_\eps(g(x))$ and $g(x)$ lie in $w(T)$, where $\tilde{\phi}=\phi\circ w^{-1}$. \\ 
Also, ${(\phi\circ w^{-1})(g(x))=(\phi\circ w^{-1}\circ w\circ r\circ \psi)(x)=(\phi\circ r\circ\psi)(x)}$.

Since for any given compact $K\subset \Omega$ we have $K\subset \Omega_\eps$ for all $\eps\in(0,\eps_K)$, this proves that $f_\eps$ converge uniformly to $f$ on $K$.

It remains to prove that $\rank Df_\eps\leq 1$ everywhere. This, however, follows from the chain rule and the fact that $\rank D\rho_\eps\leq 1$ for all $z\in V_\delta$, because $\rho_\eps(V_\delta)=w(T)$ is 1-dimensional.
\hfill $\Box$

\subsection*{Data Availability}
Data sharing not applicable to this article as no datasets were generated or analysed
during the current study.
\subsection*{Additional Information}
Competing interests: The authors declare no competing financial interests.

\end{document}